\let\uml\"
\title[Dancing polygons,  rolling balls, and the Cartan-Engel\ldots]{Dancing polygons,  rolling balls and the Cartan-Engel distribution} 
\author{Gil Bor}  
\address{CIMAT, A.P. 402, Guanajuato, Gto. 36000, Mexico} 
\email{gil@cimat.mx}  
\author{Luis Hern\'andez Lamoneda}
\address{CIMAT, A.P. 402, Guanajuato, Gto. 36000, Mexico} 
\email{lamoneda@cimat.mx}
\thanks{We thank Robert Bryant for informative correspondence and to  Travis Wilse for reading an initial draft and making useful suggestions. We acknowledge support from CONACYT Grant A1-
S-45886.} 
\keywords{(2,3,5)-distribution; simple group $G_2$; projective polygon pairs; rolling distribution.}
\subjclass[2010]{58A30;53A20; 53A40; 53A55}
\newcommand{\s}{\scriptsize}
\newcommand{\fs}{\footnotesize}
\newcommand{\ttf}{$(2,3,5)$}
\newcommand{\CED}{Cartan-Engel distribution}
\newcommand{\dd}{\vspace{2px}\s\tt}
\newcommand{\Qd}{Q^{\small\tt dan}}
\newcommand{\Dd}{\D^{\small\tt dan}}
\newcommand{\roll}{{\small\tt roll}}
\newcommand{\Qr}{Q^\roll}
\newcommand{\tQ}{{\widetilde{Q}}}
\newcommand{\tQr}{{\tQ}^\roll}
\newcommand{\tDr}{\tD^{\small\tt roll}}
\newcommand{\Dr}{\D^{\small\tt roll}}
\newcommand{\Qo}{Q^{\small\tt oct}}
\newcommand{\Do}{\D^{\small\tt oct}}
\newcommand{\tD}{\widetilde{\D}}
\newcommand{\tQo}{{\tQ}^{\small\tt oct}}
\newcommand{\tDo}{\tD^{\small\tt oct}}
\newcommand{\Xr}{X^{\small\tt roll}}
\newcommand{\Xo}{X^{\small\tt oct}}
\newcommand{\rs}{r}
\newcommand{\rv}{r'}
\newcommand{\Ad}{{\rm Ad}}
\renewcommand{\v}{{\bf v}} 
\newcommand{\w}{{\bf w}} 
\renewcommand{\j}{{\bf j}} 
\newcommand{\x}{{\bf x}}
\newcommand{\bp}{{\bf p}}
 \newcommand{\bQ}{{\bf Q}}
\newcommand{\A}{{\bf A}}
\newcommand{\SL}{\rm{SL}}
\newcommand{\bo}{{\boldsymbol \omega}}
\newcommand{\D}{\mathscr{D}}
\newcommand{\N}{\mathbb{N}}
\newcommand{\Z}{\mathbb{Z}}
\newcommand{\R}{\mathbb{R}}
\renewcommand{\H}{\mathbb{H}}
\renewcommand{\P}{\mathbb{P}}
\newcommand{\tO}{\mathbb{O}}
\newcommand{\RP}{\mathbb{RP}}
\newcommand{\RPt}{\RP^2}
\newcommand{\RPts}{(\RPt)^*}
\newcommand{\Rt}{{\R^3}}
\newcommand{\Rts}{{(\Rt)^*}}
\newcommand{\Rtt}{{\R^{3,3}}}
\newcommand{\SO}{\mathrm{SO}}
\newcommand{\End}{\mathrm{End}}
\newcommand{\GL}{\mathrm{GL}}
\newcommand{\PGL}{\mathrm{PGL}}
\newcommand{\G}{\mathrm{G}_2}
\newcommand{\p}{{\bf b}}
\newcommand{\q}{{\bf A}}
\renewcommand{\b}{{\bf b}}
\newcommand{\e}{{\bf e}}
\renewcommand{\u}{{\bf w}}
\renewcommand{\k}{{\bf k}}
\renewcommand{\i}{{\bf i}}
\renewcommand{\j}{{\bf j}}
\renewcommand{\d}{{\rm d }}
\renewcommand{\Im}{{\rm Im }}
\renewcommand{\Re}{{\rm Re }}
\newcommand{\mn}{\medskip\noindent}
\newcommand{\st}{{\; |\;}}
\newcommand{\so}{\mathfrak{so}}
\renewcommand{\sl}{\mathfrak{sl}}
\newcommand{\g}{\mathfrak{g}}
\newcommand{\zm}{}
\newcommand{\m}{{\zeta}}
\renewcommand{\>}{{\rangle}}
\newcommand{\be}{\begin{equation}}
\newcommand{\ee}{\end{equation}}
\newtheorem{theorem}{Theorem}
\newtheorem{prop}{Proposition}
\newtheorem{lemma}{Lemma}
\newtheorem{cor}{Corollary}
\newtheorem{conj}{Conjecture}
\theoremstyle{definition}
\newtheorem{definition}{Definition}
\newtheorem{example}{Example}
\newtheorem{remark}{Remark}
\begin{document} 
 
\begin{abstract}  
 A pair of planar polygons is  `dancing' if one is inscribed in the other and they satisfy a certain cross-ratio relation at each vertex of the circumscribing polygon. Non-degenerate dancing pairs of closed $n$-gons exist for all $n\geq 6$. Dancing pairs correspond   to trajectories  of  a non-holonomic mechanical system, consisting of a ball rolling, without slipping and twisting, along a polygon drawn on the surface of a ball 3 times larger than the rolling ball. The correspondence stems from  reformulating  both systems as piecewise rigid curves  of  a certain remarkable rank 2 non-integrable distribution  defined on a 5-dimensional quadric in $\RP^6$, introduced  by \'E. Cartan and F. Engel in 1893 in order to define the simple Lie group $\G$.
\end{abstract} 
\maketitle
\tableofcontents


\section{Introduction and statement of results}
This article  connects  two seemingly unrelated themes:
(1)  dancing pairs of polygons in the real projective plane, and
(2)  spherical polygons with trivial rolling monodromy. 
 The connection  is established by relating each of these themes to a third one: (3) piecewise-rigid integral curves of the Cartan-Engel distribution. \\

\def\svgwidth{1\textwidth}
\begingroup%
  \makeatletter%
  \providecommand\color[2][]{%
    \errmessage{(Inkscape) Color is used for the text in Inkscape, but the package 'color.sty' is not loaded}%
    \renewcommand\color[2][]{}%
  }%
  \providecommand\transparent[1]{%
    \errmessage{(Inkscape) Transparency is used (non-zero) for the text in Inkscape, but the package 'transparent.sty' is not loaded}%
    \renewcommand\transparent[1]{}%
  }%
  \providecommand\rotatebox[2]{#2}%
  \newcommand*\fsize{\dimexpr\f@size pt\relax}%
  \newcommand*\lineheight[1]{\fontsize{\fsize}{#1\fsize}\selectfont}%
  \ifx\svgwidth\undefined%
    \setlength{\unitlength}{517.27093073bp}%
    \ifx\svgscale\undefined%
      \relax%
    \else%
      \setlength{\unitlength}{\unitlength * \real{\svgscale}}%
    \fi%
  \else%
    \setlength{\unitlength}{\svgwidth}%
  \fi%
  \global\let\svgwidth\undefined%
  \global\let\svgscale\undefined%
  \makeatother%
  \begin{picture}(1,0.41417855)%
    \lineheight{1}%
    \setlength\tabcolsep{0pt}%
    \put(0,0){\includegraphics[width=\unitlength,page=1]{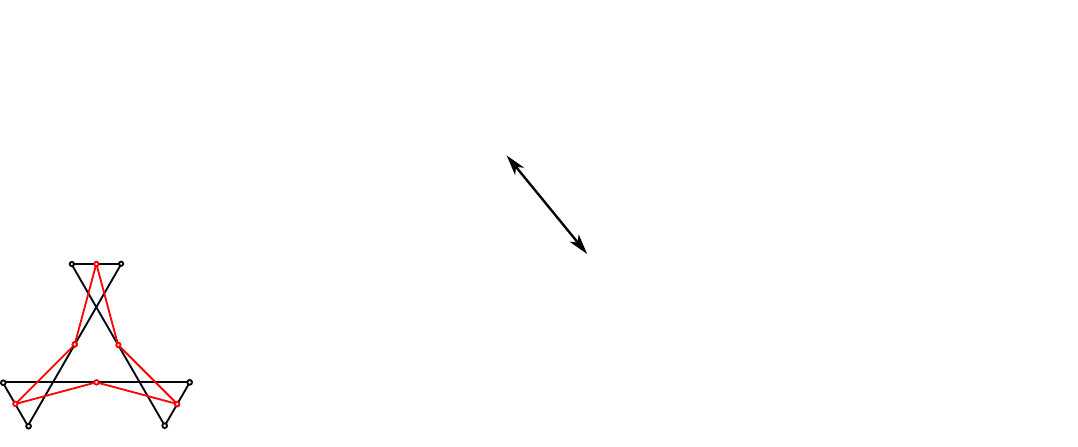}}%
    \put(0.30244902,0.08026885){\color[rgb]{0,0,0}\makebox(0,0)[t]{\lineheight{0}\smash{\begin{tabular}[t]{c}\dd Dancing pairs \\\dd of polygons\end{tabular}}}}%
    \put(0.62769557,0.08103605){\color[rgb]{0,0,0}\makebox(0,0)[t]{\lineheight{0}\smash{\begin{tabular}[t]{c}\dd Spherical polygons\\  \\\dd with trivial \\\dd rolling monodromy\end{tabular}}}}%
    \put(0.45611296,0.38657483){\color[rgb]{0,0,0}\makebox(0,0)[t]{\lineheight{0}\smash{\begin{tabular}[t]{c}\dd Piecewise-rigid\\ \dd integral curves  of the \\\dd Cartan-Engel distribution  \end{tabular}}}}%
    \put(0,0){\includegraphics[width=\unitlength,page=2]{diag2.pdf}}%
    \put(0.2945402,0.12905121){\color[rgb]{0,0,0}\makebox(0,0)[lt]{\lineheight{1.25}\smash{\begin{tabular}[t]{l}1\end{tabular}}}}%
    \put(0,0){\includegraphics[width=\unitlength,page=3]{diag2.pdf}}%
    \put(0.5642075,0.13097206){\color[rgb]{0,0,0}\makebox(0,0)[lt]{\lineheight{1.25}\smash{\begin{tabular}[t]{l}2\end{tabular}}}}%
    \put(0,0){\includegraphics[width=\unitlength,page=4]{diag2.pdf}}%
    \put(0.42904975,0.29190093){\color[rgb]{0,0,0}\makebox(0,0)[lt]{\lineheight{1.25}\smash{\begin{tabular}[t]{l}3\end{tabular}}}}%
    \put(-0.21868477,0.54678527){\color[rgb]{0.00784314,0,0}\transparent{0.99607801}\makebox(0,0)[lt]{\lineheight{1.25}\smash{\begin{tabular}[t]{l} \end{tabular}}}}%
    \put(0,0){\includegraphics[width=\unitlength,page=5]{diag2.pdf}}%
  \end{picture}%
\endgroup%

\bigskip

In this introductory  section we describe themes (1) and (2) and state the relation between them, see Theorem \ref{thm:main}. In the next section (Section 2) we describe theme (3) and   the relations $(1)\leftrightarrow (3)$ and $(2)\leftrightarrow(3)$. Themes (2) and (3) and their relation were previously  studied \cite{BrH,BM,BH}; theme  (1) and its relation to (3) is  new and can be thought of as a discrete version of  our previous work with P. Nurowski in \cite{BHN}.  The main new technical result of the paper, establishing the relation  $(1)\leftrightarrow (3)$, is Theorem \ref{thm:horpol} of Section 2 (proved in Section \ref{sec:horpol}).

In Section 3 we give proofs of the theorems stated in the first two sections. In Appendix A we give explicit formulas for the infinitesimal  action of the group $\G$ on the various configuration spaces appearing in this article
(this action is due to a well-known symmetry property of the Cartan-Engel distribution, see Section \ref{ss:oct} below). In Appendix B we give explicit coordinate formulas for the `rolling distribution' which models the rolling balls system of theme (2).


 In general, we tried to make the article as self-contained as possible, without assuming the reader's familiarity with any of the mentioned themes.

\subsection{Dancing pairs of  polygons} These are inscribed pairs of planar polygons,   satisfying a certain system of scalar equations, one equation  for each vertex of the circumscribing polygon, involving cross-ratios of neighboring vertices. The precise definition is as follows. 

\mn\begin{minipage}{.65\textwidth}
\hspace{1.5em}Consider a pair of  polygons in the projective plane $\RPt$, open or closed, the first with $n$ vertices $A_1, A_2, \ldots , A_n$ and the second with  $n$  edges $b_1, b_2, \ldots, b_{n}$, $n\geq 2$.  The second polygon is {\em inscribed} in the first  if each vertex $B_i:=b_ib_{i+1}$  (the intersection of $b_i$ with $b_{i+1}$) lies on the edge $a_i:=A_iA_{i+1}$  (the    line   through  $A_{i}$ and $A_{i+1}$). If the polygons  are open then $i=1,\ldots, n-1$ and  if they are closed  then $i=1,\ldots,n$ and indices   are considered mod $n$.  
\mn
\end{minipage}
\quad 
\begin{minipage}{.35\textwidth}
\centering
\def\svgwidth{.9\textwidth}
\begingroup%
  \makeatletter%
  \providecommand\color[2][]{%
    \errmessage{(Inkscape) Color is used for the text in Inkscape, but the package 'color.sty' is not loaded}%
    \renewcommand\color[2][]{}%
  }%
  \providecommand\transparent[1]{%
    \errmessage{(Inkscape) Transparency is used (non-zero) for the text in Inkscape, but the package 'transparent.sty' is not loaded}%
    \renewcommand\transparent[1]{}%
  }%
  \providecommand\rotatebox[2]{#2}%
  \newcommand*\fsize{\dimexpr\f@size pt\relax}%
  \newcommand*\lineheight[1]{\fontsize{\fsize}{#1\fsize}\selectfont}%
  \ifx\svgwidth\undefined%
    \setlength{\unitlength}{257.82377468bp}%
    \ifx\svgscale\undefined%
      \relax%
    \else%
      \setlength{\unitlength}{\unitlength * \real{\svgscale}}%
    \fi%
  \else%
    \setlength{\unitlength}{\svgwidth}%
  \fi%
  \global\let\svgwidth\undefined%
  \global\let\svgscale\undefined%
  \makeatother%
  \begin{picture}(1,0.73211396)%
    \lineheight{1}%
    \setlength\tabcolsep{0pt}%
    \put(0,0){\includegraphics[width=\unitlength,page=1]{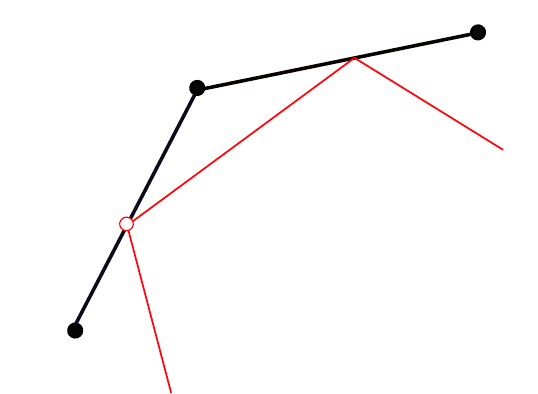}}%
    \put(0.3083034,0.13423565){\color[rgb]{0,0,0}\makebox(0,0)[lt]{\lineheight{0}\smash{\begin{tabular}[t]{l}\s$b_{i}$\end{tabular}}}}%
    \put(-0.00230533,0.07544795){\color[rgb]{0,0,0}\makebox(0,0)[lt]{\lineheight{0}\smash{\begin{tabular}[t]{l}\s$A_{i}$\end{tabular}}}}%
    \put(-0.1591986,0.3664076){\color[rgb]{0,0,0}\makebox(0,0)[lt]{\lineheight{0}\smash{\begin{tabular}[t]{l} \end{tabular}}}}%
    \put(0.26801973,0.61344829){\color[rgb]{0,0,0}\makebox(0,0)[lt]{\lineheight{0}\smash{\begin{tabular}[t]{l}\s$A_{i+1}$\end{tabular}}}}%
    \put(0.13459822,0.37029735){\color[rgb]{0,0,0}\makebox(0,0)[lt]{\lineheight{0}\smash{\begin{tabular}[t]{l}\s$B_{i}$\end{tabular}}}}%
    \put(0.58144899,0.67436542){\color[rgb]{0,0,0}\makebox(0,0)[lt]{\lineheight{0}\smash{\begin{tabular}[t]{l}\s$B_{i+1}$\end{tabular}}}}%
    \put(0.85016698,0.70942227){\color[rgb]{0,0,0}\makebox(0,0)[lt]{\lineheight{0}\smash{\begin{tabular}[t]{l}\s$A_{i+2}$\end{tabular}}}}%
    \put(0.70760359,0.46503281){\color[rgb]{0,0,0}\makebox(0,0)[lt]{\lineheight{0}\smash{\begin{tabular}[t]{l}\s$b_{i+2}$\end{tabular}}}}%
    \put(0.42773336,0.37891355){\color[rgb]{0,0,0}\makebox(0,0)[lt]{\lineheight{0}\smash{\begin{tabular}[t]{l}\s$b_{i+1}$\end{tabular}}}}%
    \put(0,0){\includegraphics[width=\unitlength,page=2]{inscribed.pdf}}%
  \end{picture}%
\endgroup%

\end{minipage}

\begin{definition}[Dancing pairs]\label{def:dp} An inscribed  pair of  polygons, as above,  is a  {\em dancing pair}   if 
\begin{equation}\label{eq:dp}
[A_{i+1}, B_{i}, A_{i},D]+[ A_{i+1},B_{i+1}, A_{i+2},C]=0, 
\end{equation}
where 
$
C:=b_i a_{i+1},$ 
$D:=a_ib_{i+2}$, $i=1,\ldots,n-2$ for open polygons and  $i=1,\ldots,n$  for closed polygons, in which case indices are  considered  mod $n$. See Figure \ref{fig:dp}.

Note that  for  $n=2$ any inscribed (open) pair is automatically dancing.

\end{definition}

\begin{figure}[h]
\centering
\def\svgwidth{1\textwidth}\import{figures/}{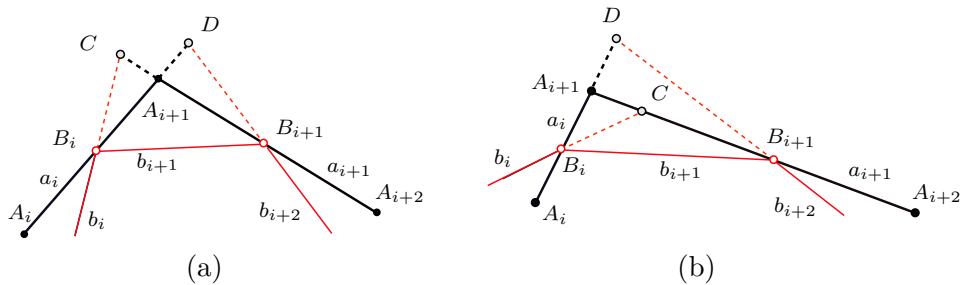}
\caption{The dancing condition, see Equation \eqref{eq:dp}. Actually, (a)  does not satisfy the dancing condition. Can you see why? See Remark  \ref{rem:dp}. }
\label{fig:dp}
\end{figure}

 The cross-ratio 
in Equation \eqref{eq:dp} is defined for 4 collinear points  in $\RPt$  by 
\be\label{eq:cp} [A_1, A_2, A_3, A_4]:= {(x_1-x_3 )(x_2-x_4)\over (x_1-x_4)(x_2-x_3 )},
\ee
 where $x_i$ are the coordinates of $A_i$ with respect   to some affine coordinate  along the containing  line. 
 
 As is well known, the cross-ratio of 4 colinear points in $\RPt$ is projectively invariant; that is,  invariant under the action  of the standard action of the projective group $\PGL_3(\R)$ on $\RPt$. 
 
\begin{remark}\label{rem:dp}
  Figure \ref{fig:dp}(a), although pleasantly symmetric  and depicts correctly 
 the definition of $C,D$  in Equation \eqref{eq:dp}, is  not of  a dancing pair, since  both summands on the left hand side of Equation \eqref{eq:dp} in this figure are positive. The reason is  that, as can be easily verified,  the cross-ratio $[A_1, A_2, A_3,A_4]$  is positive if and only if  $A_1,A_2$ does not `separate' $A_3,A_4$.  See Figure \ref{fig:scr}.\end{remark}
 
 \begin{figure}[h]\centering\def\svgwidth{.8\textwidth}\import{figures/}{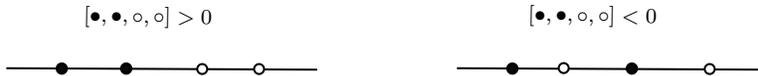}\caption{The sign of the cross-ratio.  }\label{fig:scr}\end{figure}

The term `dancing' in Definition \ref{def:dp} is taken from  \cite{BHN}. The present article can be thought of as a discrete version  of it. 

For Definition \ref{def:dp} to make sense, one needs to make some genericity assumptions on the pair of polygons. Let us spell them out:

\begin{definition}
A pair of inscribed polygons in $\RPt$, the first with vertices $A_1,\dots,A_n$ and  the second with edges $b_1, \ldots, b_n$,  is  {\em non-degenerate} if 
\begin{itemize}
\item Each  vertex $A_i$ of the first polygon does not lie on  the `opposite' side $b_i$ of the second polygon. 
\item Each three consecutive  vertices of the first polygon are non-collinear and each three consecutive edges of the second polygon are non-concurrent. 
\end{itemize}
\end{definition}

\paragraph{Existence of closed dancing pairs.} It is easy to produce examples of non-degenerate dancing pairs of {\em open} polygons for all $n\geq 2$. For example, one picks arbitrary (generic) $A_1,\ldots, A_n, b_1,$ then arbitrary (generic) $b_2$ incident to   $b_1( A_1A_2)$, after which $b_3,...., b_n$ are determined recursively by equation  \eqref{eq:dp} for $i=1,\ldots, n-2.$ This gives a total of $(2n+3)$-parameter family of open dancing pairs.  When the polygons are closed one needs to  add Equation \eqref{eq:dp} also for  $i=n-1, n$ and the equation $b_{n+1}=b_1$, reducing the number of parameters to $2n$. (One can also make this naive parameter count by considering that pairs of inscribed $n$-gons depend on $3n$ parameters and the $n$ dancing conditions reduce this to $2n$.)  

However,  by examining the signs of the summands in  Equation \eqref{eq:dp},  one can see easily that there are no dancing pairs of triangles. More generally, we have the following result, which will be proved in later  sections.  

\begin{theorem}\label{thm:intro1}There are non-degenerate dancing pairs of closed $n$-gons  if and only if  $n\geq 6$. 
\end{theorem}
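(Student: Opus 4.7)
The plan is to split the proof into the two directions of the ``if and only if.''

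\emph{Non-existence for $n \leq 5$.} I would push the sign argument sketched in Remark \ref{rem:dp}. Since the two summands in Equation \eqref{eq:dp} add to zero, they must have opposite signs (the non-degeneracy assumption ruling out vanishing). By the criterion illustrated in Figure \ref{fig:scr}, the sign of each cross-ratio $[A_{i+1}, B_i, A_i, D]$ is a purely combinatorial invariant of the cyclic order of the four collinear points on the projective line $a_i$. The dancing condition at each vertex $A_{i+1}$ therefore becomes a ``flip'' rule relating the combinatorial type on the line $a_i$ to that on the line $a_{i+1}$. For the triangle $(n=3)$ I would simply enumerate the finitely many combinatorial configurations of the $B_j$ and of the auxiliary points $C, D$ and check that no choice simultaneously satisfies the forced sign alternation at all three vertices. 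For $n = 4, 5$, the local flip alone will not suffice; I would look for a global $\Z/2$-valued (or signed integer) invariant attached to a closed dancing pair, whose total value around the polygon is prescribed on one hand by the dancing conditions and on the other by the topological closure of the polygon, and show the two prescriptions are incompatible precisely when $n \leq 5$.

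\emph{Existence for $n \geq 6$.} For the base case $n = 6$ I would impose a common cyclic $\Z/6$ symmetry on both polygons. This collapses the six dancing equations to a single scalar equation in a one-parameter family of symmetric inscribed hexagonal pairs, which can then be solved explicitly (and non-degeneracy verified a posteriori). For general $n \geq 7$ I would try two complementary strategies: (i) the same symmetric ansatz with $\Z/n$ symmetry, again reducing to one scalar equation in a one-parameter family; (ii) a deformation/implicit-function argument, using that closure cuts out the closed-dancing variety as a codimension-$3$ subvariety of the smooth $(2n+3)$-dimensional variety of open dancing pairs, to extend the hexagonal example by subdivision. Alternatively, the correspondence to be established in Theorem \ref{thm:main} reduces the problem to constructing a closed spherical $n$-gon with trivial rolling monodromy for the $3{:}1$ rolling ratio, which may yield a cleaner construction.

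The main obstacle I anticipate is the non-existence for $n = 4$ and $n = 5$: unlike the triangle, these do not seem ruled out by a purely local sign contradiction at a single vertex, and pinning down the correct global invariant---or, equivalently, translating the obstruction via the rolling correspondence into a statement about spherical polygons---is likely the crux of the argument.
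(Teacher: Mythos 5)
Your split into the two directions matches the paper's, and for existence when $n \geq 6$ you are essentially on the right track: the paper also reduces the problem to constructing closed spherical $n$-gons with trivial lifted rolling monodromy (Corollary \ref{cor:regularpolys} together with Theorem \ref{thm:main}), and the regular spherical $n$-gons constructed in Theorem \ref{thm:regularpolys} carry exactly the $\Z/n$ cyclic symmetry you propose to impose; the single scalar closure condition you anticipate is, in effect, Equation \eqref{eq:roll} in the variable $\phi$, which the paper solves by an intermediate-value argument on the spherical side rather than directly on inscribed hexagons.

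The genuine gap is the non-existence for $n = 4, 5$, and you correctly flag it as the crux. The sign obstruction does kill triangles (the paper says as much), but a $\Z/2$ ``flip rule'' does not propagate cleanly around the polygon: the two cross-ratios carried by a single edge $a_j$ appear in two different dancing equations and involve different auxiliary intersection points ($D = a_j b_{j+1}$ in the equation at $A_{j+1}$, versus $C = b_{j-1} a_j$ in the equation at $A_j$), so ``the sign on $a_j$'' is not a single well-defined datum that could accumulate a parity obstruction as you go around. The paper sidesteps the combinatorial route entirely. It first proves (Theorem \ref{thm:horpol}) that non-degenerate dancing pairs correspond bijectively to non-degenerate horizontal polygons in the quadric $\Qd \subset \Rtt$, whose edges are affine line segments tangent to $\Dd$. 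In that model the horizontality relation $\b_i - \b_j = \A_j \times \A_i$ reduces the $n \leq 5$ obstruction to explicit linear algebra: for $n=3$, expanding $(\A_1 - \A_3)\times(\A_2 - \A_3)$ with these relations yields zero, forcing $q_1, q_2, q_3$ to be collinear, hence degenerate; for $n = 4, 5$, one normalizes $q_2 = (\e_1, \e^1)$ by the transitive $\SL_3(\R)$-action on $\Qd$, parametrizes the remaining horizontal lines, and solves a small explicit system to force two vertices to coincide. To complete your argument you should abandon the speculative global invariant and carry out this computation in the $\Qd$ model.
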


Figure \ref{fig:hex}  shows an example of a non-degenerate dancing pair of closed hexagons. 
\begin{figure}[H]
\centering
\includegraphics[width=.3\textwidth]{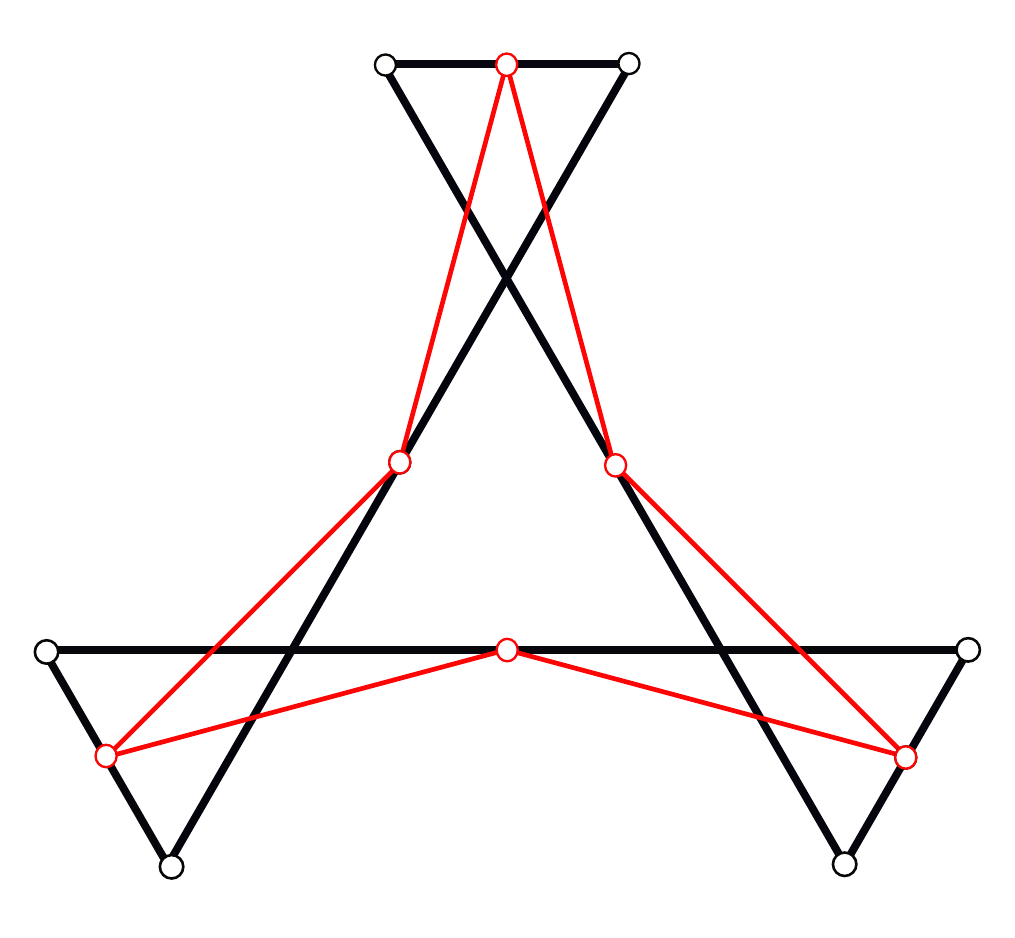}
\caption{A dancing pair of hexagons.}\label{fig:hex}
\end{figure}
\paragraph{Symmetries.} Clearly, by the projective invariance of the cross-ratio appearing in Definition \ref{def:dp}, the 8-dimensional projective group $\PGL_3(\R)$ acts on the space of closed dancing pairs, so one expects, for $n$ big enough,  a $(2n-8)$-parameter family of projective congruence classes (e.g., a 4-parameter family of non-trivial deformations of the example of Figure \ref{fig:hex}). A somewhat surprising construction in this paper   (see Section \ref{ss:oct}) is an effective (local) action of the  14-dimensional exceptional simple non-compact Lie group $\G$ on the space of closed dancing pairs, so one expects a $2n-14$ family of such congruence classes for $n\geq 8$. For  $n=6,7$ one expects a discrete family of $\G$-congruence classes. 

\begin{conj} For $n=6,7$, all closed dancing pairs of $n$-gons  are $\G$ congruent. 
\end{conj}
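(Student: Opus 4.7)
The plan is to use the $\G$-equivariant correspondence of Theorem \ref{thm:horpol} to translate the conjecture into a statement about $\G$-orbits on the moduli space $\mathcal{P}_n$ of closed piecewise-rigid integral $n$-gons of the Cartan-Engel distribution $\D$ on $Q$. Since $\dim \mathcal{P}_n = 2n$ and $\dim \G = 14$, the aim is to show that for $n=6,7$ the $\G$-action on $\mathcal{P}_n$ has a single orbit, or equivalently, granting infinitesimal transitivity, that $\mathcal{P}_n$ is connected.

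\emph{Step 1 (model polygon).} First, produce an explicit model closed dancing $n$-gon $P_0$ with maximal symmetry: for $n=6$ the dihedrally symmetric hexagon of Figure \ref{fig:hex} is the natural candidate, and for $n=7$ one looks for a cyclically symmetric configuration. Pass to the associated Cartan-Engel integral polygon $\gamma_0 \in \mathcal{P}_n$.

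\emph{Step 2 (local transitivity).} Using the explicit infinitesimal $\G$-action recalled in Appendix A, compute the subalgebra $\mathfrak{h}_0 \subset \g$ of infinitesimal symmetries tangent to $\gamma_0$ at every vertex and along every edge. The expected dimension is $\dim \mathfrak{h}_0 = 14 - 2n$, namely $2$ for $n=6$ and $0$ for $n=7$. Given this, the infinitesimal $\G$-action surjects onto $T_{\gamma_0}\mathcal{P}_n$, so the $\G$-orbit through $\gamma_0$ is open in $\mathcal{P}_n$.

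\emph{Step 3 (global argument).} Conclude by showing $\mathcal{P}_n$ is connected. A natural route goes through the rolling-balls realization (Theorem \ref{thm:main}): $\mathcal{P}_n$ is identified with the space of closed spherical $n$-gons (of fixed combinatorial type) on the large ball, with trivial rolling monodromy. The closure condition cuts $\mathcal{P}_n$ out of the free polygon space as a regular fibre of the monodromy map (away from degenerate configurations), and one tries to show this fibre is connected by deforming one edge length or turning angle at a time while correcting the remaining ones via the implicit function theorem.

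\emph{Main obstacle.} Step 2 is accessible though tedious, reducing, once a convenient normal form for $\gamma_0$ is chosen, to a finite-dimensional linear computation inside $\g$. The genuine hurdle is Step 3: a priori $\mathcal{P}_n$ could split into several components distinguished by discrete invariants such as the turning/winding numbers of the two polygons, the combinatorial alternation data of the two rulings of $\D$ along $\gamma_0$, or the sign pattern appearing in \eqref{eq:dp}. Settling the conjecture would require either classifying these components and realising them all inside the single open $\G$-orbit through $\gamma_0$, or constructing directly an explicit path in $\mathcal{P}_n$ from an arbitrary closed dancing pair to the model $P_0$.
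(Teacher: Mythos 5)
The statement you set out to prove is explicitly labeled a \emph{Conjecture} in the paper, and the paper offers no proof of it. The only supporting evidence the authors give is the dimension count immediately preceding it: the space of closed dancing $n$-gons has dimension $2n$, while $\dim\G=14$, so for $n=6,7$ one expects a discrete family of $\G$-congruence classes. There is therefore no ``paper's proof'' to compare against; any complete argument here would be new.

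Your outline is a sensible first attack and its arithmetic agrees with the authors' heuristic, but it is not a proof, and you correctly flag Step 3 as the obstruction. Two further sharpenings are worth recording. First, even if Steps 2 and 3 both succeeded exactly as stated, they would not suffice: an open orbit through $\gamma_0$ together with connectedness of $\mathcal{P}_n$ does not imply a single orbit, since the rank of the infinitesimal $\g_2$-action is only lower semicontinuous and may drop on a closed subset, leaving room for lower-dimensional orbits. You would need full infinitesimal transitivity at \emph{every} point of $\mathcal{P}_n$, or a separate argument excluding such strata. Second, for the connectedness argument via the rolling model, recall that the bijection of Theorem \ref{thm:main} is restricted to \emph{generic} pairs $([\Gamma],q)$ avoiding the hyperplane section $\Qo\setminus\Qo_*$, so the relevant moduli space is an open subset of a fibre of the monodromy map; removing that locus could itself disconnect the space, and your implicit-function-theorem path-following would have to steer around it. Both difficulties would need to be resolved, and neither looks routine; the conjecture remains open.
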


\subsection{Spherical polygons, rolling balls and monodromy} 

The second theme of this article is a well-known non-holonomic mechanical system, see  
\cite{Ag, AN, BM, BH, BrH}.  Consider a  `stationary' round sphere of radius $\rs$ in Euclidean $\R^3$, on which  a closed  oriented   polygonal path $\Gamma$ is drawn (its edges are arcs of great circles). We impose a {\em non-degeneracy} condition on $\Gamma$: no three consecutive vertices are `collinear', i.e., lie on the same great circle. Note that this implies that no two consecutive vertices are antipodal.

Next take another  sphere, a `moving' sphere, of radius $\rv$,  place it  outside the stationary sphere, touching it at one of the  vertices  of $\Gamma$, then roll it along $\Gamma$ without slipping  or twisting, see Figure \ref{fig:spheres}. (A formal definition of these terms will be given later in Section \ref{ss:rol}.)

\noindent\begin{minipage}{.65\textwidth}
\hspace{1.5em}As we roll the moving sphere  along $\Gamma$,  it rotates about its center. After going  once around $\Gamma$, the moving sphere returns to the initial vertex, but  possibly with a  different orientation, given by an element of  the orthogonal group $\SO_3$,
called the {\em rolling monodromy} of $\Gamma$. Put differently, as we roll the moving sphere along $\Gamma$, its rotation about its center defines a curve in  $\SO_3$, starting at the identity, whose other endpoint  is the rolling monodromy of $\Gamma$. 
The curve in $\SO_3$  can be lifted to a curve in the universal double-cover $S^3\to\SO_3$, starting at $1\in S^3$ (we are thinking of $S^3$ as the sphere of unit quaternions), whose  other end point is the {\em lifted rolling monodromy} of $\Gamma.$ 
\end{minipage}
\quad
\begin{minipage}{.35\textwidth}
\centering
\def\svgwidth{.8\textwidth}
\begingroup%
  \makeatletter%
  \providecommand\color[2][]{%
    \errmessage{(Inkscape) Color is used for the text in Inkscape, but the package 'color.sty' is not loaded}%
    \renewcommand\color[2][]{}%
  }%
  \providecommand\transparent[1]{%
    \errmessage{(Inkscape) Transparency is used (non-zero) for the text in Inkscape, but the package 'transparent.sty' is not loaded}%
    \renewcommand\transparent[1]{}%
  }%
  \providecommand\rotatebox[2]{#2}%
  \newcommand*\fsize{\dimexpr\f@size pt\relax}%
  \newcommand*\lineheight[1]{\fontsize{\fsize}{#1\fsize}\selectfont}%
  \ifx\svgwidth\undefined%
    \setlength{\unitlength}{227.15327454bp}%
    \ifx\svgscale\undefined%
      \relax%
    \else%
      \setlength{\unitlength}{\unitlength * \real{\svgscale}}%
    \fi%
  \else%
    \setlength{\unitlength}{\svgwidth}%
  \fi%
  \global\let\svgwidth\undefined%
  \global\let\svgscale\undefined%
  \makeatother%
  \begin{picture}(1,1.29976116)%
    \lineheight{1}%
    \setlength\tabcolsep{0pt}%
    \put(0,0){\includegraphics[width=\unitlength,page=1]{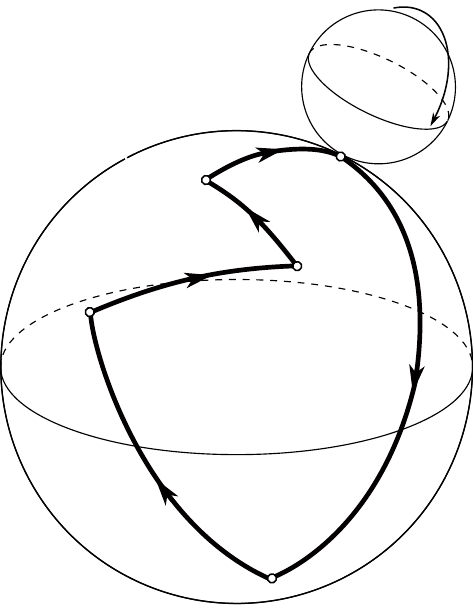}}%
    \put(0.22422927,0.23521517){\color[rgb]{0,0,0}\makebox(0,0)[lt]{\lineheight{1.25}\smash{\begin{tabular}[t]{l}$\Gamma$\end{tabular}}}}%
  \end{picture}%
\endgroup%

\captionof{figure}{\ }
\label{fig:spheres}
\end{minipage}

\begin{definition}
The rolling monodromy of $\Gamma$ is {\em trivial} if it is the identity element in $\SO_3$. Similarly, for the lifted monodromy. 
\end{definition}

 In other words, the rolling monodromy of $\Gamma$ is trivial if the associated curve in $\SO_3$ is {\em closed}, and the  lifted monodromy is trivial if the lifted curve in $S^3$ is closed, which is the  same as requiring the curve in $\SO_3$ to be closed and  {\em null homotopic}.

Clearly, if we pick another initial point on $\Gamma$ then the (lifted) rolling monodromy differs by conjugation by an element in $\SO_3$,  which does not affect its triviality.
 
 Note that the (lifted) rolling monodromy, and in particular its triviality, does depend on the radius ratio of the two spheres, $\rho:=\rs/\rv.$ 

\begin{example}\label{ex:eq} Let $\Gamma$ be the  equator of the stationary sphere (a horizontal  great circle). Rolling the moving sphere  once around  $\Gamma$  results in it being rotated  $\rho+1$  times  about the  vertical axis through its center (this is elementary, but rather counterintuitive; see this animation \cite{Geo}).  Lifted to $S^3$, we get a path going $(\rho+1)/2$ times around a great circle of $S^3$.  Thus the rolling monodromy of such a $\Gamma$ is trivial if and only if $\rho$ is an integer, and the lifted monodromy is trivial if and only if  $\rho$ is an {\em odd integer}. 
\end{example}

\noindent\begin{minipage}{.65\textwidth}
\begin{example}
(We used this   example  to produce Figure \ref{fig:hex}.)  Let $\Gamma$ be an `octant', i.e., an equilateral  spherical triangle, with side length a quarter of a great circle.  For $\rho=3$, each of its sides, by the previous example,  results in a lifted monodromy of $-1\in S^3$,  adding up to a total lifted  monodromy of $(-1)^3=-1.$ Thus rolling {\em twice} around $\Gamma$ results in a  trivial lifted  monodromy. 
\end{example}
\end{minipage}
\quad
\begin{minipage}{.35\textwidth}
\centering
\includegraphics[width=.8\textwidth]{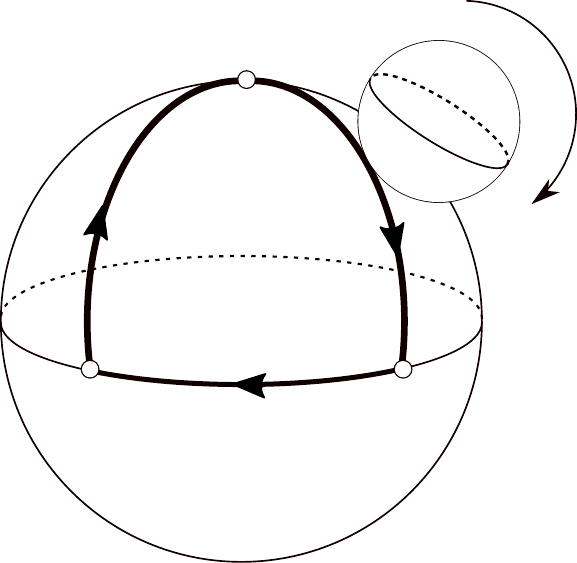}
\captionof{figure}{\  }
\label{fig:octant}
\end{minipage}

\mn 

We next present  an infinite family of examples of non-degenerate regular spherical $n$-gons with trivial lifted rolling monodromy, valid for $\rho=3$ and  all $n\geq 6$.

\paragraph{Regular spherical polygons with trivial (lifted) monodromy.} 
Consider  a closed non-degenerate regular spherical $n$-gon $\Gamma$, contained in the (open) northern hemisphere of the stationary sphere, and whose vertices lie on a circle of latitude of radius $\phi\in(0,\pi/2)$ (the `colatitude').

Let $w\in\N$  be the winding number  of  $\Gamma$ about the north pole. Note that by the non-degeneracy assumption, $\Gamma$ does not pass through the poles, so $w$ is well defined. The  `rotation  angle' between  two successive vertices is then $\theta:=2\pi w/n,$ where $0<\theta<\pi$, i.e., $0<w<n/2$. 
See Figure \ref{fig:reg}. 

\begin{figure}[h]
\centering
\def\svgwidth{.4\textwidth}\import{figures/}{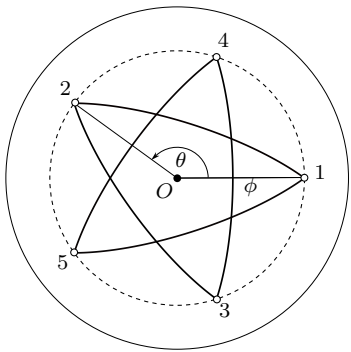}
\caption{A regular spherical pentagon, projected onto the $xy$ plane, with winding number $w=2$ about the north pole $O$ and rotation angle of $
\theta=4\pi/5$.
}\label{fig:reg}
\end{figure}

We ask: for which  $(n,w,\phi, \rho)$  is the (lifted) rolling monodromy of  $\Gamma$ trivial?  
We shall answer this question only  for $\rho=3$, the case that interests us here.

\begin {theorem}\label{thm:regularpolys}Let $\Gamma$ be a non-degenerate  regular spherical $n$-gon on the stationary sphere, with winding number $w$ about its center  and circumscribing circle of radius $\phi$. Furthermore, we assume that  $n\geq 3$, $0<w<n/2$ and $\phi\in(0, \pi/2)$, as described above. Then 
\begin{enumerate}[{\rm (a)}]
\item $\Gamma$ has  trivial rolling monodromy for $\rho=3$ if and only if there exists a (necessarily unique) integer $w'$  in the range  $w<w'<n$ such that 
\be\label{eq:roll}
\cos\left({\pi  w'\over  n}\right)=\cos\left({\pi  w\over  n}\right)
\left[1-4\sin^2\left({\pi  w\over n}\right)\sin^2\phi\right].
\ee


\item The  lifted rolling monodromy of such $\Gamma$  is trivial if and only if  $w'\equiv w$ {\rm (mod 2)}. 
\item $w'$ in Equation \eqref{eq:roll} is the winding number of the closed regular polygon traced on the moving sphere as it rolls along $\Gamma$.

\item There are solutions to Equation \eqref{eq:roll} with  $w'\equiv w$ {\rm (mod 2)}  if and only if  $n\geq 6$. In fact, there is a solution for $w=2, w'=4$ and all $n\geq 6.$ 
\end{enumerate}
\end {theorem}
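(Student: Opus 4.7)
The plan is to exploit the $n$-fold cyclic symmetry of $\Gamma$ to reduce the monodromy computation to a single rotation, first in $\SO_3$ and then in its double cover $S^3\subset\H$. Let $R := R_\theta \in \SO_3$ denote rotation by $\theta = 2\pi w/n$ about the polar axis $\hat z$, and let $M_k \in \SO_3$ be the rolling rotation (in the space frame) induced by rolling the small ball along the $k$-th edge of $\Gamma$. Space-rotation equivariance of the rolling system gives $M_k = R^{k-1} M_1 R^{-(k-1)}$, and telescoping the product $M_n \cdots M_1$ (together with $R^n = I$ in $\SO_3$) reduces it to $R^{-1}(M_1 R^{-1})^n R$, so the unlifted monodromy is trivial if and only if $M_1 R^{-1}$ is a rotation by some integer multiple $\psi = 2\pi w'/n$ of $2\pi/n$.

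To compute $\psi$ explicitly, I would use the classical fact that rolling without slipping or twisting along a great-circle arc of angular length $\ell$ on the unit stationary sphere rotates the small ball, in the space frame, by angle $(\rho+1)\ell = 4\ell$ about the axis $\hat n$ normal to the plane of that great circle. Elementary spherical trigonometry on $\Gamma$ yields $\sin(\ell/2)=\sin\phi\sin(\theta/2)$ and $\hat n\cdot\hat z = \sin\phi\cos(\theta/2)/\cos(\ell/2)$. Multiplying the unit quaternions representing $M_1$ and $R^{-1}$ and taking real parts produces $\cos(\psi/2) = \cos(2\ell)\cos(\theta/2) + \sin(2\ell)\sin(\theta/2)(\hat n\cdot\hat z)$; a somewhat miraculous simplification (substituting $\cos\ell = 1 - 2\sin^2\phi\sin^2(\theta/2)$ and expanding $\cos 2\ell$ and $\sin 2\ell/\cos(\ell/2)$) collapses this to
$$\cos(\psi/2) = \cos(\theta/2)\bigl[\,1 - 4\sin^2\phi\sin^2(\theta/2)\,\bigr],$$
which is precisely \eqref{eq:roll} with $\psi/2 = \pi w'/n$. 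Since the bracket is strictly less than $1$, monotonicity of $\cos$ on $[0,\pi]$ forces $w < w' < n$ and yields uniqueness of $w'$, proving (a).

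For (b), the same telescoping in $S^3$ is valid, since along each edge the angular velocity is constant and so the path in $\SO_3$ lifts canonically by exponentiation to $S^3$. This gives $\tilde M = \tilde R^{n-1}(\tilde M_1 \tilde R^{-1})^n\tilde R$, and de Moivre in $\H$ yields $\tilde R^n = \cos(\pi w)+\sin(\pi w)\hat z = (-1)^w$ and $(\tilde M_1\tilde R^{-1})^n = \cos(\pi w')+\sin(\pi w')\hat u = (-1)^{w'}$ (using the half-angles $\pi w/n$ and $\pi w'/n$, both in $(0,\pi)$). Hence $\tilde M = (-1)^{w+w'}$, proving (b). For (c), the same cyclic symmetry shows that the contact trace on the moving ball has vertices $P_k = U^k V_0$ with $U := M_1^{-1}R$; since $U^{-1}$ is conjugate to $M_1 R^{-1}$ it has rotation angle $\psi = 2\pi w'/n$, so iterating $n$ times advances by a total angle $2\pi w'$ about the axis of $U$, making the moving-ball trace a regular $n$-gon of winding number $w'$.

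For (d), I would study the range swept out by the right-hand side of \eqref{eq:roll} as $\phi$ varies in $(0,\pi/2)$: this is the open interval $\bigl(\cos(3\pi w/n),\cos(\pi w/n)\bigr)$, where the lower endpoint is identified via the product-to-sum identity $2\cos(\pi w/n)\cos(2\pi w/n) = \cos(\pi w/n)+\cos(3\pi w/n)$. Inverting $\cos$ on $[0,\pi]$ (and accounting for wrap-around when $3w > n$) forces $w' \in (w, \min(3w, 2n-3w))$. A direct finite check over $w \in \{1,\ldots,\lfloor(n-1)/2\rfloor\}$ with the parity constraint $w' \equiv w \pmod 2$ shows no admissible $w'$ exists for $n \leq 5$, while the pair $(w,w') = (2,4)$ is always admissible for $n \geq 6$ (and the corresponding $\sin^2\phi$, obtained by solving \eqref{eq:roll}, lies in $(0,1)$). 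The main technical obstacle I anticipate is the trigonometric collapse yielding the clean formula \eqref{eq:roll}; conceptually this reflects the hidden $\Gt$-symmetry of the rolling-$3{:}1$-balls system that underlies the whole paper.
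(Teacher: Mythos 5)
Your proposal follows essentially the same route as the paper: exploit the cyclic symmetry to telescope the monodromy product into a single power, lift to $S^3$, take the real part of the relevant unit quaternion, and use the spherical right-triangle relation $\sin(\ell/2)=\sin\phi\sin(\theta/2)$ together with $\hat n\cdot\hat z=\sin\phi\cos(\theta/2)/\cos(\ell/2)$ to collapse $\Re(\tilde M_1\tilde R^{-1})$ to the bracket in \eqref{eq:roll}. Parts (a)--(b) are, step for step, the paper's argument in slightly different notation ($g=q^n(\bar q g_0)^n$ versus your $\tilde R^{n-1}(\tilde M_1\tilde R^{-1})^n\tilde R$, which are the same product). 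The one place your route genuinely diverges is part (c): you derive $P_k=U^kV_0$ with $U=M_1^{-1}R$ and read off the winding number from the rotation angle of $U$, whereas the paper does a second round of spherical trigonometry (Napier's rules on the image triangle on the moving sphere); your group-theoretic argument is cleaner and automatically shows the trace is a regular polygon, though you should spell out the short computation $G_k^{-1}\v_k=(M_1^{-1}R)^k\v_0$ rather than assert it. For (d) your range bound $w'\in\bigl(w,\min(3w,\,2n-3w)\bigr)$ is a mildly more systematic version of the paper's direct check of the three triples $(4,1,3),(5,1,3),(5,2,4)$ (which in your language are exactly the boundary cases $w'=\min(3w,2n-3w)$, i.e.\ $\phi=\pi/2$). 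All correct; no gaps that matter.
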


See Section \ref{sec:reg} for a proof. Figure \ref{fig:regs} shows some examples with $\leq 10$  vertices.

\begin{figure}[H]
\centering
\begin{tabular}{ccccc}
\includegraphics[width=.175\textwidth]{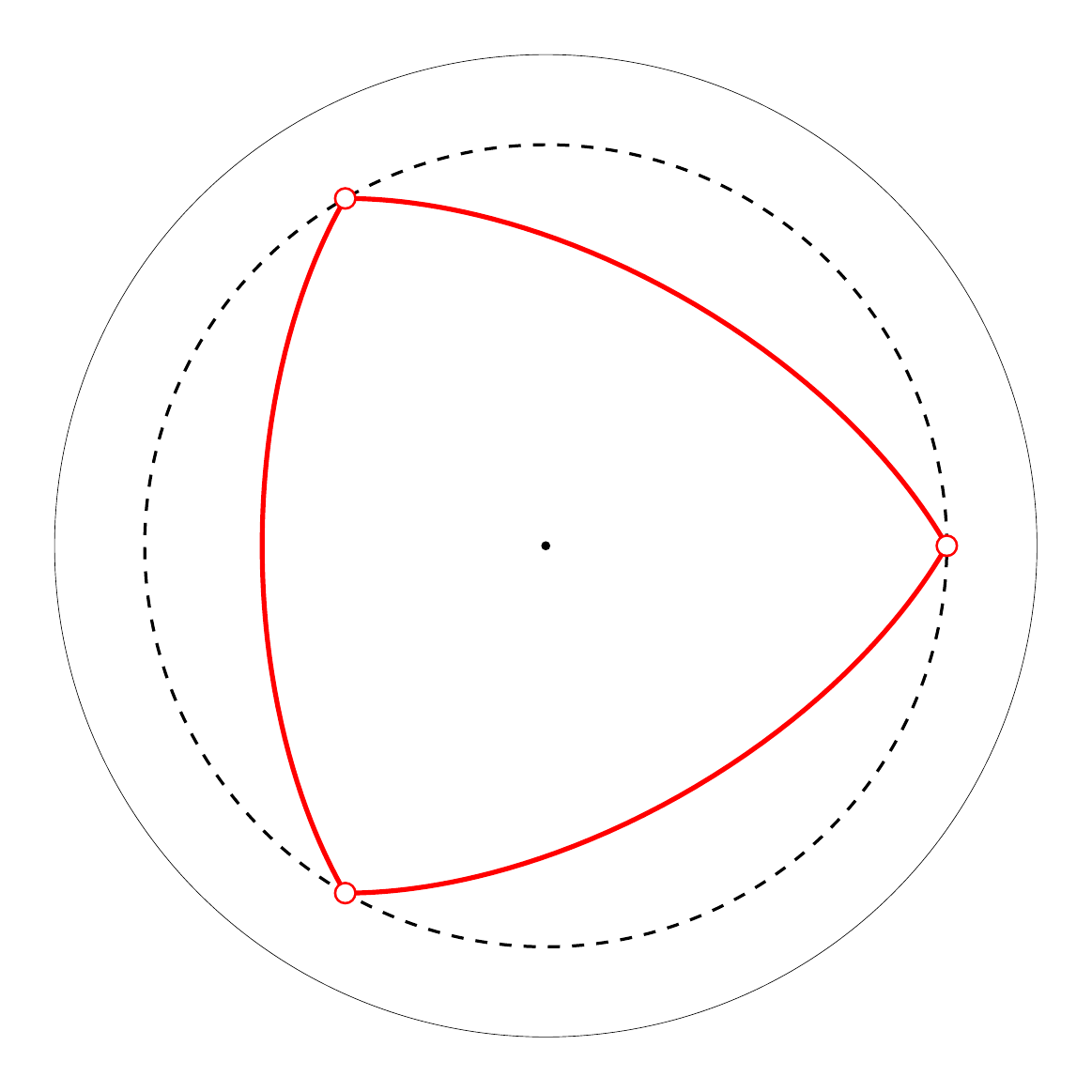}&
\includegraphics[width=.175\textwidth]{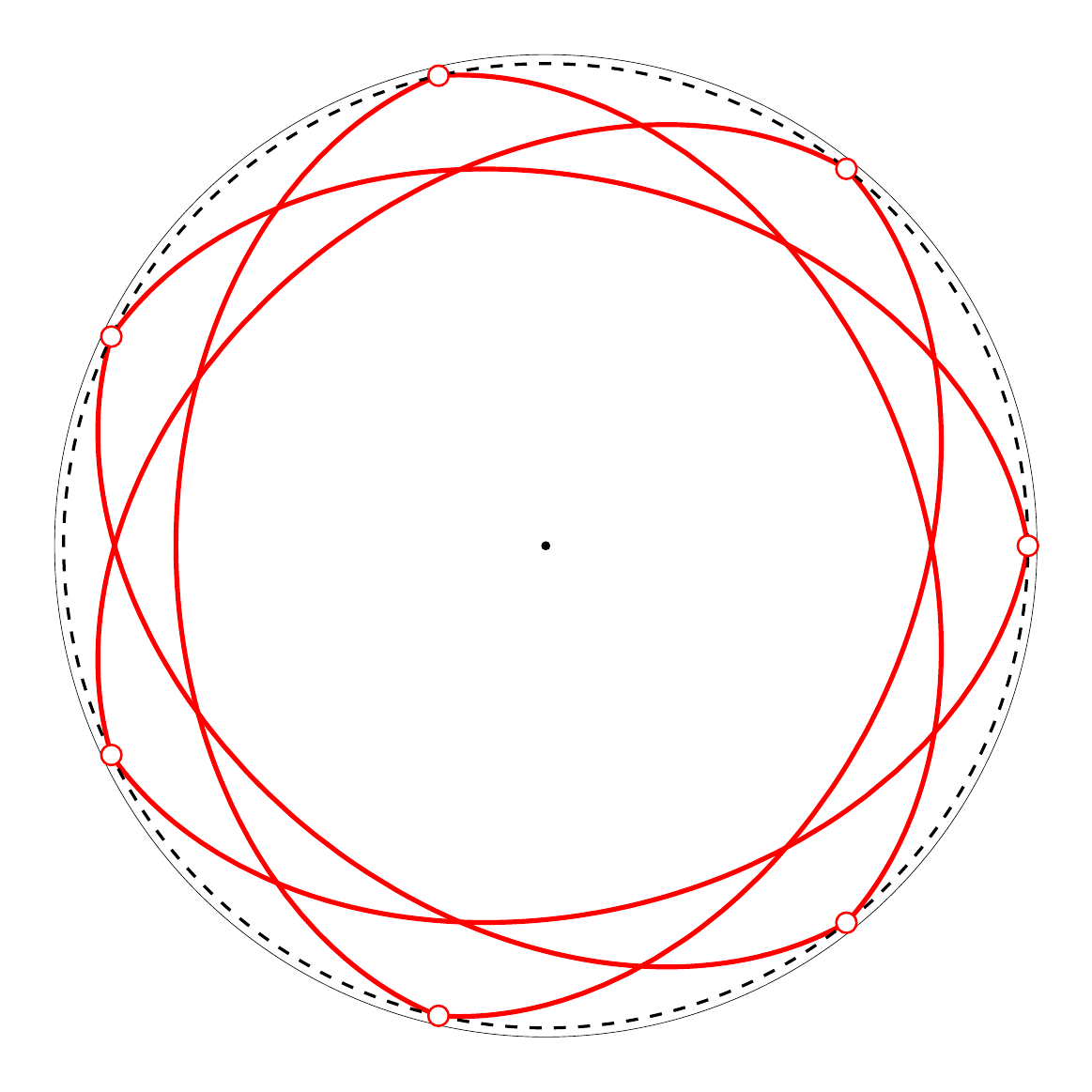}&
\includegraphics[width=.175\textwidth]{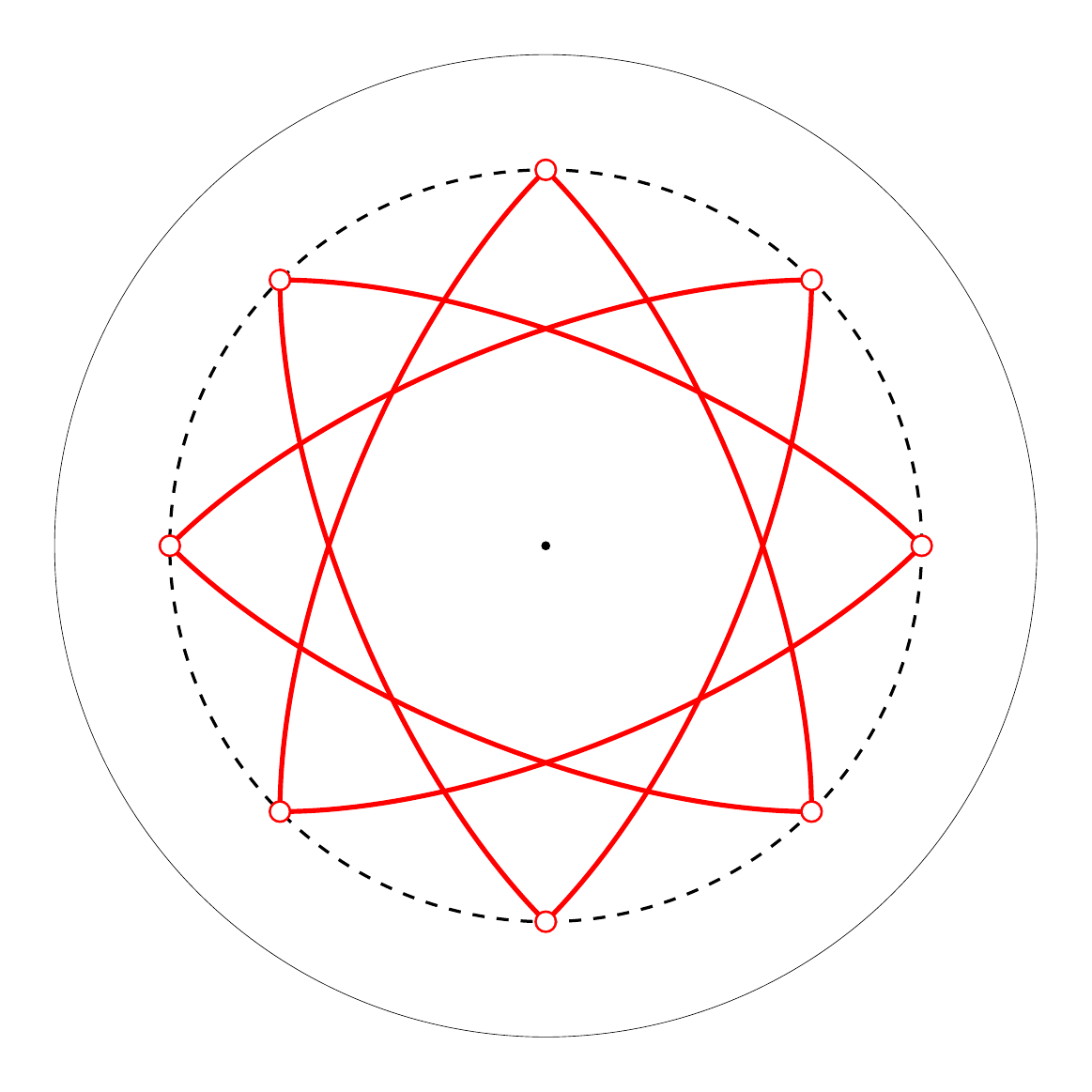}&
\includegraphics[width=.175\textwidth]{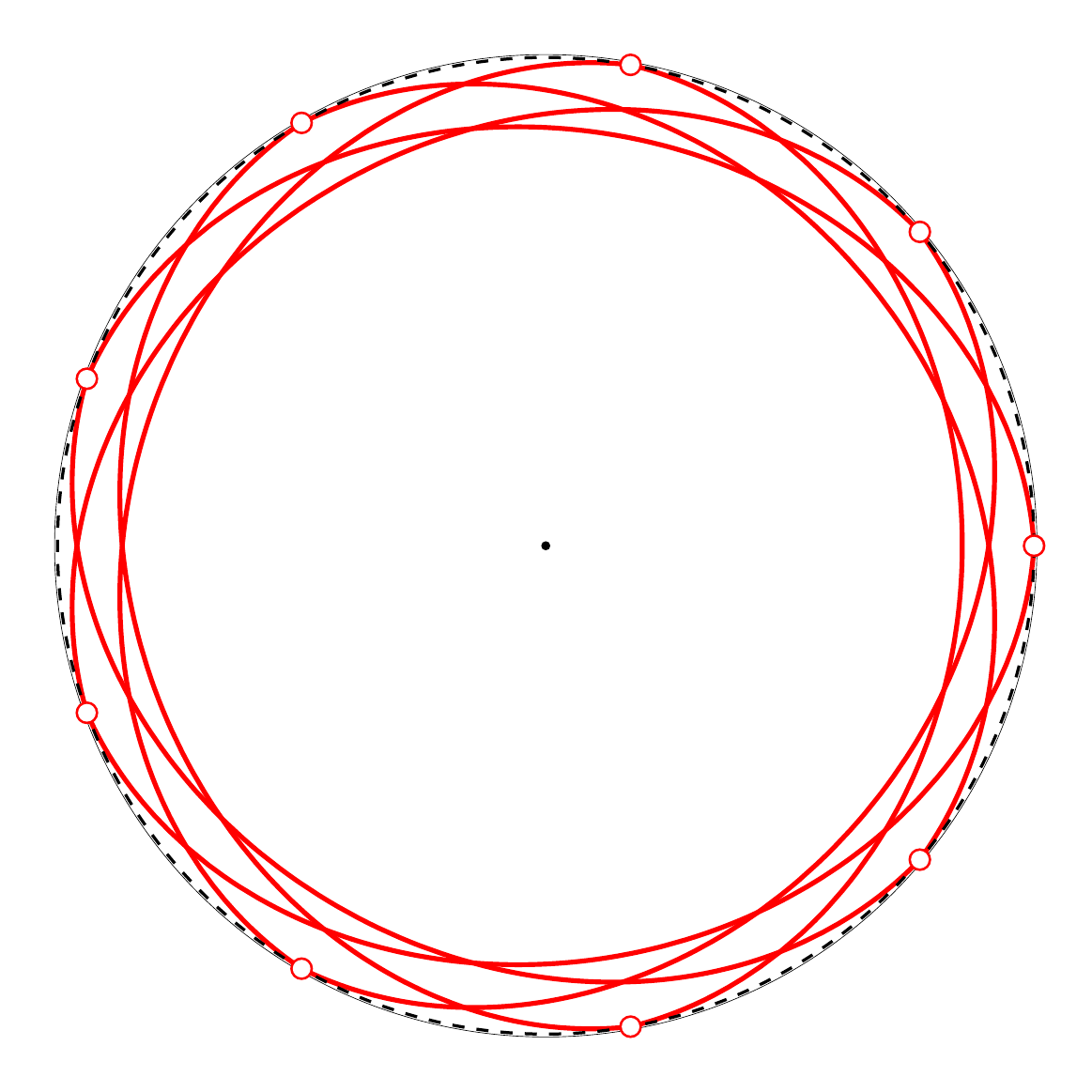}&
\includegraphics[width=.175\textwidth]{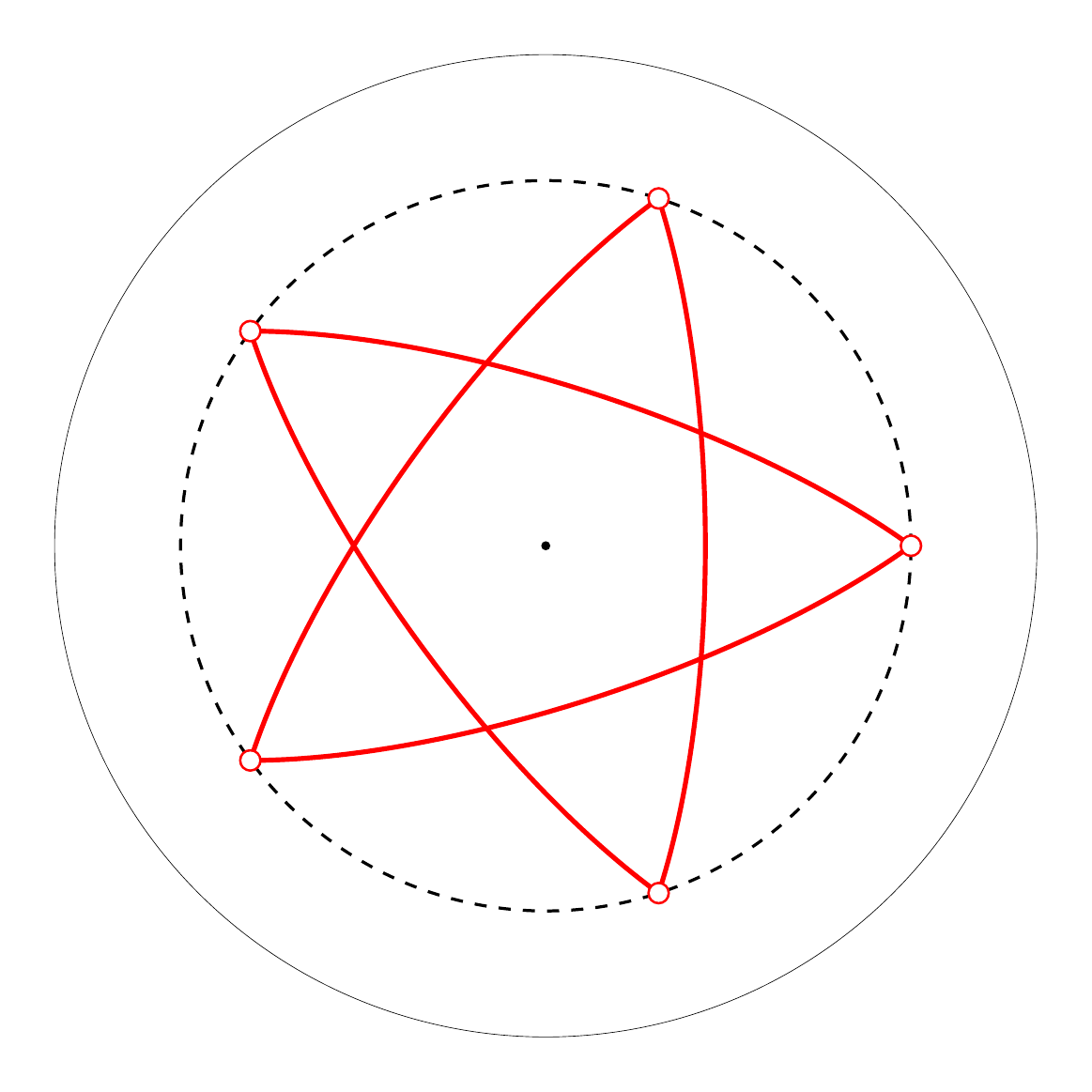}\\
$(6,2,4)$ & $(7,2,4)$  & $(8,3,5)$  & $(9,2,4)$  & $(10,4,6)$ \\
&&&& 
\end{tabular}
\caption{Regular spherical $n$-gon, $n=6,7,8,9,10$,  with trivial lifted rolling monodromy for $\rho=3$, projected to the $xy$ plane. The triple of numbers below each figure is $(n,w,w')$. }\label{fig:regs}
\end{figure}


\begin{cor}\label{cor:regularpolys}For each $n\geq 6$ there exists a closed non-degenerate spherical $n$-gon with lifted trivial monodromy for $\rho=3$.
\end{cor}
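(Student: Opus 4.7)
The plan is to read this corollary off directly from Theorem \ref{thm:regularpolys}, packaging its parts (a), (b), and (d). Fix $n \geq 6$. By part (d) there exist integers $w, w'$ with $0 < w < n/2 < w' < n$ and $w' \equiv w \pmod{2}$, together with a colatitude $\phi \in (0,\pi/2)$, satisfying Equation \eqref{eq:roll}. I would then take $\Gamma$ to be the regular spherical $n$-gon on the northern hemisphere with winding number $w$ and circumscribing circle of radius $\phi$.

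Before applying the remaining parts of the theorem, I would check that $\Gamma$ is non-degenerate, since that hypothesis is built into the setup of Theorem \ref{thm:regularpolys}. The constraints $\phi \in (0,\pi/2)$ and $0 < w < n/2$ place all $n$ vertices on a proper (non-great) circle of latitude in the open northern hemisphere, with consecutive vertices neither coincident nor antipodal; hence no three consecutive vertices can lie on a common great circle. With non-degeneracy in hand, part (a) of Theorem \ref{thm:regularpolys} yields triviality of the rolling monodromy of $\Gamma$ for $\rho=3$, and part (b) upgrades this to triviality of the \emph{lifted} rolling monodromy precisely because $w' \equiv w \pmod{2}$ was built in when $(w,w',\phi)$ was chosen.

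The genuine obstacle is thus not in the corollary itself but in part (d) of Theorem \ref{thm:regularpolys}, which this argument consumes as a black box. I would expect that part to reduce to an intermediate-value argument on Equation \eqref{eq:roll}: its right-hand side, viewed as a continuous function of $\sin^2\phi \in (0,1)$ with $w$ fixed, sweeps out an explicit interval with endpoints $\cos(\pi w/n)$ and $\cos(\pi w/n)[1 - 4\sin^2(\pi w/n)]$, and one checks that for $n \geq 6$ some admissible $w'$ with $w' \equiv w \pmod{2}$ has $\cos(\pi w'/n)$ in this interval, while for $n \leq 5$ parity and sign constraints obstruct every pair $(w,w')$. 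Once that is settled, the corollary follows with no additional work.
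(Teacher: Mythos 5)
Your argument is correct and matches the paper's intended (implicit) proof: the corollary is exactly the combination of parts (a), (b), and (d) of Theorem \ref{thm:regularpolys}, and your reading of part (d)'s intermediate-value argument also matches the paper's proof of that part. Your non-degeneracy check is a sensible addition (the cleanest phrasing: a great circle meets the latitude circle at colatitude $\phi<\pi/2$ in at most two points, and three consecutive vertices are distinct since $0<\theta<\pi$, so they cannot be concyclic with a great circle).
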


\begin{remark} Theorem \ref{thm:regularpolys} does not give an explicit list of  all regular polygons with trivial lifted monodromy for $\rho=3$, since it only reduces the question to Equation \eqref{eq:roll}, without fully  solving it. We will not dwell on solving this equation completely, since the solutions given in part (d) of Theorem \ref{thm:regularpolys} are sufficient for this   article. However,  some numerical experiments indicate the following\footnote{We thank Carlos Licea, an undergraduate physics student of the University of Guanajuato, for helping with these experiments.}: 
\begin{itemize}
\item  If $(n,w,w')$ is an admissible triple (i.e., one can find $\phi\in(0,\pi/2)$ that solves the equation, with $0<w<w'<n$, $w<n/2$ and $w\equiv w'$ mod 2) then so are all  $(n', w,w')$ with  $n'>n$. 

\item Let us say that an  admissible triple $(n,w,w')$ is {\em minimal} if $(n',w,w')$ is not admissible for $n'<n$. Then, for a fixed $n$,   the minimal admissible triples  and their number $m$ are as follows, with $k\geq 2$ and $j=1,2,\ldots, m$  in all 3 cases: 
\begin{itemize}
\item If $n=3k$ then $w=k+j-1, w'=n-3j+1$, $m=[k/2]$.
\item if $n=3k+1$ then $w=k+j, w'=n-3j -1$, $m=[(k-1)/2].$

\item if $n=3k+2$ then $w=k+j, w'=n-3j$, $m=[k/2].$

 \end{itemize}
\item In particular, there are minimal admissible triples  for  all $n\geq 6,$ except $n=7$. The minimal triples for $n\leq 12$ are  
 $ (6,2, 4 ),$ $  (8,3 , 5),$ $  (9, 3 , 7), $ $(10,4, 6),$ $ (11,4, 8), $ $(12, 4, 10),$ $ (12,5, 7).$
 \end{itemize}
\end{remark}

\subsection{The main theorem}
Our  main result  relates dancing pairs of closed polygons with spherical polygons  with lifted trivial rolling monodromy for radius ratio  $\rho=3$. What is special about $\rho=3$ (as well as $1/3$) will be explained in the next section, once we interpret sphere rolling in terms  of the Cartan-Engel distribution. For the moment, to state the relation, we need to add a definition.

 Note that  a non-degenerate  spherical polygon $\Gamma$ is not determined by its vertices: for each pair of successive  vertices there are infinitely many `edges'  (directed arcs of great circles) connecting them; two of which are `simple', i.e., non self-intersecting, complementary arcs of the great circle containing the two points, and the rest wrap around this circle an arbitrary number of times. 
 
 \begin{definition}\label{def:equiv}Two closed spherical polygons are {\em equivalent} if they have the same set of vertices,   and/or  some of the vertices are replaced by  their antipodes.
 \end{definition}
In other words, equivalence classes of  non-degenerate closed spherical $n$-gons 
 are given by non-degenerate ordered sets of $n$ points in  $S^2/\pm 1\cong \RP^2$. 
 
 Note that, by Example \ref{ex:eq},  triviality of the lifted rolling monodromy of $\Gamma$ for $\rho=3$ is preserved by this equivalence relation (see Lemma \ref{lemma:lifts} below for further details).

 The initial placement of the moving sphere at one of the vertices of $\Gamma$ is  given by an element of $\SO_3$, the orientation  of the moving sphere with respect to some fixed reference orientation. 
The lifted  path in $S^3$, describing the motion of the moving sphere along $\Gamma$, is thus determined by an arbitrary initial element $q\in S^3$   chosen `above' the  initial vertex in  $\Gamma$.


\begin{theorem}[main]\label{thm:main}There is a bijection between non-degenerate  dancing pairs of closed $n$-gons in $\RP^2$ and 
generic\footnote{The  precise meaning of `generic' will be given in the next section, once we describe the bijection in detail.} 
pairs $([\Gamma], q)$, where  $q\in S^3$ and $[\Gamma]$ is an equivalence class of non-degenerate closed spherical $n$-gons  with trivial lifted  rolling monodromy for $\rho=3$. 
\end{theorem}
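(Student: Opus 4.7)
The plan is to prove Theorem \ref{thm:main} by composing two bijections, each relating one of themes (1) and (2) to theme (3), the piecewise-rigid closed integral $n$-gons of the Cartan-Engel distribution $\D$ on the 5-dimensional quadric $Q\subset\RP^6$. The overall statement then follows by transitivity. This is exactly the structure advertised in the introduction: the dancing data and the rolling data are two different projections of a single integral polygon in $Q$.

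First, I would invoke the main technical result Theorem \ref{thm:horpol} (to be proved in Section \ref{sec:horpol}) to identify non-degenerate dancing pairs of closed $n$-gons in $\RPt$ with non-degenerate closed piecewise-rigid integral $n$-gons of $\D$; the two polygons in a dancing pair are read off as the images of the vertices of the integral polygon under the two natural projections from $Q$ to $\RPt$ (the double fibration whose two fibers encode the inscribed/circumscribing polygons). Second, I would use the known correspondence from \cite{BrH,BM,BH}, which will be recalled via the $\G$-equivariant identification of the rolling distribution with $\D$ in Section \ref{ss:oct}, to identify the same integral polygons with pairs $([\Gamma],q)$ consisting of an equivalence class of spherical $n$-gons on the radius-$\rs$ sphere and an initial lift $q\in S^3$ of the moving sphere's orientation.

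The delicate points will be the following. First, the $S^3$ parameter $q$ corresponds to the ``gauge'' freedom in choosing a lift of the initial sphere-vertex to the fiber of the projection $Q\to S^2$, so that once $q$ is fixed, the integral polygon in $Q$ is fully determined by $\Gamma$; closure of that integral polygon is then precisely triviality of the lifted rolling monodromy for $\rho=3$, which explains why $\rho=3$ is the distinguished value (it is the ratio for which the rolling distribution becomes $\G$-symmetric, see Section \ref{ss:oct}). Second, the equivalence relation from Definition \ref{def:equiv} is absorbed because $\RPt=S^2/\pm 1$ is visible only after passing from $S^2$ to the base of the $\D$-projection; replacing a vertex by its antipode changes $\Gamma$ but not the integral polygon in $Q$, nor the associated dancing pair. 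Third, the non-degeneracy hypotheses on both sides translate into matching genericity on the integral polygon (no two consecutive rigid segments collinear in $Q$), and this is what ``generic'' means for the spherical side in the theorem statement.

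The principal obstacle is Theorem \ref{thm:horpol} itself, whose proof is the technical heart of the paper and is postponed to Section \ref{sec:horpol}; given that theorem together with the rolling-balls interpretation of $\D$, the proof of Theorem \ref{thm:main} reduces to a careful bookkeeping match between the closure, non-degeneracy, and gauge data on the two sides of the double fibration. Concretely, the bijection sends $([\Gamma],q)$ to the dancing pair obtained by projecting, via the $\RPt$-side of the double fibration, the unique closed integral polygon of $\D$ that lifts $\Gamma$ starting from $q$; the inverse lifts a dancing pair to $Q$ using Theorem \ref{thm:horpol} and then projects to $S^2$ and $S^3$.
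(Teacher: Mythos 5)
Your proposal follows essentially the same route as the paper: compose the bijection of Theorem \ref{thm:horpol} (dancing pairs $\leftrightarrow$ non-degenerate horizontal $n$-gons in $\Qd$) with the distribution-preserving maps of Theorem \ref{thm:diag} (the chain $\Qd\hookrightarrow\Qo\leftarrow\tQo\xleftarrow{\Phi}\tQr\rightarrow\Qr$), then sort out the covers and closure conditions. You correctly identify the three delicate points --- $q\in S^3$ is the initial lift of the moving sphere's orientation, closure of the horizontal lift $\leftrightarrow$ trivial lifted rolling monodromy, and passage to the $\pm 1$ quotient explains the equivalence relation of Definition \ref{def:equiv} --- and your genericity remark matches the paper's definition of generic pairs $([\Gamma],q)$.

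What your sketch leaves implicit is precisely the content of the two lemmas the paper isolates before doing the ``bookkeeping.'' First, you assert that replacing a vertex $\v_i$ by its antipode does not change the associated integral polygon in $\Qo$; but the lift of $\Gamma$ to $\tQr$ \emph{does} change when $\v_i\mapsto -\v_i$, and one must check (a) that the edge rolling monodromies $\mu_i\in S^3$ are invariant under such replacements and under the choice of simple geodesic segment (the paper's Lemma \ref{lemma:lifts}: $\mu_i=e^{2\w\delta}$ comes out the same for all four choices of lift), and (b) that $\Phi$ is $\Z_2$-equivariant, $\Phi(-\v,q)=-\Phi(\v,q)$ in $\tQo$, so the resulting polygon in $\tQo$ changes only by an overall sign, which dies upon projecting to $\Qo$ (the paper's Lemma \ref{lemma:Phi}). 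Without these two facts the map $([\Gamma],q)\mapsto$ dancing pair is not obviously well-defined on equivalence classes. These are short computations, and your plan is otherwise sound, so this is a small but genuine gap in the well-definedness part of the argument rather than a wrong approach.
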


 For example, to the spherical octant of Figure \ref{fig:octant}, traversed twice, corresponds the dancing pair of hexagons of Figure \ref{fig:hex}.
Similarly, the regular spherical polygons with trivial lifted monodromy of Theorem \ref{thm:regularpolys} correspond to 
examples of dancing pairs of $n$-gons for all $n\geq 6$ (one half of Theorem \ref{thm:intro1}).

\section{The Cartan-Engel distribution}
The  relation between dancing pairs and rolling balls, as  indicated  in Theorem  \ref{thm:main},  is based on modeling both problems by the same remarkable geometric object:  a  certain non-integrable rank 2 distribution    on a 5-manifold,  introduced by Eli\'e Cartan and Friedrich Engel in 1893 (seemingly independently),  in  order to define the simple exceptional 14-dimensional Lie group $G_2$ \cite{Ca1, E}. The subject  has since been studied extensively by many authors (including ourselves,  see \cite{BM, BHN}). Here is a quick review of the basic properties relevant here. 

Let  $\D$ be a rank 2 distribution on a 5-manifold $Q$, i.e., a rank 2 subbundle of $TQ$. It is said to be a \ttf-distribution if  $[[\D,\D],\D]=TQ$; that is,  for any  local framing  $X_1,X_2$ of $\D$ (i.e., two everywhere independent local sections of $\D$), let $X_3:=[X_1,X_2], X_4:=[X_1,X_3],  X_5:=[X_2,X_3],$ then $X_1,\ldots, X_5$ is a local framing of $TQ$. 
In fact, a generic rank 2 distibution on a 5-manifold is \ttf\  but typically no two are diffeomorphic, even locally. A (local) symmetry of a distribution $\D$ on a manifold $Q$ is a (local) self-diffeomorphism of $Q$ preserving $\D$.  

In his fundamental paper on the subject \cite{Ca2} Cartan showed that the maximal dimension of the local symmetry group of a \ttf-distribution is 14, in which case the local symmetry group is the simple non-compact Lie group $\G$ and the distribution is called {\em flat}. Cartan also showed in \cite{Ca2} that all flat \ttf-distributions are locally diffeomorphic. 
Cartan and Engel gave in 1893 explicit formulas for such a distribution on $\R^5$ (their formulas in \cite{Ca1, E}  look  similar to our formula of Equation \eqref{eq:dist} below). We thus call a flat \ttf-distribution a {\em Cartan-Engel distribution}, or, by a slight abuse of terminology, {\em the} Cartan-Engel distribution. 

 Another result of Cartan in \cite{Ca2} is {\em Cartan's submaximal symmetry statement}:  the local symmetry group of a non-flat \ttf-distribution is at most 7-dimensional. 
 
 A much more recent  general  result on \ttf-distributions, by Bryant and Hsu \cite{BrH} (see also the last paragraph of \cite{Br}), is the existence of  {\em rigid} (or `abnormal') integral curves: any small enough segment of such a curve admits  no deformations  within the class of integral curves with the same end-points. In fact, the rigid curves of a \ttf-distribution $\D$  form (locally) a 5-parameter family  of $\D$-integral curves,  a unique such curve passing through  a given point in $Q$ in a given  direction at the point tangent to $\D$. 

We next describe briefly 3 models of the Cartan-Engel distribution that are used in  this paper and their interrelations. 

\subsection{ First model: dancing pairs}

Let $\Rtt:=\Rt\times \Rts$, a 6-dimensional real vector space equipped with the quadratic form $(\A, \b)\mapsto \b\A$ of signature $(3,3)$  (we are thinking of $\Rt$ as column  vectors and $\Rts$ as row vectors). 
We use  the standard  volume forms   $vol, vol^*$  on $\Rt,\Rts$ (respectively)  to define  `vector products' $\Rt\times\Rt\to \Rts$, $\Rts\times\Rts\to \Rt$ by 
\be\label{eq:prod}\A_1\times \A_2:=vol(\A_1, \A_2,\,  \cdot\, ) ,\ \  \b_1\times \b_2=vol^*(\b_1, \b_2,\, \cdot\, ).
\ee


Denote the projection $\Rt\setminus 0\to\RPt$  by $\A\mapsto [\A]$, and similarly for $\Rts$. That is, we are thinking of $\Rt$ and $\Rts$ as  homogeneous coordinates of points and lines in  $\RPt$ (respectively).  


Next, let  $\Qd\subset \Rtt$ be the 5-dimensional affine quadric 
$$\Qd:=\{(\q,\p)\st \p\q=1\}.$$ 
Clearly, the tangent space to $\Qd$ at a point $(\A,\b)$, translated to the origin, consists of vectors $(\dot \A, \dot\b)\in\Rtt$ satisfying
\be\label{eq:tq}
\dot\b\A+\b\dot\A=0
\ee
\begin{definition}\label{def:hor} Define a rank 2 distribution  $\Dd\subset T\Qd$ as follows:  the elements at  a point $(\A, \b)\in\Qd\subset\Rtt$,  translated to the origin in $\Rtt$, are the vectors $(\dot\A, \dot\b)\in \Rtt$  satisfying, in addition to Equation \eqref{eq:tq}, 
\be\label{eq:dist}
\dot\p =\q\times \dot \q.
\ee
\end{definition}

\begin{prop}\label{prop:hor}
$(\Qd,\Dd)$ is a  flat \ttf-distribution. 
\end{prop}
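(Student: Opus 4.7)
The statement decomposes into two independent assertions: that $\Dd$ has growth vector $(2,3,5)$, and that it is flat (equivalently, admits a $14$-dimensional local symmetry algebra). I would handle these separately.

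For the growth-vector part, introduce the ``horizontal lift'' $h \colon \R^3 \to T_{(\A,\b)}\Rtt$ by $h(\v) := (\v, \A \times \v)$. Any tangent vector of this form automatically satisfies (\ref{eq:dist}); and since $(\A\times\v)\A = vol(\A,\v,\A)=0$, the tangency condition (\ref{eq:tq}) reduces to the single linear constraint $\b\v = 0$. Hence $\Dd$ has rank $2$ and equals $h(\ker\b)$. The $\SL_3(\R)$-action $(\A,\b)\mapsto(g\A,\b g^{-1})$ preserves the quadric and the distribution, since $(g\A)\times(g\dot\A) = (\det g)(\A\times\dot\A)g^{-1}$, so $\det g = 1$ guarantees compatibility with (\ref{eq:dist}); moreover it is transitive on $\Qd$ (the stabilizer of $(e_1, e_1^*)$ is an $\SL_2$ sitting in the lower-right block). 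By transitivity, it suffices to verify bracket-growth at the base point $(e_1, e_1^*)$. There, with $\v_1 := (-b_2/b_1, 1, 0)$ and $\v_2 := (-b_3/b_1, 0, 1)$ a local frame for $\ker\b$ and $X_i := h(\v_i)$, a direct calculation in the coordinates $(a_1, a_2, a_3, b_2, b_3)$ yields $X_3 := [X_1, X_2]$, $X_4 := [X_1, X_3]$, $X_5 := [X_2, X_3]$ and shows that $X_1, \dots, X_5$ are linearly independent at the base point.

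For flatness, by Cartan's submaximal-symmetry theorem cited above, it suffices to exhibit $14$ linearly independent infinitesimal symmetries. The $\SL_3(\R)$-action above yields $8$. Six more should come from the $\Z$-grading $\g_2 = \Rts \oplus \sl_3 \oplus \Rt$: for each $\c \in \Rt$ and each $\e \in \Rts$ one writes down a polynomial vector field on $\Qd$ whose flow preserves $\Dd$ and verifies that the resulting $14$ vector fields satisfy the $\g_2$ commutation relations. An alternative (and probably cleaner) route is to identify $(\Qd, \Dd)$ with the projectivized null cone of the imaginary split octonions $\Im(\tO)$ equipped with its standard $\G$-invariant Cartan-Engel distribution, thereby importing the $\G$-action from a known realization; this is the viewpoint developed in Section \ref{ss:oct}.

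The main obstacle is producing the six ``non-linear'' symmetries beyond $\sl_3$ and verifying that they close to $\g_2$: the bracket-growth check and the $\SL_3$-action are routine, but the full $\g_2$-action requires either explicit polynomial generators or the octonionic setup. The octonion route bypasses this obstacle at the cost of invoking the split octonion algebra.
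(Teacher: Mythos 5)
The paper's flatness argument is much shorter than what you propose, and the difference is exactly a misuse of Cartan's submaximal-symmetry theorem. You write that ``it suffices to exhibit $14$ linearly independent infinitesimal symmetries'' and then identify the construction of the remaining $6$ beyond $\sl_3(\R)$ as ``the main obstacle.'' But there is no such obstacle: Cartan's theorem states that a \emph{non-flat} \ttf-distribution has local symmetry algebra of dimension at most $7$. Its contrapositive is the only thing needed here: as soon as you have an $8$-dimensional symmetry algebra, the distribution cannot be non-flat, hence it is flat. Your $\SL_3(\R)$-action already provides those $8$, so the proof ends there. The remaining $6$ vector fields (and the identification with $\g_2$) come for free afterward as a \emph{consequence} of flatness, not as an input to it. Constructing them explicitly -- whether via polynomial generators respecting the grading $\g_2=\Rts\oplus\sl_3\oplus\Rt$, or via the octonion embedding of Section \ref{ss:oct} -- is certainly of independent interest (and the paper does exactly the latter in Appendix \ref{app:emb}), but it is not needed to establish the proposition.

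Your treatment of the growth-vector part is fine in outline and slightly more explicit than the paper, which simply cites \cite{BHN} for this model. The horizontal-lift description $\Dd = h(\ker\b)$ with $h(\v)=(\v,\A\times\v)$, the rank-$2$ computation, the $\SL_3$-equivariance identity $(g\A)\times(g\dot\A)=(\det g)\,(\A\times\dot\A)g^{-1}$, and reduction to the base point $(e_1,e_1^*)$ by transitivity are all correct. You have not carried out the bracket computation $X_3,X_4,X_5$, but the plan is sound. One minor caution: your local frame $\v_1=(-b_2/b_1,1,0)$, $\v_2=(-b_3/b_1,0,1)$ requires $b_1\neq 0$, which is fine near the base point; you should also be clear that $b_1$ is eliminated from the coordinates using $\b\A=1$.

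In summary, the growth-vector check is a valid (if unverified) route; for flatness you should replace your plan by the one-line observation that $8>7$ rules out non-flatness, which is precisely the argument in the paper.
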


This appeared in \cite{BHN}. We recall the flatness argument: $(\Qd, \Dd)$ admits $\SL_3(\R)$ as an `obvious' symmetry group, the  restriction to $\Qd$ of the diagonal action on $\Rtt$, $(\A,\b)\mapsto (g\A,\b g^{-1}).$ This is an 8-dimensional group, hence by Cartan's submaximality result, the full symmetry group is in fact 14-dimensional. 

In \cite{BHN} is  given also  an  explicit local  action of $\G$ on $\Qd$. We recall this in Appendix \ref{app:emb}, by  embedding  $(\Qd, \Dd)$ in another model of the \CED, a homogeneous space of $\G$, constructed using split octonions, see Section \ref{sec:diag}.


Here is an  easy to verify extrinsic property of $\Dd$, not found in \cite{BHN}.

\begin{prop}\label{prop:hp}
For any  given point in $\Qd$ and a $\Dd$-direction at the point, the affine line   in $\Rtt$  passing through the given point and tangent to the given direction is contained in $\Qd$ and everywhere tangent to $\Dd$. In fact, these affine  lines are  exactly the rigid curves of $\Dd$ in the sense of \cite{BrH}. 
\end{prop}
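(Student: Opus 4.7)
My plan is to separate the statement into (i) the extrinsic assertions that each affine line lies in $\Qd$ and is $\Dd$-tangent everywhere, and (ii) the identification of these lines with the rigid curves of \cite{BrH}.

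For (i), parametrize the line as $\gamma(t)=(\A_0+t\dot\A,\,\b_0+t\dot\b)$, where the $\Dd$-condition gives $\dot\b=\A_0\times\dot\A$ and $\b_0\dot\A=0$. The identity
$$(\b_0+t\dot\b)(\A_0+t\dot\A)=1+t(\b_0\dot\A+\dot\b\A_0)+t^2\,\dot\b\dot\A$$
shows $\gamma(t)\in\Qd$: the linear coefficient vanishes by \eqref{eq:tq}, and the quadratic coefficient equals $vol(\A_0,\dot\A,\dot\A)=0$ by antisymmetry. A parallel one-line check, $(\A_0+t\dot\A)\times\dot\A=\A_0\times\dot\A+t(\dot\A\times\dot\A)=\dot\b$, shows $\dot\gamma$ remains in $\Dd$ throughout.

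For (ii), the affine lines form a 5-parameter family of $\Dd$-integral curves with a unique line through each point of $\Qd$ in each $\Dd$-direction, matching the Bryant--Hsu description of rigid curves. By the uniqueness half of Bryant--Hsu it therefore suffices to prove that the affine lines are themselves rigid (equivalently, abnormal), after which the two families automatically coincide. For this I would exploit the $\SL_3(\R)$-action $(\A,\b)\mapsto(g\A,\b g^{-1})$. A short calculation shows this action is transitive on $\Qd$ with stabilizer a copy of $\SL_2(\R)$; that $\SL_2(\R)$ acts transitively on the $\RP^1$ of $\Dd$-directions at the fixed point; and hence $\SL_3(\R)$ is transitive on the entire 5-parameter family of affine lines. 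Since rigid curves are intrinsically defined, their family is $\SL_3(\R)$-invariant, so it suffices to verify abnormality of one conveniently chosen representative line.

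The main obstacle is this last verification. One must produce a nowhere-zero Lagrange multiplier $\lambda(t)\in\Dd^\perp_{\gamma(t)}$ along the chosen representative and check the abnormal-extremal equation $d\lambda(\dot\gamma,\cdot)|_{\Dd}=0$. The natural candidate is the restriction of $\theta=\b\,d\A$ to $\Qd$, which indeed lies in $\Dd^\perp$ (since $\theta(\v,\A\times\v)=\b\v=0$ on $\Dd$), but a short computation gives $d\theta(\dot\gamma,X)=2\,vol(\A_0,\dot\A,\v)$ for $X=(\v,\A\times\v)\in\Dd$, which is generally nonzero. The fix is to add to $\theta$ a correction from the rank-3 bundle $\Dd^\perp$; I expect the correct combination will emerge from a short algebraic computation on a standard line and complete the verification.
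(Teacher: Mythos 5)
Your part (i) is a correct and complete verification of the first sentence. The paper itself labels the proposition ``easy to verify'' and offers no proof, so there is nothing there to compare against; your two-line computation is exactly what that label suggests.

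Part (ii) has a sound strategy but is not a proof. The reduction is right: by Bryant--Hsu uniqueness (one rigid curve through each point in each $\Dd$-direction), it is enough to show each affine line is rigid, and $\SL_3(\R)$-transitivity on the space of lines reduces this to a single representative. But the essential step --- producing a nonvanishing abnormal multiplier along that representative and checking the abnormal-extremal equation --- is where you stop. You observe, correctly, that the natural candidate $\theta=\b\,d\A\in\Dd^\perp$ fails ($d\theta(\dot\gamma,\cdot)|_{\Dd}\neq 0$), acknowledge a correction is needed, and then write ``I expect the correct combination will emerge from a short algebraic computation.'' That is a conjecture, not a verification, and it is precisely the substantive content of the second sentence of the proposition. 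Until the multiplier $\lambda(t)\in\Dd^\perp_{\gamma(t)}$ is exhibited and the extremal equation checked, the identification of the affine lines with the rigid curves remains unproved. I would also caution against leaning on ``the affine lines form an $\SL_3$-invariant $5$-parameter family, hence must be the rigid curves'': invariance of a family with the right parameter count does not by itself pin it down (one would need to rule out other invariant families), so the direct abnormality check you planned is genuinely needed.

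One small point of phrasing: the multiplier is a covector \emph{field} $\lambda(t)$ along the curve, not a fixed $1$-form, so the condition to verify is the abnormal-extremal ODE for the lifted curve $t\mapsto\lambda(t)$ in $\Dd^\perp\subset T^*\Qd$, not simply $d\lambda(\dot\gamma,\cdot)|_{\Dd}=0$ for some fixed $\lambda\in\Omega^1(\Qd)$; your wording slides between the two. Carrying out that ODE computation for, say, the line through $(\e_1,\e^1)$ in the direction $(\e_2,\e^3)$ would close the gap.
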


Now we come to the second main definition of this article (the first one was  Definition \ref{def:dp}). 

\begin{definition}[Horizontal polygons] \label{def:hp}A  {\em horizontal polygon} in $\Qd$  is a polygon in $\Rtt$ whose  edges  are $\Dd$-horizontal  lines in $\Qd$, as in Proposition \ref{prop:hp}. The polygon is {\em non-degenerate} if every 3 consecutive vertices are non-collinear. 
 \end{definition}

Our next  theorem gives a bijection between the objects defined in Definitions \ref{def:dp} and \ref{def:hp}. 

\begin{theorem}\label{thm:horpol}For every  non-degenerate horizontal $n$-gon in $\Qd$, closed or open,   with  vertices   $(\q_1, \p_1), (\q_2, \p_2),\ldots, (\q_n, \p_n)$,  $n\geq 2$,   the projected pair of polygons,  the first with vertices 
$A_{i}=[\q_i]\in \RPt$ and the second with edges $b_i=[\p_i]\in \RPts$, $i=1,\ldots, n$, is a non-degenerate  dancing pair. Conversely, every non-degenerate dancing pair of   polygons   lifts uniquely to a   non-degenerate  horizontal polygon in $\Qd$.
\end{theorem}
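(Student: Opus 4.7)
The plan is to reduce both horizontality and the dancing condition to algebraic identities among the scalar pairings $\p_i\q_j$, and then verify the equivalence directly. The first step is to spell out what horizontality of the edge from $(\q_i, \p_i)$ to $(\q_{i+1}, \p_{i+1})$ really means: combining Definition~\ref{def:hor} and Proposition~\ref{prop:hp}, a short calculation shows that such an edge is horizontal iff $\p_{i+1} = \p_i + \q_i\times\q_{i+1}$; equivalently, the pairing relations $\p_i\q_{i+1} = \p_{i+1}\q_i = 1$ hold and the proportionality constant in $\p_{i+1}-\p_i \propto \q_i\times\q_{i+1}$ is exactly $1$. These in turn determine $\p_i\q_{i+2}$ and $\p_{i+2}\q_i$ in terms of the volume $vol(\q_i, \q_{i+1}, \q_{i+2})$.

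For the forward direction I would project $A_i = [\q_i]$, $b_i = [\p_i]$. Inscription is immediate: the pairing relations give $\p_i(\q_i-\q_{i+1}) = \p_{i+1}(\q_i-\q_{i+1}) = 0$, so $B_i = [\p_i\times\p_{i+1}] = [\q_i - \q_{i+1}]$ lies on $a_i = [\q_i\times\q_{i+1}]$. For the dancing equation I would express the auxiliary points $C$ and $D$ as linear combinations of $\q_i, \q_{i+1}, \q_{i+2}$ whose coefficients are the pairings $\p_j\q_k$, substitute into the cross-ratio formula \eqref{eq:cp}, and verify equation \eqref{eq:dp} using the horizontality relations from the first step.

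For the converse, starting from a non-degenerate dancing pair $(A_i, b_i)$ I would pick arbitrary representatives $\tilde\q_i, \tilde\p_i$ and seek scalings $\mu_i$ so that $(\q_i, \p_i) := (\mu_i\tilde\q_i,\, \tilde\p_i/(\mu_i\tilde\p_i\tilde\q_i))$ lies in $\Qd$ and has horizontal edges. The inscription condition, rewritten via the classical identity
\[
(\tilde\q_i\times\tilde\q_{i+1})(\tilde\p_i\times\tilde\p_{i+1}) = (\tilde\p_i\tilde\q_i)(\tilde\p_{i+1}\tilde\q_{i+1}) - (\tilde\p_i\tilde\q_{i+1})(\tilde\p_{i+1}\tilde\q_i),
\]
forces the $2\times 2$ pairing matrix at each edge to be singular, which permits a recursive solution for the $\mu_i$'s achieving $\p_i\q_{i+1} = \p_{i+1}\q_i = 1$. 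The dancing equations then supply precisely the remaining scalar data needed to upgrade $\p_{i+1}-\p_i \propto \q_i\times\q_{i+1}$ to the equality $\p_{i+1}-\p_i = \q_i\times\q_{i+1}$; for closed polygons the cyclic system closes consistently thanks to the dancing condition at the wrap-around vertex, and uniqueness of the lift follows because over $\R$ any global rescaling preserving horizontality is trivial.

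The hard part is the cross-ratio bookkeeping: writing $C$ and $D$ in homogeneous coordinates and extracting the needed algebraic identity from the cross-ratio formula. Once the pairings are in hand, both implications become determinant manipulations, and non-degeneracy enters only to ensure that all the intersections and denominators appearing in the calculation are well-defined.
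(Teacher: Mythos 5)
Your proposal is essentially the paper's approach: reduce both horizontality and dancing to algebraic relations among the pairings $\p_i\q_j$, express $B_i$, $C$, $D$ in homogeneous coordinates as linear combinations of the $\q_i$, and verify by direct computation using $\p_{i+1}=\p_i+\q_i\times\q_{i+1}$. The forward direction is identical in substance. The paper organizes the converse as an induction built from the $n=2$ case (Lemma~\ref{lemma:2gons}: each edge has a unique horizontal lift, via a real cube root) and the $n=3$ case (Lemma~\ref{lemma:3gons}: the dancing condition at a vertex forces the two adjacent edge-lifts to agree there), which handles closure for closed polygons automatically by applying Lemma~\ref{lemma:3gons} cyclically at every vertex.

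Your global-rescaling version of the converse conceals two points that need to be made explicit. First, the dancing condition at vertex $i+1$ does not directly force the proportionality constant $c_i$ in $\p_{i+1}-\p_i = c_i\,\q_i\times\q_{i+1}$ to equal $1$; a short computation (parallel to the paper's, which obtains $\bigl(\lambda^{-1}-\lambda^2\bigr)(\q_1\times\q_2)\q_3 = 0$ in their mismatch variable $\lambda$) shows it forces $c_i = c_{i+1}$. Only after all $c_i$ are shown equal can the single remaining global scalar — the real cube root of Lemma~\ref{lemma:2gons} — be used to set them all to $1$, and this is also where uniqueness comes from. Second, for a closed polygon the recursive determination of the $\mu_i$ enforcing $\p_i\q_{i+1}=\p_{i+1}\q_i=1$ closes up only if the projective invariant $\prod_i\bigl(\p_{i+1}\q_i\bigr)$ equals $1$. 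Inscription (singularity of the $2\times 2$ pairing matrices) alone does not give this; it is the dancing conditions at \emph{all} vertices — not just the wrap-around one — that force it, which is precisely what the paper's cyclic application of Lemma~\ref{lemma:3gons} supplies. Neither is a fatal flaw in your strategy, but both steps are where the actual content lives and should not be waved through.
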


This is proved in  Section \ref{sec:horpol}. 

\newcommand{\Ss}{S^2_{\tt sta}}
\newcommand{\Sm}{S^2_{\tt mov}}

\subsection{Second model: rolling balls}\label{ss:rol}
The configuration space for rolling balls is $\Qr:=S^2\times \SO_3$. A point $(\v,g)\in \Qr$ 
represents a  placement of the moving ball  so that it touches the stationary sphere 
at the point $\rs \v$ and is rotated with respect to some fixed reference orientation by $g\in\SO_3$. Let $\varphi_{(\v,g)}:\Rt\to\Rt$ be the rigid motion $\x\mapsto g\x+(\rs+\rv)\v$. Then the placement of the moving ball 
is given by its image  under $\varphi_{(\v,g)}$. See Figure \ref{fig:roll}. 

\begin{figure}[h]
\centering
\def\svgwidth{.6\textwidth}
\begingroup%
  \makeatletter%
  \providecommand\color[2][]{%
    \errmessage{(Inkscape) Color is used for the text in Inkscape, but the package 'color.sty' is not loaded}%
    \renewcommand\color[2][]{}%
  }%
  \providecommand\transparent[1]{%
    \errmessage{(Inkscape) Transparency is used (non-zero) for the text in Inkscape, but the package 'transparent.sty' is not loaded}%
    \renewcommand\transparent[1]{}%
  }%
  \providecommand\rotatebox[2]{#2}%
  \newcommand*\fsize{\dimexpr\f@size pt\relax}%
  \newcommand*\lineheight[1]{\fontsize{\fsize}{#1\fsize}\selectfont}%
  \ifx\svgwidth\undefined%
    \setlength{\unitlength}{227.08049011bp}%
    \ifx\svgscale\undefined%
      \relax%
    \else%
      \setlength{\unitlength}{\unitlength * \real{\svgscale}}%
    \fi%
  \else%
    \setlength{\unitlength}{\svgwidth}%
  \fi%
  \global\let\svgwidth\undefined%
  \global\let\svgscale\undefined%
  \makeatother%
  \begin{picture}(1,0.73415157)%
    \lineheight{1}%
    \setlength\tabcolsep{0pt}%
    \put(0,0){\includegraphics[width=\unitlength,page=1]{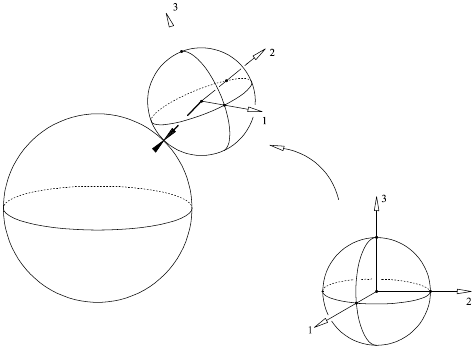}}%
    \put(0.23959089,0.38178586){\color[rgb]{0,0,0}\makebox(0,0)[lt]{\lineheight{1.25}\smash{\begin{tabular}[t]{l}\s$r\v$\end{tabular}}}}%
    \put(0.38365161,0.44496577){\color[rgb]{0,0,0}\makebox(0,0)[lt]{\lineheight{1.25}\smash{\begin{tabular}[t]{l}\s$-r'\v$\end{tabular}}}}%
    \put(0.67843561,0.42143121){\color[rgb]{0,0,0}\makebox(0,0)[lt]{\lineheight{1.25}\smash{\begin{tabular}[t]{l}\s$\varphi_{(\v, g)}$\end{tabular}}}}%
    \put(0,0){\includegraphics[width=\unitlength,page=2]{rolling.pdf}}%
  \end{picture}%
\endgroup%

\caption{The rolling configuration space $S^2\times\SO_3$.}\label{fig:roll}
\end{figure}


For a given element $g\in \SO_3$ and tangent vector $\dot g\in T_g\SO_3$ there is a unique  $\bo\in \Rt$, called the  {\em angular velocity} vector, such that $\dot gg^{-1}\in\so_3$ is given by 
$\v\mapsto\bo\times\v.$ 

\begin{definition}\label{def:rol}
Define $\Dr\subset T\Qr$ as follows. The elements of $\Dr$ at a  point $(\v, g)\in\Qr$ are the   tangent  vectors $(\dot\v, \dot g)\in T_{(\v,g)}\Qr$    satisfying 
\begin{enumerate}[(1)]
\item  $(\rho + 1)\dot\v=\bo \times  \v$,
\item $\bo\cdot\v = 0$.
\end{enumerate}
\end{definition}
See Proposition 1 of \cite{BM} for a derivation of conditions (1) and (2) from `no-slip' and `no-twist' (respectively).

\begin{prop}\hfill
\begin{enumerate}[{\rm (a)}]
\item $\Dr$ is a 2-distribution on $\Qr$, integrable for $\rho=1$ and \ttf\ for $\rho\neq  1$. 
\item For $\rho\neq 1$, the rigid curves of $\Dr$ correspond to rolling along great circles of the stationary sphere. Thus rolling along polygonal curves on the stationary sphere corresponds to piecewise rigid curves of $\Dr$. 
\item $\Dr$ is a flat \ttf-distribution  if and only if $\rho=3$ or $1/3$. 
\end{enumerate} 
\end{prop}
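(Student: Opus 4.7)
The plan is to verify the three parts using direct linear algebra and bracket computations, with part (c) being the genuinely delicate one.

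For part (a), at a point $(\v,g)\in\Qr$ I would identify $T_g\SO_3\cong\R^3$ via the angular velocity $\bo$, so that a tangent vector is a pair $(\dot\v,\bo)$ with $\dot\v\cdot\v=0$. Condition (2) cuts out a 2-plane in the $\bo$-space, and condition (1) then expresses $\dot\v$ linearly in $\bo\times\v$; together they define a rank 2 subbundle. To verify the \ttf\ condition for $\rho\neq 1$, I would pick a local framing $X_1,X_2$ of $\Dr$ (say with $\bo$ running over two orthonormal tangent vectors to $S^2$ at $\v$), compute $X_3=[X_1,X_2]$, $X_4=[X_1,X_3]$, $X_5=[X_2,X_3]$, and check they span $T\Qr$. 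For $\rho=1$ the same computation yields $X_3\in\mathrm{span}(X_1,X_2)$, making $\Dr$ Frobenius-integrable; more conceptually, when the two radii coincide there is an extra involution exchanging stationary and moving ball, and a corresponding first-integral pairing that trivializes the distribution.

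For part (b), the rigid curves of a \ttf-distribution are its abnormal extremals (Bryant-Hsu), characterized by the Pontryagin condition: a $\Dr$-integral curve is rigid iff there is a nonvanishing covector $\lambda$ along it, annihilating $\Dr$, with $\dot\lambda$ also in the annihilator. Substituting the defining 1-forms of $\Dr$ and unwinding the resulting ODE, the condition reduces to $\bo$ being parallel to a fixed vector throughout the motion. By condition (1), a constant axis $\bo$ forces $\v(t)$ to trace a great circle, with the moving ball undergoing steady rotation about this axis. The resulting one-parameter family of great-circle rolling motions is preserved under reparametrization, consistent with their rigid character, and conversely any abnormal extremal must have the constant-axis form by the preceding ODE.

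For part (c), I would compute Cartan's fundamental quartic invariant of $(\Qr,\Dr)$, whose identical vanishing characterizes flatness. The diagonal $\SO_3$ action on $S^2$ together with the left-multiplication $\SO_3$ action on $\SO_3$ preserves $\Dr$ and is transitive on $\Qr$, so the quartic is a single scalar expression in $\rho$. Its explicit extraction from the structure equations (or a computer algebra calculation using the coordinate formulas of Appendix B) shows that it vanishes exactly when $\rho\in\{3,1/3\}$, with the two values related by swapping the roles of the two spheres. Alternatively, for the `if' direction I would construct an explicit local diffeomorphism $(\Qr,\Dr)_{\rho=3}\to(\Qd,\Dd)$ using Theorem \ref{thm:main} together with Proposition \ref{prop:hor}, and handle $\rho=1/3$ by the swap symmetry; this bypasses the quartic entirely. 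The main obstacle is precisely this part (c): either route still requires nontrivial bookkeeping, and the cleanest proof likely proceeds through the split-octonion model of the \CED\ developed later in the paper, where the $\G$-action becomes manifest and the identification with $(\Qr,\Dr)_{\rho=3}$ reduces to matching equivariant data.
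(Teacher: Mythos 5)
The paper offers no proof of this proposition: it cites \cite{BrH} (\S4.4) for parts (a) and (b), and \cite{BM,BH} for part (c), remarking only that part (c) was learned in conversation with Bryant. So your sketch is an attempted proof where the paper supplies none; there is no ``paper proof'' to match against. On the substance, your outline of (a) and (b) is sound. In particular the claim in (b) that the abnormal-extremal condition forces $\bo$ to be a constant vector (hence great-circle rolling) checks out: for $\v(t)=(\cos\omega t,\sin\omega t,0)$, conditions (1) and (2) of Definition \ref{def:rol} pin down $\bo=(0,0,(\rho+1)\omega)$, a constant. A complete argument would still have to derive the constant-$\bo$ characterization from the Pontryagin/Bryant--Hsu system rather than assert it, but that is exactly the content of \cite{BrH}.

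For part (c), your two routes are not on equal footing, and the phrase ``this bypasses the quartic entirely'' overstates your second route. Mapping $(\tQr,\tDr)_{\rho=3}$ to the octonion model, and using the swap symmetry for $\rho=1/3$, proves only the ``if'' direction. To rule out every $\rho\notin\{1,3,1/3\}$ you still need a non-flatness certificate --- the Cartan quartic, or a symmetry/submaximality argument --- and your sketch does not supply that computation. Incidentally, the paper does in effect run your second route for the ``if'' half: the distribution-preserving diffeomorphism $\Phi:\tQr\to\tQo$ of Theorem \ref{thm:diag} is precisely a flatness certificate for $\rho=3$, with no reference to this proposition. For the ``only if'' half the paper, like your first route, defers to the Cartan tensor calculation in \cite{BM,BH}.
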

Parts (a) and (b) appeared in \S4.4 of \cite{BrH}. We learned about part (c)   from a  conversation with Robert Bryant. See \cite{BM, BH} for a proof of (c) as well as  attempts to explain the mysterious 3:1 radius ratio.

\subsection{Third model: split octonions}\label{ss:oct}
We use here the  notation of \cite[\S3]{BHN}. See also \cite{J,S} for more details. 

\paragraph{Split octonions.} This is  an $8$-dimensional non-commutative and non-associative real algebra $\tO$. Following Zorn \cite[page 144]{Z}, its  elements can be written  as `vector matrices'
$$\left(\begin{matrix}x&\q \\ \p&y\end{matrix}\right),\ \  x,y\in\R,\ \  \q\in\R^3, \ \  \p\in\Rts,$$
with `vector-matrix-multiplication', 
$$\left(\begin{matrix}x&\q \\ \p&y\end{matrix}\right)\zm\left(\begin{matrix}x'&\q' \\ \p'&y'\end{matrix}\right):=
\left(\begin{array}{lr}xx'-\p'\q& x\q'+ y'\q+ \p\times\p'\\ x'\p+ y\p'+ \q\times\q'&yy'-\p\q'\end{array}\right),$$
where, as before,  we use the `vector products' of formulas \eqref{eq:prod}. 

Conjugation in $\tO$ is given by 
$$\m=\left(\begin{matrix}x&\q \\ \p&y\end{matrix}\right)\mapsto \overline{\m}=\left(\begin{array}{rr}y&-\q \\ -\p&x\end{array}\right),$$
satisfying  
$$\overline{\overline{\m}}=\m, \ \  \overline{\m\zm\m'}=\overline{\m'}\zm\overline{\m},
\ \  \m\zm\overline{\m}=\<\m,\m\>\bf 1,$$
where $\bf 1=\left(\begin{smallmatrix}1&0 \\ 0&1\end{smallmatrix}\right)$ is the mutiplicative unit and
\be\label{eq:ip} \<\m,\m\>:= xy+\p\q
\ee
is a quadratic form  of signature $(4,4)$ on $\tO$. 

Define as usual $$\Re(\m)=(\m+\overline\m)/2, \ \  \Im(\m)=(\m-\overline\m)/2, $$
so that $$\tO=\Re(\tO)\oplus\Im(\tO),$$ where 
$\Re(\tO)=\R\bf 1$ and $\Im(\tO)$ are `traceless' vector matrices of the form $\left(\begin{smallmatrix}x&\q \\ \p&-x\end{smallmatrix}\right)$.

Let $\RP^6=\P(\Im(\tO))=\left(\Im(\tO)\setminus 0\right) / \R^*$ (projectivized imaginary split octonions) and $\Qo\subset \RP^6$ the projectivized null cone; that is, 
$$\Qo=\{[\m]\st \m\in\Im(\tO),\ \m\neq 0, \ \<\m, \m\>=0\}.$$

\begin{definition}$\Do\subset T\Qo$ is the distribution whose elements at a point $[\m]\in\Qo$ are given by the {\em annihilator} 
 $$\m^0=\{\m'\in \Im(\tO)\st \m\zm\m'=0\},$$
as follows.  For a non-zero null $\m\in\Im(\tO)$, $\m^0$ is a 3-dimensional subspace of $\Im(\tO)$, containing $\R\m$ and tangent to the null cone at $\m$, so descends to a projective 2-plane, tangent  to $\Qo$ at $[\m]$. Its tangent space at $[\m]$ is the fiber of $\Do$ at $[\m].$ 
\end{definition}

We have a projective analog of  Proposition \ref{prop:hp} and Definition \ref{def:hp}.

\begin{prop}\label{prop:hpp}
For any  given point in $\Qo$ and a $\Do$-direction at the point, the projective line   in $\RP^6$  passing through the given point and tangent to the given direction is contained in $\Qo$ and everywhere tangent to $\Do$. These are   the rigid curves of $\Do$. 
\end{prop}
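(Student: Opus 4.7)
I would reduce the two geometric claims (line containment in $\Qo$ and tangency to $\Do$) to a single algebraic lemma in the split octonions, and treat the rigidity statement separately by transferring the analogous statement from the affine model $(\Qd,\Dd)$ via the local equivalence of the two models.

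The key lemma is that, for every nonzero null $\m\in\Im(\tO)$, the annihilator $\m^0$ is totally null for $\<\,,\,\>$; that is, $\<\m',\m'\>=0$ for every $\m'\in\m^0$. The argument uses only alternativity of $\tO$. Since the associator $[a,b,c]=(ab)c-a(bc)$ is alternating, $[\m,\m',\m']=0$, and since $\overline{\m'}=-\m'$ for imaginary $\m'$, also $[\m,\m',\overline{\m'}]=0$. Hence
\[
(\m\m')\overline{\m'}=\m(\m'\overline{\m'})=\<\m',\m'\>\,\m.
\]
Substituting $\m\m'=0$ and using $\m\neq 0$ gives $\<\m',\m'\>=0$. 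Note that $\m\in\m^0$, since $\m^2=-\<\m,\m\>\mathbf{1}=0$, so polarizing the lemma yields $\<\m,\m'\>=0$ for every $\m'\in\m^0$ as well.

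Given the lemma, both geometric claims are immediate. For any $\m'\in\m^0$ not proportional to $\m$, a general point of the projective line through $[\m]$ and $[\m']$ is represented by $\m+t\m'$, and
\[
\<\m+t\m',\m+t\m'\>=\<\m,\m\>+2t\<\m,\m'\>+t^2\<\m',\m'\>=0,
\]
showing the whole line lies in $\Qo$. For tangency, $\<\m',\m'\>=0$ forces $\m'^2=-\<\m',\m'\>\mathbf{1}=0$, so $(\m+t\m')\m'=\m\m'+t\,\m'^2=0$; that is, $\m'\in(\m+t\m')^0$, and therefore the tangent direction $[\m']$ lies in $\Do$ at every point of the line.

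The rigidity claim is where the main work lies, since it is a finite-deformation rather than infinitesimal statement. The approach I would take is to invoke the embedding $(\Qd,\Dd)\hookrightarrow(\Qo,\Do)$ constructed in Appendix~\ref{app:emb}, which is $\G$-equivariant and sends affine lines to projective lines. Proposition~\ref{prop:hp} identifies affine lines as the rigid curves of $\Dd$, so their images under the embedding are rigid curves of $\Do$. Since a flat \ttf-distribution admits a unique rigid curve through each point in each admissible direction (Section~2), varying the initial data by translating the base point via $\G$ and rotating the direction within $\Do$ sweeps out the entire 5-parameter family of rigid curves, which must therefore coincide with the projective lines constructed above.
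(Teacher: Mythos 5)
The paper offers no explicit proof of this proposition --- it is framed simply as the projective analog of Proposition~\ref{prop:hp}, which is itself only said to be ``easy to verify'' --- so your argument must stand on its own, and it does. The alternativity computation showing that $\m^0$ is totally isotropic is the key observation: once you have $\<\m',\m'\>=0$, hence $\m'^2=0$ and (by polarization, since $\m^0$ is a subspace containing $\m$) also $\<\m,\m'\>=0$, the containment and tangency checks are immediate, exactly as you carry them out. Two minor points. First, the embedding $\iota$ and its compatibility with $\Dd$ and $\Do$ are established in Section~2.4 and in step~(1) of Section~\ref{sec:diag}, not in Appendix~\ref{app:emb}, which concerns the explicit $\g_2$-action. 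Second, for the rigidity statement your reduction via $\iota$ together with the $\G$-transitivity on $\Qo$ is logically sound, but it simply inherits whatever argument establishes Proposition~\ref{prop:hp}, which the paper also leaves implicit; a self-contained alternative would be to check the Bryant--Hsu abnormality condition for these projective lines directly. Lastly, the affine parametrization $\m+t\m'$ omits the point $[\m']$ itself; tangency of $\Do$ there follows from $\m'\m=-\m\m'-2\<\m,\m'\>\mathbf{1}=0$, a consequence of your lemma that is worth stating to make the ``everywhere tangent'' claim airtight.
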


\begin{definition}[Horizontal polygons] \label{def:hpp}A  {\em horizontal polygon} in $\Qo$  is a polygon in $\RP^6$ whose  edges  are  $\Do$-horizontal  projective lines in $\Qo$, as in Proposition \ref{prop:hpp}. The polygon is {\em non-degenerate} if every 3 consecutive vertices are non-collinear. 
 \end{definition}

\paragraph{$\G$-symmetry.} The automorphism group of $\tO$, i.e., the subgroup of  $\GL(\tO)$ preserving octonion multiplication,  is a non-compact 14-dimensional connected simple Lie group, denoted by $\G$.  See Appendix \ref{app:sym}.
The $\G$-action on  $\tO$ preserves the splitting $\tO=\Re(\tO)\oplus\Im(\tO)$ and acts trivially on $\Re(\tO)$. The action on $\Im(\tO)$ induces an effective action on $\P(\Im(\tO))$, clearly preserving $(\Qo,\Do)$. 
Therefore, one has

 \begin{prop}
 $(\Qo,\Do)$ is a flat \ttf-distribution, whose  symmetry  group is  $\G$. 
 \end{prop}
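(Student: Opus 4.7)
The plan is to follow the same recipe used for Proposition \ref{prop:hor}: first identify a large symmetry group acting on $(\Qo,\Do)$, then invoke Cartan's submaximality statement to deduce both flatness and the identification of the full symmetry algebra. This reduces the proposition to three ingredients: (i) $\Do$ is genuinely a rank 2 distribution; (ii) $(\Qo,\Do)$ is a \ttf-distribution; (iii) $\G$ acts effectively on $(\Qo,\Do)$ by symmetries.

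For (i), I would check the assertions built into the definition. The composition identity $\langle\m\m',\m\m'\rangle=\langle\m,\m\rangle\langle\m',\m'\rangle$ for the split octonions forces $L_\m(\tO)$ to lie in the null cone of $\tO$, a $(3,3)$-signature null subspace, so $\ker L_\m$ has dimension at least $4$ when $\m$ is null. Combined with $\overline{\m}=-\m$ for $\m\in\Im(\tO)$ and $\m\overline{\m}=\langle\m,\m\rangle\mathbf{1}=0$, this puts $\R\m\subset\m^0$; an explicit dimension count in a standard null basis of $\Im(\tO)$ then pins $\dim\m^0=3$. Polarizing the quadratic form shows $\m^0$ is tangent to the null cone at $\m$, so $\m^0/\R\m$ is a well-defined $2$-plane in $T_{[\m]}\Qo$, giving $\Do$.

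For (iii), any element of $\G=\mathrm{Aut}(\tO)$ preserves $\mathbf{1}$, hence the splitting $\tO=\Re(\tO)\oplus\Im(\tO)$ and (being a multiplicative automorphism) both the conjugation and the bilinear form $\langle\,,\,\rangle$ on $\Im(\tO)$. This preserves the null cone and therefore $\Qo$; since the annihilator $\m^0$ is defined purely by multiplication, $\Do$ is preserved as well. Effectiveness on $\RP^6=\P(\Im(\tO))$ is clear because $\G$ is connected and its only element acting as a scalar on $\Im(\tO)$ is the identity (a scalar $\lambda\neq 1$ would fail to preserve any nonzero product $\m\m'$ with $\m,\m',\m\m'\in\Im(\tO)$). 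For (ii), I would use the transitivity of $\G$ on the set of nonzero null vectors in $\Im(\tO)$ to reduce bracket-generation to a single point, pick a convenient nilpotent null $\m_0$, and compute $[\Do,\Do]$ and $[[\Do,\Do],\Do]$ in a local chart around $[\m_0]$ using the octonion multiplication formula; the alternating character of $L_\m$ on $\m^0/\R\m$ guarantees that one bracket produces a new independent direction and the next one spans the remaining two.

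With (i)--(iii) in hand, Cartan's submaximality statement, quoted earlier in this section, says a non-flat \ttf-distribution admits at most a $7$-dimensional local symmetry algebra; since $\dim\G=14$, the distribution $\Do$ must be flat. Then the same theorem identifies the full symmetry group of any flat \ttf-distribution as $\G$, completing the proposition. The main obstacle is step (ii): verifying bracket-generation by hand is the only nontrivial calculation, although one can alternatively pull it back from $(\Qd,\Dd)$ via the embedding constructed in Appendix~\ref{app:emb}, for which flatness was already established in Proposition~\ref{prop:hor}.
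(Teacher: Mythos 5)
Your proposal follows the same route the paper takes: exhibit the $14$-dimensional $\G$-action on $(\Qo,\Do)$ by symmetries, then invoke Cartan's classification of $(2,3,5)$-distributions to conclude flatness and identify the full symmetry group -- the paper presents this as essentially a one-line corollary of the preceding discussion of the $\G$-action. Your steps (i) and (ii) spell out what the paper takes for granted (that $\Do$ is a rank-$2$ distribution and bracket-generating), and your remark that step (ii) can be short-circuited by pulling back bracket-generation from $(\Qd,\Dd)$ through the embedding $\iota$ is a clean way to avoid the only genuinely laborious computation.
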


The  Lie algebra of $\G$ is $\g_2\subset \so_{4,3}\subset\so_{4,4}$; the first  inclusion is due to the $\G$-invariance of the  inner product of equation \eqref{eq:ip}, restricted to $\Im(\tO)$, the second from the inclusion $\Im(\tO)\subset\tO$.

\subsection{Putting it all together}
To prove  Theorem \ref{thm:main}, we combine the three models of the \CED\ presented so far, as summed up in the following diagram. 

\be\label{eq:diag}
\begin{tikzcd}
&\tQo \arrow[d]&\tQr\arrow[d]\arrow[l, "\Phi"', "\widetilde{}"]\\
\Qd \arrow[r,"\iota", hookrightarrow]&\Qo&\Qr
\end{tikzcd}
\ee

\mn

In this diagram appear the underlying manifolds of  our three models for the \CED, $\Qd, \Qo$ and $\Qr$, the universal covers of the last two (both of which are diffeomorphic to $S^2\times S^3$), as well as the following maps:
%
\begin{itemize}
\item An embedding $\iota:\Qd\hookrightarrow \Qo$. One defines first an affine chart $\Rtt\hookrightarrow\P(\Im(\tO))=\RP^6$, 
$$ (\q,\p)\mapsto [\m],\ \mbox{where } \m=\left(\begin{matrix}1&\q \\ \p&-1\end{matrix}\right),$$ 
then restrict   to $\Qd=\{\p\q=1\}\subset\Rtt.$ The image $\iota(\Qd)\subset\Qo$ is the complement in $\Qo$ of the `hyperplane section'  
$\{\left[\left(\begin{smallmatrix}x&\q \\ \p&-x\end{smallmatrix}\right)\right]\st \ x=\b\A=0\}$. 
\item  The double  covers  $\tQo\to\Qo$, $\tQr\to\Qr$ (the vertical arrows).
\begin{itemize}
\item Recall that $\Qo$ is the projectivized null cone in $\RP^6=\left(\Im(\tO)\setminus 0\right)/\R^*.$ If we quotient  by $\R^+$  instead of $\R^*$ we get the `spherized' null cone $\tQo\subset \left(\Im(\tO)\setminus 0\right)/\R^+=S^6,$ and a double cover $\tQo\to\Qo$, $\R^+\m\mapsto \R^*\m.$ We use this double cover to pull back $\Do$ to $\tDo$ on $\tQo$. 

\item Recall that  $\Qr=S^2\times\SO_3$,  define $\tQr:=S^2\times S^3$ and use the usual double cover $ S^3\to \SO_3$ to define the double cover $\tQr\to\Qr$. Explicitly,  to  an element $q\in S^3\subset \R^4=\H$ one associates the orthogonal transformation of $\R^3=\Im(\H)$, $\v\mapsto q\v \bar q$. Again, use this double cover to pull back $\Dr$ to $\tDr$ on $\tQr$. 
\end{itemize}
\item A  diffeomorphism\footnote{This diffeomorphism is essentially the same as the one constructed in \cite[Proposition 2]{BH}.}
   $\Phi:\tQr\to\tQo.$  For $(\v,q)\in \tQr=S^2\times S^3\subset \Im(\H)\times \H$ one defines\footnote{
   Note that for  this formula to make sense we identify $\R^3$ with $\Rts$ using the standard Euclidean inner product.}
\be\label{eqn:Phi}
\Phi (\v , q )=\R^+\begin{pmatrix} \mbox{Re}(\v q) & \v +\mbox{Im}(\v q)\\ \v-\mbox{Im}(\v q) & -\mbox{Re}(\v q)\end{pmatrix}.
\ee
\end{itemize}

\begin{theorem}\label{thm:diag}In diagram \eqref{eq:diag}, $\Phi$ is a diffeomorphism and all maps preserve the Cartan-Engel  distributions on the respective spaces. 
\end{theorem}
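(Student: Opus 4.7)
The plan is to handle the three classes of maps in diagram \eqref{eq:diag} in turn.

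For the embedding $\iota:\Qd\hookrightarrow\Qo$, the identity $\langle\m,\m\rangle=-1+\p\q$ shows the image is contained in the null cone $\Qo$, and $\q,\p$ can be read off from the off-diagonal entries of $\m$, giving a smooth inverse. To verify $\iota_*\Dd\subseteq\Do$, a tangent vector $(\dot\q,\dot\p)$ satisfying the Cartan--Engel condition $\dot\p=\q\times\dot\q$ lifts to $\m'=\left(\begin{smallmatrix}0&\dot\q\\ \dot\p&0\end{smallmatrix}\right)\in\Im(\tO)$. A block-by-block computation of the Zorn product $\m\m'$ then vanishes upon using $\p\q=1$, the tangent-space relation $\dot\p\q+\p\dot\q=0$, the antisymmetry $(\q\times\dot\q)\q=\vol(\q,\dot\q,\q)=0$, and the triple-product identity $\p\times(\q\times\dot\q)=\q(\p\dot\q)-\dot\q(\p\q)=-\dot\q$, so $\m'\in\m^0$, as required.

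For the two vertical double covers, the distributions $\tDo$ and $\tDr$ are by definition the pullbacks of $\Do$ and $\Dr$, so the projections tautologically preserve them.

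The substantive part is the diffeomorphism $\Phi$. Writing $x=\Re(\v q)$, $\q=\v+\Im(\v q)$, $\p=\v-\Im(\v q)$, the quaternionic identity $\Re(\v q)^2+|\Im(\v q)|^2=|\v q|^2=|\v|^2|q|^2=1$ gives $\p\q=|\v|^2-|\Im(\v q)|^2=x^2$, hence $\langle\m,\m\rangle=0$ and the image lies in $\tQo$. Since $\q+\p=2\v$ and $\Im(\v q)=(\q-\p)/2$, one can explicitly recover $(\v,q)$ from $\m$ (taking the $\R^+$-representative normalized so that $|\q+\p|=2$): $\v=(\q+\p)/2$ and $q=-\v\bigl(x+(\q-\p)/2\bigr)$, using $\bar\v=-\v$ and $|\v|=1$. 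This makes $\Phi$ a diffeomorphism. To prove $\Phi_*\tDr=\tDo$, take a horizontal $(\dot\v,\dot q)\in\tDr$ with angular velocity $\bo=2\dot q\bar q\in\Im(\H)$ satisfying $4\dot\v=\bo\times\v$ and $\bo\cdot\v=0$. Differentiating \eqref{eqn:Phi} yields an explicit $\dot\m\in\Im(\tO)$ in terms of $\v,\dot\v,q,\dot q$, and the required identity $\m\dot\m=0$ is verified block-by-block, translating each Zorn entry into quaternionic operations; the coefficient $\rho+1=4$ in the first rolling condition is precisely what produces the needed cancellations, making manifest once more that $\rho=3$ is the ratio for which $\Dr$ is flat.

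I expect the last quaternionic-to-octonionic bookkeeping to be the main obstacle. A conceptually cleaner alternative is to transfer the distribution-preservation from \cite[Prop.~2]{BH}, whose diffeomorphism is essentially $\Phi$, or to observe that $\Phi^*\Do$ and $\Dr$ are two flat \ttf-distributions on $\tQr$ invariant under a common $14$-dimensional symmetry group (the $\G$-action on $\tQo$ transported through $\Phi$); together with agreement at a single horizontal direction, $\G$-transitivity and connectedness then force their global coincidence.
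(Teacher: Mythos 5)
Your treatment of the embedding $\iota$ and the vertical double covers is correct and essentially equivalent to the paper's. For $\iota$, the paper computes the kernel of the $\tO$-valued one-form $\Omega=\m\,\d\m$ pulled back to $\Qd$, while you verify $\m\m'=0$ directly for the lifted tangent vector $\m'$; since $\Omega$ evaluated on $\m'$ is precisely $\m\m'$, these are the same calculation. Your argument that $\Phi$ is a diffeomorphism via an explicit inverse ($\v=(\q+\p)/2$, $q=-\v(x+(\q-\p)/2)$, after normalizing $|\q+\p|=2$) is also valid; the paper instead factors $\Phi=\pi\circ j\circ f$ and uses the inverse function theorem, but both routes work, and both ultimately rely on the same observation that $\q+\p\neq 0$ on the punctured null cone.

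The gap is in the substantive step, showing $\Phi_*\tDr=\tDo$. You only sketch the direct block-by-block verification of $\m\dot\m=0$ without carrying it out, and your proposed cleaner alternative is circular. Asserting that ``$\Phi^*\Do$ and $\tDr$ are two flat $(2,3,5)$-distributions on $\tQr$ invariant under a common $14$-dimensional symmetry group'' presupposes what must be proved: $\Phi^*\Do$ is by construction invariant under the transported action $\Phi^{-1}\circ\G\circ\Phi$, but you have not shown that $\tDr$ is invariant under that particular action, nor that the two intrinsic $14$-dimensional symmetry groups agree as subgroups of $\mathrm{Diff}(\tQr)$ rather than merely being abstractly isomorphic. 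The paper sidesteps this by working with a smaller, explicitly given group $K=S^3\times S^3$ acting transitively on $\tQr=S^2\times S^3$: it defines matching linear $K$-actions on $\Im(\H)\oplus\H$ and $\Im(\tO)$, checks that $\Phi$ is $K$-equivariant, shows $\tDr$ is $K$-invariant by an explicit computation on $\Qr$ and that $\tDo$ is $K$-invariant by exhibiting the infinitesimal $K$-action inside $\g_2$ via formula \eqref{eq:deriv}, and finally verifies $\Phi_*\tDr=\tDo$ at the single point $(\i,1)$ in coordinates. The $K$-transitivity then propagates equality everywhere. Your fallback of invoking \cite[Prop.~2]{BH} would be legitimate (the paper itself footnotes the identification with that diffeomorphism), but as written your argument leaves the central step open.
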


This is proved in Section \ref{sec:diag}. 
Now we can give a more precise statement of our  main theorem (Theorem \ref{thm:main}). Let $\Qo_*=\iota(\Qd)\subset \Qo$ (the complement of the hyperplane section $x=0$), 
and similarly $\tQo_*\subset \tQo, \tQr_*\subset\tQr, \Qr_*\subset\Qr$ the corresponding submanifolds under the maps of diagram \eqref{eq:diag}. A polygonal horizontal path in  
$\Qo$ (respectively $\tQo, \tQr, \Qr$)  is {\em generic} if all its vertices lie in $\Qo_*$ (respectively $\tQo_*, \tQr_*, \Qr_*$). 
A pair $(\Gamma, q)$, where $\Gamma$ is a closed non-degenerate spherical $n$-gon with vertices $\v_1,\ldots, \v_n$ and 
$q\in S^3$, is {\em generic} if the horizontal lift of $\Gamma$ to $\tQr=S^2\times S^3$, 
starting 
at $(\v_1,
q)$,
is a generic horizontal polygon. 
 
\mn {\bf Corollary} (Theorem \ref{thm:main}). \label{cor:corresp}
{\em Diagram \eqref{eq:diag} defines a bijective  correspondence between non-degenerate  dancing pairs of closed $n$-gons in $\RP^2$ and  generic pairs $([\Gamma], q)$, where  $q\in S^3$ and $[\Gamma]$ is an equivalence class of   non-degenerate closed spherical $n$-gon  with trivial lifted  rolling monodromy for $\rho=3$. }\\

In section \ref{ss:pfmain} we shall prove Theorem \ref{thm:main} using Theorem \ref{thm:diag}.

\section{Proofs}

\subsection{Theorem  \ref{thm:horpol}}\label{sec:horpol}

Using  induction on $n\geq 2$, the proof  reduces to the following two lemmas,  the $n=2,3$ cases of the theorem.

\mn\begin{minipage}{.7\textwidth}
\begin{lemma}[Theorem  \ref{thm:horpol} for $n=2$]\label{lemma:2gons}
Consider a horizontal non-degenerate 2-gon in  $\Qd$, given by  a pair of vertices $q_1, q_2\in\Qd$ such that the line $q_1q_2$ is horizontal. Then the projected pair of 2-gons, with vertices  $(A_1, A_2)$ and edges $(b_1,b_2)$, is inscribed, i.e., $b_1b_2\in A_1A_2.$ Conversely,    every inscribed pair of  2-gons   lifts  to  a unique  horizontal 2-gon in $\Qd$. 
\end{lemma}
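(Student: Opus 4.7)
The strategy is to compute explicitly what the horizontal condition of Definition \ref{def:hor} demands between two vertices $q_1=(\A_1,\b_1)$ and $q_2=(\A_2,\b_2)$ of $\Qd$, and then read off the inscribed condition from it. The key point is Proposition \ref{prop:hp}: an affine line in $\Rtt$ through $q_1$ with direction $(\dot\A,\dot\b)$ is an edge of a horizontal polygon iff it is contained in $\Qd$ and tangent to $\Dd$ at a single point (say $q_1$). So the horizontal condition for the edge $\overline{q_1q_2}$ reduces, using the direction $\dot\A=\A_2-\A_1$, $\dot\b=\b_2-\b_1$ evaluated at $(\A_1,\b_1)$, to the single equation
\[
\b_2-\b_1=\A_1\times(\A_2-\A_1)=\A_1\times\A_2,
\]
together with the tangency condition $(\b_2-\b_1)\A_1+\b_1(\A_2-\A_1)=0$, which follows automatically from $\b_i\A_i=1$ once the previous equation holds (as $(\A_1\times\A_2)\A_1=0$).

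For the forward direction, the identity $\b_2-\b_1=\A_1\times\A_2$ says precisely that the line $A_1A_2\subset\RPt$ has homogeneous coordinates $[\b_2-\b_1]\in\RPts$. The intersection point $B:=b_1b_2$ has homogeneous coordinates $\b_1\times\b_2\in\Rt$, and the incidence $B\in A_1A_2$ is the scalar equation
\[
(\A_1\times\A_2)(\b_1\times\b_2)=(\b_2-\b_1)(\b_1\times\b_2)=0,
\]
which holds automatically because $\b_i(\b_1\times\b_2)=0$ for $i=1,2$. This gives the inscribed condition. Non-degeneracy of the pair (in particular $A_i\notin b_i$) follows immediately from $\b_i\A_i=1\neq 0$.

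For the converse, start with arbitrary representatives $\A_i,\b_i$ of the projective classes and look for rescalings $\A_i\mapsto\lambda_i\A_i$, $\b_i\mapsto\mu_i\b_i$ realizing $\b_i\A_i=1$ and the horizontal relation above. The first condition fixes $\mu_i=(\lambda_i\b_i\A_i)^{-1}$, leaving the two free parameters $\lambda_1,\lambda_2$. The inscribed condition $(\A_1\times\A_2)(\b_1\times\b_2)=0$ means $\A_1\times\A_2$ lies in the span of $\b_1,\b_2$, so one can write $\A_1\times\A_2=\alpha\b_1+\beta\b_2$ for unique $\alpha,\beta$. Substituting, the horizontal equation becomes the diagonal system
\[
-\frac{1}{(\b_1\A_1)\lambda_1}=\lambda_1\lambda_2\alpha,\qquad
\frac{1}{(\b_2\A_2)\lambda_2}=\lambda_1\lambda_2\beta,
\]
which is equivalent to the single cubic $\lambda_1^3=\beta(\b_2\A_2)/(\alpha^2(\b_1\A_1)^2)$ (together with the determined expression for $\lambda_2$), hence has a unique real solution. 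The coefficients $\alpha,\beta$ are nonzero precisely when the line $A_1A_2$ coincides with neither $b_1$ nor $b_2$; I expect this to be the main (mild) obstacle, since the stated non-degeneracy for a $2$-gon only rules out $A_i\in b_i$. I would therefore verify that the horizontal condition automatically forces $\b_i\A_j=1$ for all $i,j$ (by pairing $\b_2-\b_1=\A_1\times\A_2$ against $\A_1$ and against $\A_2$) and observe that the natural notion of non-degeneracy for the \emph{horizontal} $2$-gon includes $A_i\notin b_j$ for all $i,j$; under this hypothesis $\alpha,\beta\neq 0$ and the unique lift exists.
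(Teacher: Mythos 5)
Your proof follows essentially the same route as the paper's: derive the explicit horizontal condition $\b_2-\b_1=\A_1\times\A_2$, show it directly gives the incidence $(\A_1\times\A_2)(\b_1\times\b_2)=0$, and for the converse expand $\A_1\times\A_2$ in the basis $\b_1,\b_2$ and solve the resulting cubic for the rescaling factors (the paper's $x_i$ correspond to your $\lambda_i$ once you normalize into $\Qd$). The one place you hedged — whether $\alpha,\beta\neq 0$ follows from the stated non-degeneracy, or whether you need the stronger hypothesis $A_i\notin b_j$ for all $i,j$ — is not actually a gap: if $\alpha=0$ then $[\A_1\times\A_2]=[\b_2]$, i.e.\ the line $A_1A_2$ equals $b_2$, but $A_2$ always lies on $A_1A_2$, so $A_2\in b_2$, contradicting the stated condition $A_i\notin b_i$; symmetrically $\beta=0$ forces $A_1\in b_1$. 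So the paper's non-degeneracy already implies $\alpha,\beta\neq 0$ and your argument closes as written.
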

\end{minipage}
\begin{minipage}{.3\textwidth}
\centering
\def\svgwidth{.85\textwidth}
\begingroup%
  \makeatletter%
  \providecommand\color[2][]{%
    \errmessage{(Inkscape) Color is used for the text in Inkscape, but the package 'color.sty' is not loaded}%
    \renewcommand\color[2][]{}%
  }%
  \providecommand\transparent[1]{%
    \errmessage{(Inkscape) Transparency is used (non-zero) for the text in Inkscape, but the package 'transparent.sty' is not loaded}%
    \renewcommand\transparent[1]{}%
  }%
  \providecommand\rotatebox[2]{#2}%
  \newcommand*\fsize{\dimexpr\f@size pt\relax}%
  \newcommand*\lineheight[1]{\fontsize{\fsize}{#1\fsize}\selectfont}%
  \ifx\svgwidth\undefined%
    \setlength{\unitlength}{180.32876803bp}%
    \ifx\svgscale\undefined%
      \relax%
    \else%
      \setlength{\unitlength}{\unitlength * \real{\svgscale}}%
    \fi%
  \else%
    \setlength{\unitlength}{\svgwidth}%
  \fi%
  \global\let\svgwidth\undefined%
  \global\let\svgscale\undefined%
  \makeatother%
  \begin{picture}(1,0.51928205)%
    \lineheight{1}%
    \setlength\tabcolsep{0pt}%
    \put(0,0){\includegraphics[width=\unitlength,page=1]{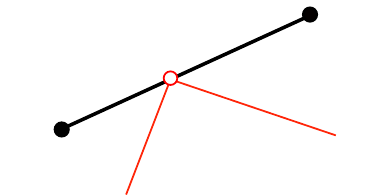}}%
    \put(0.38616494,0.04098082){\color[rgb]{0,0,0}\makebox(0,0)[lt]{\lineheight{0}\smash{\begin{tabular}[t]{l}$b_1$\end{tabular}}}}%
    \put(-0.00300521,0.19843143){\color[rgb]{0,0,0}\makebox(0,0)[lt]{\lineheight{0}\smash{\begin{tabular}[t]{l}$A_1$\end{tabular}}}}%
    \put(-0.13736302,0.11515343){\color[rgb]{0,0,0}\makebox(0,0)[lt]{\lineheight{0}\smash{\begin{tabular}[t]{l} \end{tabular}}}}%
    \put(0.75668293,0.23589039){\color[rgb]{0,0,0}\makebox(0,0)[lt]{\lineheight{0}\smash{\begin{tabular}[t]{l}$b_2$\end{tabular}}}}%
    \put(0.87451802,0.48642175){\color[rgb]{0,0,0}\makebox(0,0)[lt]{\lineheight{0}\smash{\begin{tabular}[t]{l}$A_2$\end{tabular}}}}%
  \end{picture}%
\endgroup%

\end{minipage}

\begin{proof}Suppose $q_i=(\A_i, \b_i)$, $i=1,2$, such that  $q_1q_2$  is horizontal. Then, by definition (Proposition \ref{prop:hor}), $\q_1\times\q_2=\p_2-\p_1$. This equation  implies $(\q_1\times\q_2)(\p_1\times\p_2)=0$, i.e., $b_1b_2\in A_1A_2.$ 

Conversely, suppose   an inscribed pair of  2-gons is given, i.e., $b_1b_2\in A_1A_2$, with $b_1\neq b_2, A_1\neq A_2$, $A_i\not\in b_i,$ $i=1,2.$  The horizontality conditions on a lift $(\q_i,\p_i)\in \Qd$, $i=1,2$, is $\q_1\times\q_2=\p_2-\p_1$ (see Proposition \ref{prop:hor}).

Picking an arbitrary lift   $(\q_i,\p_i)\in \Qd$, any other lift is of the form  $(x_i\q_i ,  \p_i/x_i),$ for some $x_i\in\R\setminus 0$,  $i=1,2,$  and the  horizontality condition  is $(x_1\q_1)\times(x_2\q_2)=\p_2/x_2-\p_1/x_1$.

 Now  $b_1b_2\in A_1A_2$  implies  
$\q_1\times\q_2 =\lambda_1\p_1+\lambda_2\p_2$ for some  $\lambda_i\neq 0,$ 
therefore $x_1x_2 (\lambda_1\p_1 +\lambda_2\p_2)= \p_2/x_2-\p_1/x_1$. Thus the horizontality condition becomes
 $(x_1)^2x_2\lambda_1=-1$ and $x_1(x_2)^2\lambda_2=1$.
This system has a unique solution $x_1=\sqrt[3]{\lambda_2/(\lambda_1)^2}$, $x_2=-\sqrt[3]{\lambda_1/(\lambda_2)^2}$, as needed. 
\end{proof}

\noindent\begin{minipage}{.6\textwidth}
\begin{lemma}[Theorem  \ref{thm:horpol} for $n=3$]\label{lemma:3gons}
Consider a  horizontal non-degenerate 3-gon in $\Qd$, given by  3 vertices $q_1, q_2, q_3\in\Qd$ such that $q_1q_2$ and $q_2q_3$ are horizontal lines. Then the projected pair of inscribed 3-gons with vertices $A_1,A_2,A_3$ and edges $b_1, b_2, b_3$ is a dancing pair; i.e., it satisfies 
\begin{equation}\label{eq:dp1}
[A_{2}, B_{1}, A_{1},D]+[B_{2}, A_{2},C, A_{3}]=0, 
\end{equation}
where $B_1=b_1b_2, B_2=b_2b_3, C:=b_1 a_{2}$ and $ D:=a_1b_{2}$. 
Conversely, a dancing pair of   3-gons  lifts uniquely to a horizontal 3-gon in $Q$. 
\end{lemma}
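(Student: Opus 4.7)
The plan is to reduce Lemma \ref{lemma:3gons} to two applications of the 2-gon case, Lemma \ref{lemma:2gons}, one for each horizontal edge, and to identify the resulting compatibility at the shared vertex with the single scalar dancing equation at $A_2$.

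For the forward direction, suppose $(q_1, q_2, q_3)$ is a non-degenerate horizontal 3-gon in $\Qd$ with $q_i = (\q_i, \p_i)$. Horizontality of the two edges is captured by the vector identities
\[ \p_2 - \p_1 = \q_1 \times \q_2, \qquad \p_3 - \p_2 = \q_2 \times \q_3. \]
Pairing each identity with the various $\q_j$'s yields the normalizations $\p_i\q_i = 1$ and $\p_i\q_j = 1$ for $|i-j|=1$ (using that $\q_k\times\q_{k+1}$ annihilates both $\q_k$ and $\q_{k+1}$), together with the key coupling $\p_1\q_3 + \p_3\q_1 = 2$, obtained by pairing the first identity with $\q_3$ and the second with $\q_1$ so that the two triple products on the right agree and cancel. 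Lemma \ref{lemma:2gons} applied to each edge shows that the projected pair is inscribed; the remaining scalar dancing equation is then verified by writing each cross-ratio in homogeneous coordinates on the line $a_i$ spanned by $\q_i,\q_{i+1}$, expressing each of $B_i, C, D$ as a ratio $(\alpha\!:\!\beta)$ determined by the pairings $\p_j\q_k$. Substituting the scalar identities collapses the sum of the two cross-ratios to zero via the coupling relation.

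For the converse, given a non-degenerate dancing 3-gon, apply Lemma \ref{lemma:2gons} independently to the two inscribed 2-subgons $(A_1, A_2; b_1, b_2)$ and $(A_2, A_3; b_2, b_3)$ to produce unique horizontal lifts $(\tilde q_1, \tilde q_2)$ and $(\tilde q_2', \tilde q_3)$ in $\Qd$. Both $\tilde q_2$ and $\tilde q_2'$ lie above $A_2 \in \RP^2$, so they differ by a single rescaling $(\q_2, \p_2) \mapsto (\lambda\q_2, \lambda^{-1}\p_2)$ with $\lambda \in \R^*$. Tracking $\lambda$ through the explicit cube-root formulas from the proof of Lemma \ref{lemma:2gons}, the condition $\lambda = 1$ becomes a single polynomial equation in the pairings $\p_j\q_k$. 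Running the forward cross-ratio computation in reverse identifies this equation with the dancing equation at $A_2$: hence the dancing condition is necessary and sufficient for the two lifts to agree, and uniqueness in Lemma \ref{lemma:2gons} then yields a unique horizontal 3-gon.

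The principal technical obstacle is the explicit identification of the algebraic matching condition $\lambda = 1$ with the geometric cross-ratio sum in the dancing equation. I expect a direct homogeneous-coordinate computation guided by the pairing identities above to be sufficient; an affine chart placing $A_2$ at the origin with $A_1, A_3$ on the coordinate axes can further simplify the bookkeeping by turning the pairings $\p_j\q_k$ into transparent affine quantities.
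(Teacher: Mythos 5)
Your proposal is correct and follows essentially the same route as the paper: both reduce to Lemma \ref{lemma:2gons} applied to each edge, exploit the same pairing identities (in particular $\p_1\q_3+\p_3\q_1=2$) via homogeneous-coordinate cross-ratio computations, and in the converse direction both compare the two lifts of $(A_2,b_2)$ via a rescaling parameter $\lambda$ and show the dancing equation forces $\lambda=1$ (using non-degeneracy, i.e.\ $(\q_1\times\q_2)\q_3\neq 0$). The only cosmetic difference is that you derive the coupling relation up front by pairing the two horizontality identities against $\q_3$ and $\q_1$ respectively and canceling the equal triple products, whereas the paper obtains it at the end of the cross-ratio calculation by direct substitution; these are the same computation reorganized.
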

\end{minipage}
\quad
\begin{minipage}{.4\textwidth}
\centering\def\svgwidth{.9\textwidth}
\begingroup%
  \makeatletter%
  \providecommand\color[2][]{%
    \errmessage{(Inkscape) Color is used for the text in Inkscape, but the package 'color.sty' is not loaded}%
    \renewcommand\color[2][]{}%
  }%
  \providecommand\transparent[1]{%
    \errmessage{(Inkscape) Transparency is used (non-zero) for the text in Inkscape, but the package 'transparent.sty' is not loaded}%
    \renewcommand\transparent[1]{}%
  }%
  \providecommand\rotatebox[2]{#2}%
  \newcommand*\fsize{\dimexpr\f@size pt\relax}%
  \newcommand*\lineheight[1]{\fontsize{\fsize}{#1\fsize}\selectfont}%
  \ifx\svgwidth\undefined%
    \setlength{\unitlength}{255.7929476bp}%
    \ifx\svgscale\undefined%
      \relax%
    \else%
      \setlength{\unitlength}{\unitlength * \real{\svgscale}}%
    \fi%
  \else%
    \setlength{\unitlength}{\svgwidth}%
  \fi%
  \global\let\svgwidth\undefined%
  \global\let\svgscale\undefined%
  \makeatother%
  \begin{picture}(1,0.79034222)%
    \lineheight{1}%
    \setlength\tabcolsep{0pt}%
    \put(0,0){\includegraphics[width=\unitlength,page=1]{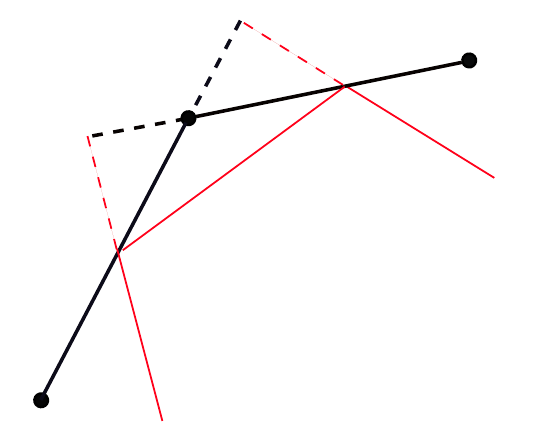}}%
    \put(0.31349067,0.07602948){\color[rgb]{0,0,0}\makebox(0,0)[lt]{\lineheight{0}\smash{\begin{tabular}[t]{l}\s$b_1$\end{tabular}}}}%
    \put(-0.00095679,0.08266454){\color[rgb]{0,0,0}\makebox(0,0)[lt]{\lineheight{0}\smash{\begin{tabular}[t]{l}\s$A_1$\end{tabular}}}}%
    \put(0.27069983,0.61120929){\color[rgb]{0,0,0}\makebox(0,0)[lt]{\lineheight{0}\smash{\begin{tabular}[t]{l}\s$A_2$\end{tabular}}}}%
    \put(0.08812801,0.31708228){\color[rgb]{0,0,0}\makebox(0,0)[lt]{\lineheight{0}\smash{\begin{tabular}[t]{l}\s$B_1$\end{tabular}}}}%
    \put(0.63682035,0.67222607){\color[rgb]{0,0,0}\makebox(0,0)[lt]{\lineheight{0}\smash{\begin{tabular}[t]{l}\s$B_2$\end{tabular}}}}%
    \put(0.84043852,0.71480873){\color[rgb]{0,0,0}\makebox(0,0)[lt]{\lineheight{0}\smash{\begin{tabular}[t]{l}\s$A_3$\end{tabular}}}}%
    \put(0.88601888,0.51021666){\color[rgb]{0,0,0}\makebox(0,0)[lt]{\lineheight{0}\smash{\begin{tabular}[t]{l}\s$b_3$\end{tabular}}}}%
    \put(0.43600523,0.40108886){\color[rgb]{0,0,0}\makebox(0,0)[lt]{\lineheight{0}\smash{\begin{tabular}[t]{l}\s$b_2$\end{tabular}}}}%
    \put(0,0){\includegraphics[width=\unitlength,page=2]{3gons.pdf}}%
    \put(0.06949166,0.56970857){\color[rgb]{0,0,0}\makebox(0,0)[lt]{\lineheight{0}\smash{\begin{tabular}[t]{l}\s$C$\end{tabular}}}}%
    \put(0.47784504,0.76717639){\color[rgb]{0,0,0}\makebox(0,0)[lt]{\lineheight{0}\smash{\begin{tabular}[t]{l}\s$D$\end{tabular}}}}%
    \put(0,0){\includegraphics[width=\unitlength,page=3]{3gons.pdf}}%
    \put(0.06803045,0.18996344){\color[rgb]{0,0,0}\makebox(0,0)[lt]{\lineheight{0}\smash{\begin{tabular}[t]{l}\s$a_1$\end{tabular}}}}%
    \put(0.47546903,0.62922673){\color[rgb]{0,0,0}\makebox(0,0)[lt]{\lineheight{0}\smash{\begin{tabular}[t]{l}\s$a_2$\end{tabular}}}}%
  \end{picture}%
\endgroup%

\end{minipage}

\begin{proof} Let $q_i=(\q_i, \p_i), i=1,2,3.$ By Lemma \ref{lemma:2gons}, the projected pair of 3-gons is inscribed and we need to show that Equation \eqref{eq:dp1} is satisfied. We give homogeneous coordinates to all points involved:
$${\bf B}_1:=\p_1\times\p_2,\
{\bf B}_2:=\p_2\times\p_3,\
{\bf C}:=\p_1\times (\q_2\times\q_3),\
{\bf D}:=\p_3\times (\q_1\times\q_2).
$$

We now  write these  expressions in terms of $\q_1,\q_2, 
\q_3$. From the horizontality condition  $\p_2-\p_1=\q_1\times\q_2$ and the vector identity \be\label{eq:vi}
{\bf a}\times 
({\bf B}\times {\bf C})=({\bf a}{\bf C}){\bf B}-({\bf a}{\bf B}){\bf C}
\ee
follows
$$\p_1\times\p_2=\p_1\times(\p_1+\q_1\times\q_2)={(\p_{1}\q_{2})}\q_1 -\q_2.$$ 
Now $(\q_2-\q_1, \p_2-\p_1)$ is tangent to $\Qd=\{\p\q=1\}$ at $(\q_1, \p_1)$, hence 
\begin{align*}
0&=(\p_2-\p_1)\q_1+\p_1(\q_2-\q_1)=
\p_2\q_1+\p_1\q_2-2=\\
&=(\p_1-\q_1\times\q_2)\q_1+\p_1\q_2-2=\p_1\q_2-1,
\end{align*}
hence ${\bf B}_1=\q_1-\q_2.$ Similarly, ${\bf B}_2=\q_2-\q_3.$ Using again equation  \eqref{eq:vi}, ${\bf C}=(\p_1\q_3)\q_2-\q_3$ and ${\bf D}=\q_1-(\p_3\q_1)\q_2.$

Now it is easy to show that if 4 collinear points $A_1, \dots , A_4\in\RP^2$ are given by
homogeneous coordinates $\q_i\in\R^3$, such that $\q_3 = \q_1 + \q_2, \q_4 = k\q_1 + \q_2,$
then $[A_1, A_2, A_3, A_4] = k$ (see for example \cite[Theorem 74, page 105]{K}). Using this formula and the above
expressions for ${\bf B}_1, {\bf B}_2, {\bf C}, {\bf D},$ 
we get $$ [A_2,B_1,A_1, D]=1-\p_3\q_1,\quad [B_2,A_2, C, A_3]=1-\p_1\q_3,
$$ 
hence $$
 [A_2,B_1,A_1, D]+ [B_2,A_2, C, A_3] =2-\p_3\q_1-\p_1\q_3.
 $$
 
 Next 
 \begin{align*}
 \p_3\q_1+\p_1\q_3  &= (\p_2 +\q_2\times\q_3)\q_1 +
 (\p_2-\q_1\times\q_2)\q_3 \\
 &={\p_2\q_1 }+{\p_2\q_3}=2,
 \end{align*}
  so Equation \eqref{eq:dp1} is satisfied, as needed.

\medskip In the other direction,  suppose an inscribed pair of 3-gons is given,  with vertices $A_1, A_2, A_3$ and edges $b_1,b_2,b_3$, satisfying Equation \eqref{eq:dp1}. By  Lemma \ref{lemma:2gons}, we can uniquely lift $(A_1,b_1), (A_2,b_2)$ to points $(\q_1,\p_1), (\q_2,\p_2)\in\Qd$ on a horizontal line. 
  Likewise, we can uniquely  lift $(A_2,b_2), (A_3,b_3)$ to points $(\lambda\q_2, \p_2/\lambda), (\q_3,\p_3)\in\Qd$  on a horizontal line, for some $\lambda\neq 0.$ See Figure \ref{fig:match}. 

\begin{figure}[H]
\centering
\def\svgwidth{.6\textwidth}
\begingroup%
  \makeatletter%
  \providecommand\color[2][]{%
    \errmessage{(Inkscape) Color is used for the text in Inkscape, but the package 'color.sty' is not loaded}%
    \renewcommand\color[2][]{}%
  }%
  \providecommand\transparent[1]{%
    \errmessage{(Inkscape) Transparency is used (non-zero) for the text in Inkscape, but the package 'transparent.sty' is not loaded}%
    \renewcommand\transparent[1]{}%
  }%
  \providecommand\rotatebox[2]{#2}%
  \newcommand*\fsize{\dimexpr\f@size pt\relax}%
  \newcommand*\lineheight[1]{\fontsize{\fsize}{#1\fsize}\selectfont}%
  \ifx\svgwidth\undefined%
    \setlength{\unitlength}{386.20503103bp}%
    \ifx\svgscale\undefined%
      \relax%
    \else%
      \setlength{\unitlength}{\unitlength * \real{\svgscale}}%
    \fi%
  \else%
    \setlength{\unitlength}{\svgwidth}%
  \fi%
  \global\let\svgwidth\undefined%
  \global\let\svgscale\undefined%
  \makeatother%
  \begin{picture}(1,0.24236462)%
    \lineheight{1}%
    \setlength\tabcolsep{0pt}%
    \put(-0.00063371,0.03384359){\color[rgb]{0,0,0}\makebox(0,0)[lt]{\lineheight{0}\smash{\begin{tabular}[t]{l}\s$(\A_1,\b_1)$\end{tabular}}}}%
    \put(0.50428604,0.02707826){\color[rgb]{0,0,0}\makebox(0,0)[lt]{\lineheight{0}\smash{\begin{tabular}[t]{l}\s$(\A_2,\b_2)$\end{tabular}}}}%
    \put(0.28455756,0.22702133){\color[rgb]{0,0,0}\makebox(0,0)[lt]{\lineheight{0}\smash{\begin{tabular}[t]{l}\s$(\lambda\A_2,\b_2/\lambda)$\end{tabular}}}}%
    \put(0.84783821,0.06172052){\color[rgb]{0,0,0}\makebox(0,0)[lt]{\lineheight{0}\smash{\begin{tabular}[t]{l}\s$(\A_3,\b_3)$\end{tabular}}}}%
    \put(0,0){\includegraphics[width=\unitlength,page=1]{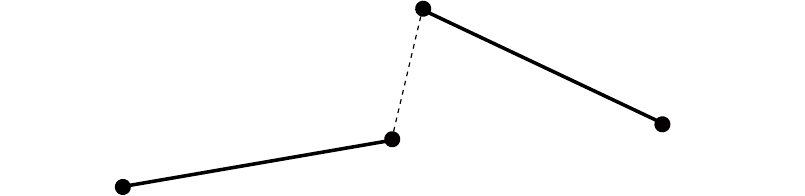}}%
  \end{picture}%
\endgroup%

\captionof{figure}{Lemma \ref{lemma:3gons}. }
 \label{fig:match}
\end{figure}

We now show that the dancing condition \eqref{eq:dp1} implies $\lambda=1$, i.e., the three lifted points $(\A_1,\b_1),(\A_2,\b_2),(\A_3,\b_3)$  are the vertices of a horizontal 3-gon.

We use the following:
\begin{itemize}
\item $\p_2=\p_1+\q_1\times\q_2$, $\p_3={1\over\lambda}\p_2 + \lambda\q_2\times\q_3$;
\item $A_1=[\q_1]$, $A_2=[\q_2]$
\item $B_1=[\p_1\times\p_2]=[\q_1-\q_2]$, $B_2=[\p_2\times\p_3]=[\lambda\q_2-\q_3]$;
\item $C=[\p_1\times(\q_2\times\q_3)]=[(\p_1\q_3)\q_2-\q_3]$;
\item $D=[\p_3\times(\q_\times\q_2)]=[(\p_3\q_2)\q_1-(\p_3\q_1)\q_2]=[{1\over\lambda}\q_1 - (\p_3\q_1)\q_2]$.
\end{itemize}
A similar computation as above gives
\begin{equation*}
\begin{split}
[A_2,B_1,A_1,D] &= 1-\lambda\p_3\q_1\\
[B_2,A_2, C, A_3] &= 1-{1\over\lambda}\p_1\q_3
\end{split}
\end{equation*}
Now, from $\p_3={1\over\lambda}\p_2+\lambda\q_2\times\q_3$, we get that
$$
\p_3\q_1={1\over\lambda}+ \lambda (\q_1\times\q_2)\q_3;
$$whereas, from $\p_2=\p_1+\q_1\times\q_2$ we obtain
$$
\p_1\q_3=\lambda -(\q_1\times\q_2)\q_3,
$$since $\p_2\q_3=\lambda$.

\medskip

Thus,
\begin{equation*}
\begin{split}
0&= [A_2,B_1,A_1,D]+ [B_2,A_2, C, A_3]\\ 
&= 2-\lambda\p_3\q_1-{1\over\lambda}\p_1\q_3\\
&= 2-(1+\lambda^2 (\q_1\times\q_2)\q_3)-(1-{1\over\lambda}(\q_1\times\q_2)\q_3)\\
&=\left( {1\over\lambda}-\lambda^2\right)(\q_1\times\q_2)\q_3
\end{split}
\end{equation*}
Since $\q_1, \q_2, \q_3$ are non-collinear $(\q_1\times\q_2)\q_3\neq 0$, therefore, $\lambda=1$. 
\end{proof}

Theorem \ref{thm:horpol} now follows from these previous two lemmas by induction on $n$, using  Lemma \ref{lemma:3gons} for the inductive step. 
\qed

\subsection{Theorem \ref{thm:regularpolys}}\label{sec:reg}

We identify $\R^3=\Im(\H)$, $(x,y,z)\mapsto x\i+y\j+z\k$, 
and use repeatedly  the following well known facts: \begin{enumerate}[(i)]
\item If $\u\in\R^3$  is  a unit vector and  $\theta\in\R$ then $\v\mapsto \v'=e^{\theta\u}\v e^{-\theta\u}$ is the rotation about the axis $\R\u$ by the angle $2\theta$, in the sense given by the `right hand rule' ($\det(\v,\v',\u)>0$).   
\item $e^{\theta\u}=\cos\theta+\u \sin\theta.$
\end{enumerate}

\mn\paragraph{Proof of Part (b).} (Part (a) will follow easily from part (b).) Let  $\v_0,...,\v_{n-1}$ be the vertices of $\Gamma$, arranged on a circle of (spherical) radius $\phi$, centered at the north pole $\k$ of the stationary sphere (which we assume here to be a unit sphere, to simplify notation). Let $w$ be the winding number of $\Gamma$ about  $\k$, so that $\theta:=2\pi w/n$ is the angle of rotation at  $\k$ sending $\v_i\mapsto \v_{i+1}$. That is, setting $q:=e^{\theta\k/2}$, one has $\v_i=q^i\v_0\bar q^i.$ As  we roll the moving sphere  along  the  edge of $\Gamma$ joining $\v_i$ to $\v_{i+1}$, an arc of a great circle  of length $\delta$, the moving sphere  rotates about the axis  through its center in the direction of 
$$\u_i:={\v_i\times \v_{i+1}\over \| \v_i\times \v_{i+1}\|}$$ 
by an angle of $4\delta$ (due to the 3:1 radius ratio). See Fig. \ref{fig:lift}. 

\begin{figure}[h]
\centering
\def\svgwidth{.6\textwidth}\import{figures/}{lifted1.pdf_tex}
\caption{\ }\label{fig:lift}
\end{figure}

Thus the lifted monodromy due to rolling along this  edge is $g_i:=e^{2\delta \u_i},$ and the total lifted monodromy is 
$$g:=g_{n-1}\cdots g_1 g_0.$$
Now, clearly $\u_i=q^i\u_0\bar q^i,$ hence $g_i=q^i g_0\bar q^i,$ so 
$$g=\left(q^{n-1}g_0\bar q^{n-1}\right)\left(q^{n-2}g_0\bar q^{n-2}\right)\cdots \left(qg_0\bar q \right) g_0=q^n(\bar q g_0)^n.
$$
Next  
$$q^n=e^{n\theta\k/2}=e^{n(2\pi w/n)\k/2}=e^{\pi\k w}=(-1)^w,
$$
hence $g=(-1)^w(\bar q g_0)^n.$ It follows that  the trivial lifted monodromy condition, $g=1$, is equivalent to 
\be (\bar q g_0)^n=(-1)^w.\ee
\begin{lemma}\label{lemma:p}
The condition $p^n=(-1)^w,$ for a unit quaternion $p$, is equivalent to the existence of an integer $w'\equiv w$ (mod 2) such that $\Re(p)=\cos(\pi w'/n).$
\end{lemma}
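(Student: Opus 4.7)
\textbf{Proof plan for Lemma \ref{lemma:p}.} The plan is to reduce the claim to the quaternionic analogue of de Moivre's formula. Any unit quaternion $p$ can be written in polar form as $p = \cos\alpha + \u\sin\alpha$ for some real $\alpha$ and some unit imaginary quaternion $\u \in S^2 \subset \Im(\H)$ (when $p = \pm 1$ the vector $\u$ is arbitrary and $\alpha \in \pi\Z$). Since $\u^2 = -1$, this is precisely $p = e^{\alpha\u}$ in the sense of property (ii) recalled at the start of Section \ref{sec:reg}, so
\[
p^n = e^{n\alpha\u} = \cos(n\alpha) + \u\sin(n\alpha).
\]

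First I would handle the forward direction. Suppose $p^n = (-1)^w$. Since $(-1)^w \in \R$, the imaginary part $\u\sin(n\alpha)$ must vanish; as $\u$ is a unit vector this forces $\sin(n\alpha) = 0$, hence $n\alpha = \pi w'$ for some integer $w'$. Then
\[
(-1)^w = p^n = \cos(n\alpha) = \cos(\pi w') = (-1)^{w'},
\]
so $w' \equiv w \pmod 2$, and $\Re(p) = \cos\alpha = \cos(\pi w'/n)$, which is exactly the asserted condition.

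Conversely, suppose $\Re(p) = \cos(\pi w'/n)$ for some integer $w' \equiv w \pmod 2$. Choose the polar form of $p$ with $\alpha = \pi w'/n$ (possibly after replacing $\u$ by $-\u$, which is harmless since we may absorb a sign into $\u$). Then
\[
p^n = \cos(\pi w') + \u\sin(\pi w') = (-1)^{w'} = (-1)^w,
\]
as required.

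The only step that requires even a moment's care is the degenerate case $p = \pm 1$, where $\sin\alpha = 0$ and $\u$ is not uniquely determined; but in that case $\Re(p) = \pm 1 = \cos(\pi w'/n)$ with $w' \in \{0, n\}$, and a direct check $p^n = (\pm 1)^n$ confirms the parity condition. I do not anticipate any genuine obstacle: the lemma is essentially the statement that a unit quaternion is a (possibly fractional) root of $\pm 1$ precisely when its real part is a cosine of a rational multiple of $\pi$, with the sign tracked by parity.
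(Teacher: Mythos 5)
Your proof is correct and takes essentially the same approach as the paper: write $p=e^{\alpha\u}=\cos\alpha+\u\sin\alpha$ in polar form, apply de Moivre's formula $p^n=\cos(n\alpha)+\u\sin(n\alpha)$, and read off the parity condition on $w'$. The paper compresses the two directions into a chain of equivalences, while you spell out each direction and flag the $p=\pm1$ degeneracy and the need to replace $\u$ by $-\u$ when choosing the polar form in the converse; these are exactly the small points that make the paper's terse version rigorous, so the content is identical.
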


\begin{proof}Write $p=e^{t\u}=\cos t+\u \sin t, $ for some unit imaginary quaternion $\u$ and $t\in\R$. Then $p^n=\cos (n t)+\u \sin (n t)=(-1)^w \Leftrightarrow \cos(nt)=(-1)^w\Leftrightarrow nt=\pi w'$ for some integer $w'\equiv w$ (mod 2). 
\end{proof}

To apply the last  Lemma to $p=\bar q g_0$ we calculate its real part, 
\begin{align}\label{eq:re}
\begin{split}
\Re(\bar q g_0)&=\Re\left(e^{-\theta \k/2}e^{2\delta \u_0}\right)=\\
&=\cos(\theta/2)\cos (2\delta)+(\k\cdot \u_0) \sin(\theta/2)\sin(2\delta) .
\end{split}
\end{align}
Next we have 
$$\u_0={\v_0\times \v_1\over \|\v_0\times \v_1\|}, \ \ 
\|\v_0\times \v_1\|=\sin\delta,\ \      
 (\v_0\times \v_1)\cdot\k=\sin\theta\sin^2\phi,
 $$
 thus
$$
\k\cdot \u_0={\sin^2\phi\sin\theta\over \sin\delta}.
$$

\noindent\begin{minipage}{.65\textwidth}
\hspace{1.5em}As for $\delta$, we consider the spherical right triangle with vertices $\k, \v_0, M$, where  $M$ is the midpoint of the edge of $\Gamma$ joining  $\v_0$ with $\v_1$. See Figure \ref{fig:trigo}.

\hspace{1.5em} By standard spherical trigonometry (e.g.,  formula (R3) of  \cite{W}), 
$$
 \sin(\delta/2)=\sin\phi  \sin(\theta/2).
$$
\end{minipage}
\quad
\begin{minipage}{.35\textwidth}
\centering
\def\svgwidth{1\textwidth}
\begingroup%
  \makeatletter%
  \providecommand\color[2][]{%
    \errmessage{(Inkscape) Color is used for the text in Inkscape, but the package 'color.sty' is not loaded}%
    \renewcommand\color[2][]{}%
  }%
  \providecommand\transparent[1]{%
    \errmessage{(Inkscape) Transparency is used (non-zero) for the text in Inkscape, but the package 'transparent.sty' is not loaded}%
    \renewcommand\transparent[1]{}%
  }%
  \providecommand\rotatebox[2]{#2}%
  \newcommand*\fsize{\dimexpr\f@size pt\relax}%
  \newcommand*\lineheight[1]{\fontsize{\fsize}{#1\fsize}\selectfont}%
  \ifx\svgwidth\undefined%
    \setlength{\unitlength}{165.58499908bp}%
    \ifx\svgscale\undefined%
      \relax%
    \else%
      \setlength{\unitlength}{\unitlength * \real{\svgscale}}%
    \fi%
  \else%
    \setlength{\unitlength}{\svgwidth}%
  \fi%
  \global\let\svgwidth\undefined%
  \global\let\svgscale\undefined%
  \makeatother%
  \begin{picture}(1,0.90584841)%
    \lineheight{1}%
    \setlength\tabcolsep{0pt}%
    \put(0,0){\includegraphics[width=\unitlength,page=1]{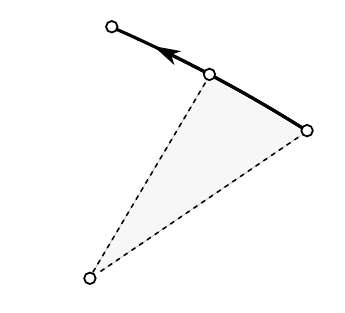}}%
    \put(0.77137979,0.66565637){\color[rgb]{0,0,0}\makebox(0,0)[lt]{\lineheight{1.25}\smash{\begin{tabular}[t]{l}\small${\delta\over 2}$\end{tabular}}}}%
    \put(0.4449804,0.31433784){\color[rgb]{0,0,0}\makebox(0,0)[lt]{\lineheight{1.25}\smash{\begin{tabular}[t]{l}\small${\theta\over 2}$\end{tabular}}}}%
    \put(0.34756282,0.86562524){\color[rgb]{0,0,0}\makebox(0,0)[lt]{\lineheight{1.25}\smash{\begin{tabular}[t]{l}\s$\v_1$\end{tabular}}}}%
    \put(0.2271492,0.00522059){\color[rgb]{0,0,0}\makebox(0,0)[lt]{\lineheight{1.25}\smash{\begin{tabular}[t]{l}\s$\k$\end{tabular}}}}%
    \put(0,0){\includegraphics[width=\unitlength,page=2]{trigo.pdf}}%
    \put(0.91769829,0.54286562){\color[rgb]{0,0,0}\makebox(0,0)[lt]{\lineheight{1.25}\smash{\begin{tabular}[t]{l}\s$\v_0$\end{tabular}}}}%
    \put(0.61795757,0.73301611){\color[rgb]{0,0,0}\makebox(0,0)[lt]{\lineheight{1.25}\smash{\begin{tabular}[t]{l}\s$M$\end{tabular}}}}%
    \put(0.6091557,0.27586322){\color[rgb]{0,0,0}\makebox(0,0)[lt]{\lineheight{1.25}\smash{\begin{tabular}[t]{l} \s$\phi$\end{tabular}}}}%
  \end{picture}%
\endgroup%

\captionof{figure}{\ }\label{fig:trigo}
\end{minipage}


Using the  last two displayed  equations 
in \eqref{eq:re}, we obtain, after some simplification, 
$$\Re(\bar q g_0)=\cos\left(\theta/2\right)
\left[1-4\sin^2\left(\theta/2\right)\sin^2\phi\right].
$$
Recalling that $\theta=2\pi w/n$ and the lifted monodromy triviality condition  $\Re(\bar q g_0)=\cos(\pi w'/n)$ for some $w'\equiv w$ (mod 2), we obtain Equation \eqref{eq:roll}, 
\be\label{eq:roll1}
\cos\left({\pi  w'/  n}\right)=\cos\left({\pi  w/  n}\right)
\left[1-4\sin^2\left({\pi  w/ n}\right)\sin^2\phi\right].
\ee

This completes the proof of part (b) of Theorem \ref{thm:regularpolys}, except for the bound $w<w'<n$. This follows from the last equation: by the periodicity of the left hand side, one can assume, without loss of generality, that $0\leq w'< n$. Now the right hand side  is a strictly decreasing function of $\phi\in (0,\pi/2)$, taking values in the open interval $(\cos(3\pi w/n), \cos(\pi w/n))$. Thus if $\cos(\pi w'/n)$ is one of these values we must have that $\pi w/n < \pi w'/n$, or $w<w'.$ 

\mn\paragraph{Proof of Part (a).} This proof is very similar to part (b)  above, except that for the  trivial (unlifted) monodromy condition on $\Gamma$, one requires only that  the lifted monodromy satisfies $g=\pm 1,$ so  in Lemma  \ref{lemma:p}  one can drop the requirement $w'\equiv w$ (mod 2). 

\mn\paragraph{Proof of Part (c).} Let $\Gamma'$ be the regular polygon traced on the moving sphere as it is rolled along $\Gamma$. Its vertices are $\v'_0, \ldots, \v'_{n-1}$, arranged on a circle of radius $\phi'$, with  $\theta'$ the angle of rotation at the center, sending $\v'_i\mapsto \v'_{i+1}.$ To show that $\w'$ in equation  \eqref{eq:roll} is the winding number of $\Gamma'$ about its center we thus need to show that $\theta':=2\pi w'/n$. 

We consider the shaded triangle Figure \ref{fig:trigo}  and the corresponding triangle on the moving sphere. On the stationary sphere the angles   are $\theta/2, \pi/2, A$, and the sides opposite the  first two angles are $\delta/2, \phi$ (respectively). On the moving sphere the angles are  $\theta'/2, \pi/2, A$, and the sides opposite the  first two angles are $3\delta/2, \phi'$ (respectively). By formulas (R3) and (R9) of \cite{W} applied to these two triangles, 
\begin{align*}
\cos(\theta/ 2)&=\sin A\cos (\delta/ 2),\\
 \cos(\theta'/2)&=\sin A\cos (3\delta/ 2),\\
 \sin(\delta/2)&=\sin(\theta/2)\sin\phi.
\end{align*}
Hence 
\begin{align*}\cos(\theta'/ 2)&=
\cos(\theta/ 2){\cos(3\delta/2)\over \cos(\delta/2)}=
 \cos(\theta/ 2)\left[1-4\sin^2(\delta/2)\right]\\
 &=\cos(\theta/ 2)\left[1-4\sin^2(\theta/2)\sin^2\phi\right]=\cos\left({\pi  w'/  n}\right).
\end{align*}
Thus $\cos(\theta'/ 2)=\cos\left({\pi  w'/  n}\right)$, hence $\theta'=2\pi w'/n$, as needed.

\mn\paragraph{Proof of Part (d).} We first  show  that for all $n\geq 6$, $w=2$ and $w'=4$, Equation \eqref{eq:roll1} has a solution, using an intermediate-value argument: let  $\alpha=2\pi/n,$ then  we need to show that $\cos(2\alpha)=\cos\alpha\left(1-4\sin^2\alpha\sin^2\phi\right)$ has a solution $\phi\in(0, \pi/2)$. As $\phi$ varies in $[0,\pi/2]$,  the right hand side of this equation decreases monotonically  from $\cos\alpha$ to $\cos\alpha\left(1-4\sin^2\alpha\right)=\cos(3\alpha)$. For $n\geq 6$ one has $3\alpha\leq \pi$, hence $\cos(\alpha)>\cos(2\alpha)>\cos(3\alpha)$, so there is an intermediate  value of $\phi\in(0,\pi/2)$ for which the right hand side is $\cos(2\alpha)$.

To show that there is  no solution of \eqref{eq:roll1}  with $n<6$, we can either use  Theorem  \ref{thm:intro1}, or more elementary, prove directly that there are no solutions to Equation \eqref{eq:roll1} with $n<6$, $0<w<n/2$,  $w<w'<n$ and $w'\equiv w$ (mod 2).
There are only 3 cases of $(n,w,w')$ satisfying these restrictions: $(4,1,3),(5,1,3)$ and $(5,2,4).$ In these 3 cases it is easy to show that the equation reduces to $\cos\phi=0,$ so there is no solution $\phi\in (0,\pi/2)$. 
\qed

\subsection{Theorem \ref{thm:diag}}\label{sec:diag}

We divide the proof as follows:

\begin{enumerate}[ (1)]
\item  $\iota:\Qd\to \Qo$ is an embedding, mapping $\Dd$ to $\Do$. 

\item $\Phi:\tQr\to\tQo$ is a diffeomorphism
\item $\Phi$ maps $\tDr$ to $\tDo$. 
\end{enumerate}

\mn\paragraph{Proof of (1).} This was shown in \cite[\S3.3]{BHN}, with slightly different notation, so we sketch the proof here. The map $\Rtt\to\P(\Im(\tO))$, $(\q,\p)\mapsto[\m],$ where  $\m=\left(\begin{smallmatrix}1&\q \\ \p&-1\end{smallmatrix}\right),$ is clearly injective (an affine chart). If $(\q,\p)\in \Qd$, i.e., $\p\q=1$,  then   $\langle\m, \m\rangle=-1+\b\A=0$, i.e. $\iota(\Qd)\subset \Qo$. 

Next let $\Omega:=\m\d \m$, an 
$\tO$-valued 1-form on $\Im(\tO)$. Explicitly, 
\be\label{eq:Omega}\Omega=
\left(\begin{array}{lr}
x\,\d x-\d\p\, \q  & 
x\,\d\q - \q \,\d x+\p\times \d\p \\  
 \p\, \d x- x\d\p + \q\times \d\q&
x\,\d x-\p \,\d\q
\end{array}\right).
\ee

Next one calculates  that $r^*\Omega =r^2\Omega$, $r\in\R^*,$  and that $\Omega$ vanishes along  the radial directions  in the null cone $C\subset \Im(\tO)$, hence the  restriction of $\Omega$ to $C$ descends to the quotient $(C\setminus 0)/\R^*=\Qo$, with kernel $\Do$ (see \cite[Proposition 3.5]{BHN} for the detailed calculation). It is thus enough to show that the kernel of the pull-back of $\Omega$ to $\Qd$ by $\tilde\iota:(\A,\b)\mapsto \left(\begin{smallmatrix}1&\q \\ \p&-1\end{smallmatrix}\right)$ is $\Dd$. Indeed, from Equation \eqref{eq:Omega} follows
$$\tilde\iota^*\Omega=\left(\begin{array}{lr}
-\d\p \, \q & 
\d\q+\p\times \d\p \\  
-\d\p + \q\times \d\q&
-\p \,\d\q
\end{array}\right),$$
then one checks  that the common kernel of the entries of this 1-form, restricted to $\Qd$, is indeed $\Dd$.\qed

\mn\paragraph{Proof of (2).} We express $\Phi:\tQr\to\tQo$ as the composition 
\be\label{eq:comp}
S^2\times S^3\xrightarrow{f}S^2\times S^3\xrightarrow{j}C\setminus 0\xrightarrow{\pi}\tQo,
\ee 
where \begin{itemize}
\item $f$ is the restriction to $S^2\times S^3$ of the map 
$$\Im(\H)\times\H\to \Im(\H)\times\H,\ \  (\v,q)\mapsto (\v, \v q),$$
 \item $j$ is the restriction to $S^2\times S^3$ of the linear isomorphism 
 \be\label{eq:iso}
  \Im(\H)\times\H\to\Im(\tO),\ \  (\v, x+\u)\mapsto \left(\begin{matrix}x&\v+\u\\ \v-\u&-x\end{matrix}\right),
 \ee
 where $\u, \v\in\Im(\H)$, $x\in\R$, 
\item $C=\{\m\in\Im(\tO)\st \langle\m,\m\rangle=0\}$ is the  {\em null cone} in $\Im(\tO)$,   and 
\item 
$\pi$ is the restriction to $C\setminus 0$ of the canonical projection 
 $$ \Im(\tO)\setminus 0\to (\Im(\tO)\setminus 0)/\R^+=S^6, \ \  \m\mapsto \R^+\m.
 $$
\end{itemize}

\begin{remark}
We note, as we did after Equation \eqref{eqn:Phi}, that for Equation 
\eqref{eq:iso} to make sense we need to identify $\R^3$ (column vectors) with $\Rts$ (row vectors) using the standard Euclidean inner product in $\R^3$. This convention will be kept implicitly  for the rest of the article.
\end{remark}

First,   $f$ is a diffeomorphism because it has an inverse, $$(\v,q)\mapsto (\v, -\v q).$$  

Next, we verify  that $j$ maps $S^2\times S^3$ into $C\setminus 0$: if $(\v, x+\u)\in S^2\times S^3$, then $|\v|^2=x^2+|\u|^2=1,$ hence if $\m= j(\v, x+\u)= \left(\begin{smallmatrix}x&\v+\u\\ \v-\u&-x\end{smallmatrix}\right)$ then $\langle\m, \m\rangle=-x^2+(\v+\u)\cdot(\v-\u)=|\v|^2-(x^2+|\u|^2)=1-1=0,$ hence $ j(\v, x+\u)\in C$. Now $0$ is clearly not in the image, since $j$ is the restriction of an injectve linear map and $0$ is not in $S^2\times S^3$. 

Next, to show that  $\pi\circ j$ is a diffeomorphism, we show that it is bijective and a local diffeomorphism. 

To show that $\pi\circ j$ is injective, note that $j$ is injective and its image intersects each of the fibers of $\pi$  at a single point. 

To show that $\pi\circ j$ is surjective  let  $[\m]\in\tQo$, then 
$\m=\left(\begin{smallmatrix}x&\A\\ \b&-x\end{smallmatrix}\right)\in C\setminus 0$, hence $ -x^2+\b\A=0.$ Let $\v:=(\A+\b)/2, \u:=(\A-\b)/2,$ then $|x+\u|^2=x^2+|\u|^2
=x^2+|\A-\b|^2/4=x^2+|\A+\b|^2/4-\b\A=|\v|^2.$ Thus $|x+\u|=|\v|\neq 0,$ so we can  (positively) rescale $\v$ and $x+\u$ simultaneously to unit vectors. 

To show that $\pi\circ j$ is a local diffeomorphism it is enough to show, by the inverse function theorem, that its differential $(\pi\circ j)_*$ is bijective at each point of $S^2\times S^3$. 
Now  $j$ is the restriction of a linear isomorphism, hence is an immersion, i.e., $j_*$ is injective at each point of $S^2\times S^3$, and  $\pi$ is a submersion, i.e., $\pi_*$ is surjective at each point of $C\setminus 0$, with kernel in the radial direction, transverse to the image of $j_*$. Hence the composition $\pi_*\circ j_* =(\pi\circ j)_*$ is bijective, as needed. \qed

\mn\paragraph{Proof of (3).} We shall prove this statement in several steps:
\begin{enumerate}[(a)]
\item Define a transitive action of $K:=S^3\times S^3$ on $\tQr$ and  $\tQo$. 

\item  $\Phi:\tQr\to\tQo$  is $K$-equivariant.
\item $\tDr,\tDo$ are $K$-invariant. 
 \item $\Phi_*$ maps  $\tDr$ to $\tDo$ at a single point of $\tQr$. 
\end{enumerate}

We proceed with proofs of each of these steps. 

\mn\paragraph{Step (a).}Define a linear action of $K=S^3\times S^3$ (pairs of unit quaternions)   on $\Im(\H)\oplus\H$ via   
\be\label{eq:Kaction}
(q_1,q_2):(\v,q)\mapsto (q_1\v\bar q_1, q_1q\bar q_2).
\ee
This leaves invariant $\tQr=S^2\times S^3$ so defines a $K$-action on it, clearly transitive. 
 
This linear $K$-action on $\Im(\H)\oplus\H$ also induces a $K$-action on $\Im(\tO)$ via the linear isomorphism $\Im(\H)\oplus\H\to\Im( \tO)$ of formula \eqref{eq:iso}. 
Under this isomorphism, $\langle\ ,\ \rangle$ becomes the  quadratic form $|\v|^2-|q|^2$ on $\Im(\H)\oplus\H$, which is clearly $K$-invariant, hence the null cone $C\subset \Im(\tO)$ is $K$-invariant, inducing a $K$-action on $\tQo=(C\setminus 0)/\R^+.$

\mn\paragraph{Step (b).}Each of the maps $f,\iota, \pi$ of \eqref{eq:comp} are  $K$-equivariant, hence so is $\Phi$. The (easy) verification is  left to the reader. 


\mn\paragraph{Step (c).}\label{stepc} 
We will use throughout  a well-known (and easy to verify) quaternion identity: 
$$\v\u=-\v\cdot\u+\v\times\u, \ \   \forall\v,\u\in\Im(\H).
$$

We start with the $K$-invariance of $\tDr$. Note that the $K$-action on 
$\tQr$ commutes with that of $(\pm 1, \pm 1)$, 
hence it descends to an action of $K/(\pm 1,\pm 1)=\SO_3\times \SO_3$ on the quotient 
$\Qr=
\tQr/(1,\pm1)=
S^2\times \SO_3,$ 
$(g_1, g_2): (\v, g)\mapsto (g_1 \v, g_1gg_2^{-1}).$
 Thus, in order to show that $\tDr$ is $K$-invariant it is enough to show that $\Dr$ is $\SO_3\times \SO_3$-invariant. Now  at a point $(\v,g)\in\Qr=S^2\times \SO_3$, $\Dr$  consists of vectors $(\dot\v, \dot g)\in T_{(\v, g)}\Qr$ satisfying  
\be\label{eq:roll1}4\dot\v=\bo \times  \v, \ \ \bo\cdot\v = 0,\ee
where $\bo\times \x=\dot gg^{-1}\x,$ see Definition \ref{def:rol}. Now $(g_1, g_2)_*:
(\dot\v, \dot g)\mapsto (g_1\dot\v, g_1\dot gg_2^{-1}),$ thus 
$$g_1\dot gg_2^{-1}(g_1gg_2^{-1})^{-1}\x=g_1\dot gg^{-1}g_1^{-1}\x=g_1(\bo\times g_1^{-1}\x)=(g_1\bo)\times \x,$$
hence $(g_1, g_2)_*:(\dot\v, \bo)\mapsto (g_1\dot\v, g_1\bo).$ Now if $(\dot\v, \bo)$ satisfy  Equations \eqref{eq:roll1} then $4g_1\dot\v=4g_1(\bo \times  \v)=4(g_1\bo) \times ( g_1\v),$ $(g_1\bo)\cdot (g_1\v) =\bo\cdot \v=0,$ hence $(g_1\dot\v, g_1\bo)$ satisfy them as well. This shows that $\tDr$ is $K$-invariant.

To show that $\tDo$ is $K$-invariant it is enough to show that the $K$-action defined on $\Im(\tO)$ by the isomorphism $\Im(\H)\oplus\H\to\Im(\tO)$ of formula \eqref{eq:iso} 
preserves octonion multiplication. For this, we show that the image of the  infinitesimal $K$-action on $\Im(\tO)$ is contained in the Lie algebra $\g_2\subset\End(\Im(\tO))$ defined by Equation \eqref{eq:deriv}.

To this end,  let $(\v_1, \v_2)\in \Im(\H)\oplus\Im(\H)$, thought of as the Lie algebra of $K=S^3\times S^3$. The infinitesimal action of $(\v_1, \v_2)$ on $\Im(\H)\oplus\H$, corresponding to the $K$-action of Equation \eqref{eq:Kaction}, is
$$(\v, q)\mapsto (2\v_1\times \v, \v_1q-q\v_2).
$$
Conjugating this action with the ismorphism $\Im(\H)\oplus\Im(\H)\to\Im(\tO)$ of Equation \eqref{eq:iso}, we obtain the infinitesimal action of $(\v_1, \v_2)$ on $\Im(\tO)$, 
$$
\left(\begin{array}{lr}
x&\A\\ 
\b&-x 
\end{array}\right)\mapsto \left(\begin{array}{lr}
\tilde x&\tilde\A\\ 
\tilde\b&-\tilde x 
\end{array}\right),$$
where 
$$\left\{\begin{array}{rl}
\tilde\A&={1\over 2}(3\v_1+\v_2)\times\A+{1\over 2}(\v_1-\v_2)\times\b+(\v_1-\v_2)x \\
&\\
\tilde\b&= {1\over 2}(\v_1-\v_2)\times\A+{1\over 2}(3\v_1+\v_2)\times\b+(\v_2-\v_1)x\\
&\\
\tilde x& ={1\over 2}(\v_2-\v_1)\cdot(\A-\b)
 \end{array}\right.
 $$
%
(to simplify notation, all vectors in this formula are column vectors).  One can see easily that this is the action of $\rho(T, \bQ, \bp)/2$ of Equation \eqref{eq:deriv}, where 
\be\label{eq:emb}T\x=(3\v_1+\v_2)\times\x,\ \bQ=\v_1-\v_2,\ \bp=\v_2-\v_1.
\ee 
This concludes the proof of Step (c). \qed

\mn\paragraph{Step (d).}  We  first find  equations for $\tDr$ on $\tQr=S^2\times S^3\subset \Im(\H)\oplus\H$, using coordinates $\v\in \Im(\H),$ $s+\w\in\H$. 

\begin{lemma}  $\tDr\subset T\tQr$ is given by 
\begin{align*}
&2\d\v+\v\times (s \,\d\u-\u \,\d s+\u\times \d\u)=0&&\mbox{ \em  (no slip),}\\
&\v\cdot (s \,\d\u-\u \,\d s+\u\times \d\u)=0&&\mbox{ \em (no twist).}
\end{align*}
\end{lemma}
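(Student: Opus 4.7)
The plan is to pull back the equations defining $\Dr$ on $\Qr = S^2\times\SO_3$ under the double cover $\tQr \to \Qr$ and rewrite them in quaternion coordinates. Recall that the cover identifies $q=s+\u \in S^3$ with the rotation $g_q\in\SO_3$ acting by $g_q\x = q\x\bar q$ for $\x\in\Im(\H)$. So first I would compute the angular velocity $\bo$, defined by $\dot g g^{-1}\x = \bo\times\x$, in terms of $(s,\u,\dot s,\dot\u)$.

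The key calculation is the following. For a curve $q(t)\in S^3$ with $g(t)=g_{q(t)}$, we have $g(t)g(0)^{-1}\x = q(t)\bar q(0)\,\x\,q(0)\bar q(t)$, and differentiating at $t=0$ gives $\dot g g^{-1}\x = \dot q\bar q\,\x - \x\,\dot q\bar q$. Since $|q|^2=1$ implies $\dot q\bar q\in\Im(\H)$, and since $\a\b - \b\a = 2\a\times\b$ for $\a,\b\in\Im(\H)$ (from the identity $\a\b=-\a\cdot\b+\a\times\b$ that will be used repeatedly), we obtain $\bo = 2\dot q\bar q$. Expanding $\dot q\bar q=(\dot s+\dot\u)(s-\u)$ and using $-\dot\u\u = \dot\u\cdot\u + \u\times\dot\u$, the scalar parts cancel (because $s\dot s+\u\cdot\dot\u = \tfrac{1}{2}\tfrac{d}{dt}|q|^2=0$), leaving
\begin{equation*}
\bo \;=\; 2\bigl(s\,\dot\u - \dot s\,\u + \u\times\dot\u\bigr),
\end{equation*}
or, in form notation, $\bo = 2(s\,\d\u - \u\,\d s + \u\times\d\u)$.

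Now I would just substitute into the defining equations of $\Dr$ from Definition \ref{def:rol} specialized to $\rho=3$, namely $4\dot\v = \bo\times\v$ and $\bo\cdot\v=0$. The first becomes $4\,\d\v = -2\v\times(s\,\d\u-\u\,\d s+\u\times\d\u)$, which after dividing by $2$ gives the stated no-slip equation. The second becomes $2\v\cdot(s\,\d\u-\u\,\d s+\u\times\d\u)=0$, which is the no-twist equation.

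There is essentially no obstacle here; the only thing to be careful about is the sign of the cross-product term when converting $\dot\u\u$ into $\u\times\dot\u$ via $\a\b=-\a\cdot\b+\a\times\b$, and the factor of $2$ arising from the double cover (this is the reason $\bo = 2\dot q\bar q$ rather than $\dot q\bar q$). Once $\bo$ is written in coordinates, the equations of the lemma are immediate transcriptions of Definition \ref{def:rol}, and the pull-back $\tDr$ on $\tQr$ is cut out by exactly these two 1-form equations on $T\tQr$.
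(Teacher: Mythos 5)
Your proposal is correct and takes essentially the same route as the paper: both compute the angular velocity $\bo = 2\dot q\bar q$ from the double cover $q\mapsto(\x\mapsto q\x\bar q)$, expand $\dot q\bar q=(\dot s+\dot\u)(s-\u)$ using $s\dot s+\u\cdot\dot\u=0$ to isolate the imaginary part $s\dot\u-\u\dot s+\u\times\dot\u$, and substitute into the $\rho=3$ no-slip and no-twist conditions. The only cosmetic difference is that you derive $\dot gg^{-1}\x=\dot q\bar q\,\x-\x\,\dot q\bar q$ by differentiating $q(t)\bar q(0)\,\x\,q(0)\bar q(t)$, while the paper differentiates $\x=q\X\bar q$ directly; the two computations are equivalent.
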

\begin{proof}
Recall  first the equations that define $\Dr$ (Definition  \ref{def:rol}, for $\rho=3$):
$$4\dot\v=\bo \times  \v \mbox{ (no slip),}
\quad \bo\cdot\v = 0 \mbox{ (no twist),}
$$
where $\bo$ is defined by $\dot gg^{-1}\x=\bo\times\x.$ 
\newcommand{\X}{{\bf X}}

Now let $q=q(t)\in S^3$, then $1=q\bar q$ implies $0=\dot q \bar q+q\dot{\bar q},$ i.e., $\dot q\bar q\in \Im(\H).$  
Next define $g=\Ad(q)$, i.e.,  $\x=g\X=q\X \bar q,$ where $\X\in\R^3$ (fixed). 
Then, on the one hand, 
$$\dot\x=\dot g\X=\dot g g^{-1}\x=\bo\times \x,
$$
and on the other hand, 
$$\dot \x=\dot q\X \bar q+q\X\dot{\bar q}=
\dot q\bar q \x q\bar q+q\bar q \x q\dot{\bar q}=
\dot q\bar q \x - \x \dot q{\bar q}=
2\dot q\bar q\times \x,$$
hence  $\bo=2\dot q\bar q$. 

Next, writing $q=s+\u$, one has $1=q\bar q=s^2+|\u|^2,$ hence 
$0=s\dot s +\u\cdot\dot\u$, thus
\begin{align}\label{eq:dq}
\begin{split}
\dot q\bar q
&=(\dot s+ \dot \u)(s-\u)
=\dot s s - \dot s \u +\dot\u s -\dot\u\u\\
&=-\dot\u\cdot \u- \dot s \u +\dot\u s -(-\dot\u\cdot\u+\dot\u\times\u)\\
&=s\dot \u-\u\dot s +\u\times \dot\u.
\end{split}
\end{align}

It follows that  the no slip equation on $\Qr$, $4\dot\v=\bo\times \v$, pulled back to $\tQr$ by $q\mapsto \Ad(q)$, is
$$2\dot\v+\v\times \dot q\bar q=2\dot\v+\v\times(s\dot \u-\u\dot s +\u\times \dot\u)=0.$$

The no-twist equation also follows from the above expression \eqref{eq:dq} for $\dot q\bar q$,
$$\dot q\bar q\cdot\v=\v\cdot(s\dot \u-\u\dot s +\u\times \dot\u)=0,$$
as needed. 
\end{proof}

\mn{\em Proof of step (d) continued.} Now fix the point $(\i,1)\in \tQr$. From the last lemma follows that $\tDr$ at this point is given by 
\be\label{eq:droll}
\d w_1=\d  v_1=2\d {v_2}-\d  w_3=2\d  v_3+\d  w_2=0.
\ee

Next consider $\Phi(\i,1)=[{0\ 2\i\choose 0\ 0}]\in \tQo.$ According to Equation \eqref{eq:Omega}, at this point $\tDo$, pulled-back to the tangent to  $C\subset \Im(\H)$ at ${0\ 2\i\choose 0\ 0}$,  is given  by 
\be\label{eq:doct}
\d x=\d{ b}_1= \d{ A}_2=\d{ A}_3=0.
\ee
Now $\Phi=\pi\circ \iota\circ f$, and $\iota\circ f$  is given by $(\v,s+\u)\mapsto {x\quad \,\,\A\choose \,\,\b \quad -x}$, where 
$$
\A=(1+s)\v+\v\times \u,\ \b=(1-s)\v-\v\times\u,\ x=-\v\cdot\w.
$$
The pull back of  Equations \eqref{eq:doct} under this map are  
$$\d w_1=\d s=2\d v_2-\d w_3=2\d v_3 +\d w_2=0.$$
This coincides with Equations \eqref{eq:droll}, modulo the  tangency  equations to $S^2\times S^3$ at $(\i,1)$,  $\d v_1=\d s=0.$
\qed

\subsection{Theorem \ref{thm:main}}\label{ss:pfmain}
The proof is based on Theorem \ref{thm:diag}. We need to establish  first two lemmas. 

To state the  first lemma, we consider an ordered pair of points 
$\left([\v_1], [\v_{2}]\right)\in \left(S^2/\pm 1\right)\times \left(S^2/\pm 1\right),$ and  associate with it a rolling monodromy $\mu\in S^3$, as follows. 
%
Consider a simple geodesic segment from  $[\v_1]$ to $[\v_2]$ (there are 2 such segments), lift it to a spherical segment in $S^2$ (there are 2 such lifts), roll the moving sphere along this segment, then take the resulting  lifted rolling monodromy $\mu\in S^3$. 

\begin{lemma}\label{lemma:lifts}The lifted rolling monodromy $\mu\in S^3$ depends only on the ordered  pair $\left([\v_1], [\v_{2}]\right)\in \left(S^2/\pm 1\right)\times \left(S^2/\pm 1\right)$ and not on the various choices made. 
\end{lemma}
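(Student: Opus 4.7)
My plan is to decompose the ambiguity into two independent pieces and dispose of them separately. The choices are (i) which of the two antipodal lifts $\tilde\alpha, -\tilde\alpha \subset S^2$ of the chosen $\RP^2$-arc we roll along, and (ii) which of the two simple geodesic arcs in $\RP^2$ joining $[\v_1]$ to $[\v_2]$ we pick in the first place.

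For (i), I would observe that both defining conditions of $\Dr$ in Definition~\ref{def:rol}, namely $(\rho+1)\dot\v = \bo \times \v$ and $\bo \cdot \v = 0$, are invariant under the involution $\v \mapsto -\v$. Hence the angular velocity vector $\bo(t)$ along $-\tilde\alpha$ equals the one along $\tilde\alpha$, so the induced curves in $\SO_3$ (starting at $1$) and their lifts in $S^3$ (starting at $1$) are literally the same, and the two lifted monodromies coincide.

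For (ii), the idea is to compute both monodromies explicitly in convenient coordinates. Place the great circle through $\v_1, \v_2$ on the equator of the stationary sphere, take $\v_1 = \i$ and $\v_2 = \cos\theta\,\i + \sin\theta\,\j$ for some $\theta \in (0,\pi)$. Rolling with $\rho = 3$ along an oriented arc of length $\delta$ of a great circle rotates the moving sphere by $4\delta$ about the axis orthogonal to the arc's plane (sign by the right-hand rule relative to the direction of travel); using the paper's convention that $e^{\tau n}$ encodes rotation by $2\tau$ about the unit axis $n$, this rotation lifts to $e^{2\delta\, n} \in S^3$. Lifting both arcs starting at $\v_1$, the short arc (length $\theta$, axis $\k$) runs $\v_1 \to \v_2$ and yields $e^{2\theta\k}$, while the long arc (length $\pi-\theta$, axis $-\k$) runs $\v_1 \to -\v_2$ and yields $e^{2(\pi-\theta)(-\k)} = e^{-2\pi\k}\cdot e^{2\theta\k}$.

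The crux --- and the only place where the $3{:}1$ ratio actually enters the lemma --- is the $S^3$ identity
$$e^{-2\pi\k} = \cos 2\pi - \k\sin 2\pi = 1,$$
without which the two expressions would differ by a sign (or worse). This is exactly the phenomenon flagged in Example~\ref{ex:eq}: a full revolution along a great circle of $S^2$ has trivial lifted monodromy precisely when $\rho$ is an odd integer. The arithmetic is routine; the conceptual point worth flagging is that this lemma is one more place where $\rho=3$ is doing non-trivial work. Combining (i) and (ii), the lifted rolling monodromy $\mu \in S^3$ depends only on the ordered pair $([\v_1], [\v_2])$, as claimed.
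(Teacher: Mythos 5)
Your proof is correct and its computational content matches the paper's; the minor reorganization you make---factoring the four possible lifts into two independent binary choices, and disposing of the antipodal-lift choice by the conceptual observation that the defining conditions of $\Dr$ in Definition~\ref{def:rol} are invariant under $\v\mapsto -\v$, so that the induced curves in $\SO_3$ (and hence their lifts to $S^3$) are literally identical---is a cleaner way to handle what the paper enumerates as cases (i)--(iv) and dispatches with an ``analyzed similarly'' for two of them. The choice-of-arc half of your argument reproduces the paper's computation $e^{2(\pi-\theta)(-\k)} = e^{-2\pi\k}\,e^{2\theta\k}$ with $e^{-2\pi\k}=1$. One imprecision worth correcting: you attribute the identity $e^{-2\pi\k}=1$ to the phenomenon of Example~\ref{ex:eq}, namely that a full revolution has trivial lifted monodromy iff $\rho$ is odd. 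But the fact your argument actually uses is that a \emph{half}-revolution (arc length $\pi$) has trivial lifted monodromy; running your computation for general $\rho$ gives $e^{-(\rho+1)\pi\k/2}$, which equals $1$ iff $\rho\equiv 3\pmod 4$, a strictly stronger condition than $\rho$ odd (for $\rho=5$ it is $-1$ and the lemma would fail). Since the paper fixes $\rho=3$ throughout, this does not affect the validity of your proof, but the reference to Example~\ref{ex:eq} as ``exactly'' the phenomenon in play should be softened.
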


\begin{proof}Let $\delta:=$dist$([\v_1], [\v_{2}]),$ $0<\delta\leq\pi/2. $  There are two directed geodesic segments in $S^2/\pm 1$ connecting $[\v_1]$ to $ [\v_{2}]$, of lengths $\delta, \pi-\delta.$ Each has two possible lifts  to geodesic segments in $S^2$, of the same length, a total of 4 possibilities, with endpoints (i) $\v_1, \v_{2}$, (ii) $\v_1, -\v_{2}$, (iii)  $-\v_1, -\v_2$, (iv) $-\v_1, \v_2$, of lengths $\delta, \pi-\delta, \delta, \pi-\delta $ (respectively). See Figure \ref{fig:lifts}. 

\begin{figure}[h!]
\centering
\def\svgwidth{.35\textwidth}\import{figures/}{lifts.pdf_tex}
\caption{\ }
\label{fig:lifts}
\end{figure}

Let us take case (i).  Let $\w=\v_1\times\v_{2}/\|\v_1\times\v_{2}\|.$ As we roll the small sphere along the spherical arc segment of length $\delta$ from
 $\v_1$ to $\v_{2}$, it rotates about the $\w$ axis by an angle of $4\delta$. Thus $\mu=e^{2\w\delta}.$ In case (ii), the arc length changes to $\pi-\delta$ and $\w$ changes to $-\w$. Thus $\mu=e^{2(\pi-\delta)(-\w)}=e^{2\w\delta}$, same as in case (i). Cases (iii) and (iv) are analyzed similarly. 
\end{proof}

The  second lemma concerns the map $\Phi:\tQr\to\tQo$ of Equation \eqref{eqn:Phi}. 

\begin{lemma}\label{lemma:Phi}
Let $(\v,q)\in\tQr$, $[\m]=\Phi(\v,q)\in \tQo.$ Then 
$\Phi(-\v,q)=[-\m].$ 
\end{lemma}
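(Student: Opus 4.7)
The plan is to verify the identity directly from the defining formula \eqref{eqn:Phi}. Since $\Phi$ is expressed entirely in terms of the quaternion product $\v q$ and its real/imaginary parts, and since the map $\v \mapsto \v q$ is $\R$-linear in $\v$, replacing $\v$ by $-\v$ just changes the sign of every term that contains $\v$.

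Concretely, I would substitute $-\v$ into the four entries of the matrix defining $\Phi(\v,q)$. Writing $M(\v,q)$ for that matrix, one computes
\begin{align*}
\Re((-\v)q) &= -\Re(\v q),\\
(-\v)+\Im((-\v)q) &= -\bigl(\v+\Im(\v q)\bigr),\\
(-\v)-\Im((-\v)q) &= -\bigl(\v-\Im(\v q)\bigr),\\
-\Re((-\v)q) &= \Re(\v q),
\end{align*}
so every entry is negated: $M(-\v,q)=-M(\v,q)$. Taking $\R^+$-orbits gives $\Phi(-\v,q)=\R^+(-\m)=[-\m]$, as claimed.

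The only conceptual point worth flagging, rather than an obstacle, is that $\tQo$ is the \emph{spherized} null cone, i.e.\ the quotient by $\R^+$ and not $\R^*$; hence $[\m]$ and $[-\m]$ are genuinely distinct points of $\tQo$, and the lemma is saying that the two lifts $(\v,q)$ and $(-\v,q)$ of the same point of $\Qr=S^2\times\SO_3$ (well, same first coordinate up to sign) are sent by $\Phi$ to the two distinct points of $\tQo$ lying over a common point of $\Qo$. Nothing else is needed beyond the one-line substitution; there is no serious obstacle.
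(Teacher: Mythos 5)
Your proof is correct and matches the paper's approach exactly: the paper simply states the lemma is immediate from formula~\eqref{eqn:Phi}, and you have spelled out that one-line substitution, correctly noting that linearity in $\v$ negates every matrix entry and that the $\R^+$-quotient (rather than $\R^*$) keeps $[\m]$ and $[-\m]$ distinct.
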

In other words, $\Phi$ is $\Z_2$-equivariant, with respect to the actions $(-1) \times 1$, $- 1$ on $\tQr, \tQo$ (respectively). 
The proof is immediate from formula \eqref{eqn:Phi}. \qed

\mn

Now we proceed to the bijective correspondence indicated in the statement of Theorem \ref{thm:main}. We start with  an equivalence class $[\Gamma]$ of non-degenerate closed spherical $n$-gons with trivial lifted rolling monodromy and an element $q\in S^3$. We  associate with  $([\Gamma], q)$  a non-degenerate closed horizontal $n$-gon in $\Qo$, as follows. $[\Gamma]$ is given by the projected vertices $[\v_1], \ldots, [\v_n]\in S^2/\pm 1$ (see the  first paragraph after Definition \ref{def:equiv}).  We pick simple edges between succesive vertices (there are two possibilities for each successive pair) and obtain a closed polygonal path in $S^2/\pm 1$. This path is then lifted to a spherical $n$-gon  $\Gamma$ in $S^2$ (there are two such lifts), with vertices $\v_1, \v_2, \ldots,$ closed up to sign, i.e., $\v_{i+n}=\pm\v_i$ (same sign for all $i$). Let $\mu_i\in S^3$ be the lifted rolling monodromy along the edge of $\Gamma$ from $\v_i$ to $\v_{i+1}$. By Lemma \ref{lemma:lifts}, $\mu_i$ depends only on $[\v_i], [\v_{i+1}]$. 

Next we lift $\Gamma$ horizontally to $\tQr$, starting at $(\v_1,q)$, and obtain a horizontal polygon, with vertices $(\v_1, q_1),  \ldots (\v_n, q_n)$, where $q_1:=q, q_2:=\mu_1 q_1,\ldots, q_n=\mu_{n-1}q_{n-1}.$ This polygon is closed up to sign of $\v_i$, i.e., $(\v_{i+n}, q_{i+n})=(\pm \v_i, q_i)$ (same sign for all $i$, depending on the choices made along the way). 																						

This horizontal polygon is mapped by $\Phi$ to a  horizontal polygon in $\tQo$ with vertices $[\m_i]:=\Phi[(\v_i,q_i)],$ $i=1, \ldots, n.$ By Lemma \ref{lemma:Phi}, this polygon is closed up to sign, i.e., $[\m_{i+n}]=[\pm\m_i]$ (same sign for all $i$). The projection of this polygon to  $\Qo$ is thus closed and horizontal, and its vertices do not depend on the choices made along the way (the edges do depend on the initial choice of edges in $S^2/\pm 1$, but a dancing pair of polygons is specified by its vertices alone). See figure \ref{fig:corresp}.

\begin{figure}[h!]
\centering
\def\svgwidth{.9\textwidth}\import{figures/}{corresp1.pdf_tex}
\caption{\ }
\label{fig:corresp}
\end{figure}

 If we add the genericity condition to $([\Gamma], q)$, then the $n$ vertices in $\Qo$ lie in $\Qo_*=\iota(\Qd)$, i.e., correspond to a dancing pair of polygons as per Theorem  \ref{thm:horpol}. 

This correspondence is clearly invertible. Starting with a dancing pair of closed polygons, we first lift it to a horizontal polygon in $\Qd$  (by Theorem \ref{thm:horpol}), then map it to  a closed horizontal polygon in $\Qo_*$ (by step (1) of the proof of Theorem \ref{thm:diag}, see Section \ref{sec:diag}),  lift to a  horizontal polygon in  $\tQo_*$, then map by $\Phi^{-1}$ to a horizontal polygon in $\tQr_*$, with vertices 
$(\v_1, q_1), \ldots, (\v_n, q_n)$, closed up to sign of $\v_i$. It follows that the spherical polygon $\Gamma$ with vertices  $\v_1, \ldots, \v_n$  has trivial lifted rolling monodromy and that $([\Gamma], q_1)$ is generic. \qed

\subsection{Theorem  \ref{thm:intro1}}

From the correspondence of Theorem \ref{thm:horpol} (proved below in Section \ref{sec:horpol}), it is enough to  show that (1)  there exist non-degenerate horizontal $n$-gons in $\Qd$  for every $n\geq 6$, and (2) every horizontal $n$-gon in $\Qd$ for $n\leq 5$ is degenerate. The  first statement follows from Corollary \ref{cor:regularpolys} and Theorem \ref{thm:main}. We procced to prove the  second statement.

Let $q_1, q_2, q_3\in\Qd$ be the vertices of an open  non-degenerate  horizontal 3-gon; i.e., the three  points are distinct and the lines $q_1q_2$ and $q_2q_3$ are horizontal and distinct. We shall prove that 
\begin{enumerate}[{\rm (1)}]
\item $q_1q_3$ is not  horizontal; therefore, there are no non-degenerate horizontal triangles.
\item If $q_4\in\Qd$ is such that $q_1q_4$ and $q_3q_4$ are horizontal, then $q_4=q_2$; therefore, there are no non-degenerate horizontal quadrilaterals.
\item If $q_4, q_5\in\Qd$ are such that $q_3q_4$, $q_4q_5$ and $q_5q_1$ are horizontal, then either $q_4=q_2$ or $q_5=q_2$; in either case, the pentagon will be a degenerate one.
\end{enumerate}

We proceed with proofs of (1) and (3). The proof of (2) is similar to (3), slightly simpler, and is omitted. 

\mn\paragraph{Proof of (1).} Suppose $q_1,q_2,q_3$ are 3 distinct points in $\Qd$ forming a horizontal triangle. We show that it is degenerate, i.e., the 3 points are collinear. Let $q_i=(\A_i,\b_i)$,  $i=1,2,3$.  Then, by Definition \ref{def:hor}, 
\be\label{eq:hors}
\b_i-\b_j=\A_j\times \A_i, \ i,j=1,2,3.
\ee

We can assume  that $\A_1, \A_2, \A_3$ are distinct:
 if  $\A_i=\A_j$, $i\neq j$,  then, by Equation \eqref{eq:hors}, $\b_i-\b_j=\A_j\times \A_i=0$, hence $q_i=q_j$, a contradiction.  

Next, again by \eqref{eq:hors}, 
\begin{align*}
(\A_1-\A_3)\times (\A_2-\A_3)&=\A_1\times\A_2-\A_1\times\A_3-\A_3\times\A_2=\\
&=
(\b_2-\b_1)-(\b_3-\b_1)-(\b_2-\b_3)=0,
\end{align*}
so $\A_1-\A_3=\lambda(\A_2-\A_3)$ for some $\lambda\neq 0.$ Taking the cross product with $\A_3$,  $\A_1\times \A_3=\lambda(\A_2\times\A_3)$, which implies, by \eqref{eq:hors}, $\b_1-\b_3=\lambda(\b_2-\b_3)$, so $q_1-q_3=\lambda(q_2-q_3)$, hence  $q_1, q_2, q_3$ are collinear. 

\mn\paragraph{Proof of (3).} Let $\e_1, \e_2, \e_3$ be the standard basis of $\R^3$ and $\e^1, \e^2, \e^3$ the dual basis. Since $\SL_3(\R)$ acts transitively on $\Qd$ preserving horizontality,  we may assume, without loss of generality, that $q_2=(\e_1,\e^1).$ The  isotropy group at this point  is 
$$\left\{ \begin{pmatrix} 1 & 0\\
			  0 & A\end{pmatrix} \left| \right. A\in \SL_2(\R)\right\},$$
acting  on the affine plane $ \Dd_{q_2}=\mbox{Span}\{ (\e_2, \e^3), (\e_3, -\e^2)\}$. Hence, without loss of generality, we may assume that \begin{align*}
q_1&=q_2+(\e_2, \e^3)=(\e_1+\e_2,\e^1+\e^3), \\  
q_3&=q_2+a(\e_3, -\e^2)=(\e_1+a\e_3,\e^1-a\e^2),
\end{align*}
 for some $a\neq 0$.

In a similar fashion, we see that any point $q_5\in\Qd$ that may be joined horizontally to $q_1$ must be of the form 
$$q_5=q_1+\left(x\e_1+y\e_2-x\e_3, -x\e^1+x\e^2+(y-x)\e^3\right),$$ for some choice of $x,y\in\R$. Likewise, any point $q_4\in\Qd$ that may be joined horizontally to $q_3$ must be of the form 
$$q_4=q_3+(ab\e_1+b\e_2+c\e_3,-ab\e^1+(a^2b-c)\e^2+b\e^3),$$ for some $b,c\in\R$.

Assuming that $q_4q_5$ is a horizontal line, the horizontality equation $\b_5-\b_4=\A_4\times \A_5$ gives the following system
$$
\left\{ \begin{array}{rcl} (a+x)b+(1+y)c&=&x-a(1+y)\\
			     a(a+x)b+xc&=&ax\\
			     b\left[ x-a(1+y)\right]&=&x
			     \end{array}\right. 	.
$$Solving for $b,c$ in terms of $x,y$ in the first two equations and then using the third, implies that:
\begin{itemize}
\item if $x\neq 0$ $\Rightarrow$ $a=0$, i.e., $q_3=q_2$, a contradiction;
\item if $x=0$ $\Rightarrow$ $b=0$ $\Rightarrow$ either $c=-a$ (which implies $q_4=q_2$) or $y=-1$ (which implies $q_5=q_2$). 
\end{itemize}		
This concludes the proof of (3). 
\qed	  


\appendix

\section{$\G$-symmetry}\label{app:sym}

The definition  of the  \CED\ using split octonions $\tO$ (see Section \ref{ss:oct}) gives rise to an action of the automorphism group of this algebra, $\G:={\rm Aut}(\tO)$, as the (local)  symmetry group of the various models of this distribution that appear in this article.  In this appendix we collect some  explicit formulas for the associated infinitesimal action. Most of the material here appeared before, e.g., in  \cite{BM, BHN, BH}. 

\subsection{$\g_2$} 

The  Lie algebra of $\G$ is $\g_2\subset \so_{4,3}\subset\so_{4,4}$, where $\so_{4,4}$ is the Lie algebra of the subgroup of $\GL(\tO)$ preserving the  inner product on $\tO$ of Equation \eqref{eq:ip}, and $\so_{4,3}$ is the subalgebra preserving the spltting $\tO=\Re(\tO)\oplus\Im(\tO)$.

Now  $\g_2$  is the algebra of {\em derivations} of $\tO$,  i.e., maps $D\in \End(\tO)$ satisfying 
$$D(\m \m')=(D\m) \m'+\m (D\m')$$
for all $\m, \m'\in \tO.$ It preserves the decomposition $\tO=\Re(\tO)\oplus\Im(\tO)$ and acts trivially on the  first summand. On the  second summand we have the infinitesimal action of  $\g_2$, defined as follows. 

Consider  the map  $\rho:\sl_3(\R)\times\R^3\times(\R^3)^*\to\End(\Im(\tO))$, 
\be\label{eq:deriv}
 \rho(T, \bQ,\bp):
\left(\begin{matrix} x& \A \\ \b& -x
\end{matrix}\right)
\mapsto 
\left(\begin{matrix}
 \bp\A+\b\bQ & T\A-\bp\times \b+2\bQ x\\ 
\bQ\times \A-\b T+2\bp x& - \bp\A- \b\bQ
 \end{matrix}\right),
\ee
where $x\in\R$,  $\A,\bQ\in\R^3$ (column vectors), $\b, \bp \in \Rts$ (row vectors) and $T\in\sl_3(\R)$ (traceless $3\times 3$ real matrices).   

Now $\rho$ is  clearly an injective  linear map, hence its image is a 14-dimensional subspace  of $\End(\Im(\tO))$. One can check that 
$[\rho(T_1,\bQ_1,\bp_1),\rho(T_2,\bQ_2,\bp_2)]=\rho(T_3,\bQ_3,\bp_3),$
where
\begin{align*}
\label{eq:lie}
T_3&=[T_1,T_2]+3\left(\bQ_1\bp_2-\bQ_2\bp_1\right)+ \left(\bp_1\bQ_2-\bp_2\bQ_1\right){\rm I}_3,\\
\bQ_3&=T_1\bQ_2-T_2\bQ_1-2\bp_1\times \bp_2,\\
 \bp_3&=\bp_1T_2-\bp_2T_1+2\bQ_1\times \bQ_2.
\end{align*}
It follows that  the image of $\rho$ is  a 14-dimensional  Lie subalgebra of $\End(\Im(\tO))$. 

\begin{theorem}
The image of $\rho$ in $\End(\Im(\tO))$, extended to $\tO=\Re(\tO)\oplus\Im(\tO)$ by the $0$ action on $\Re(\tO)$, is the Lie algebra $\g_2\subset\End(\tO)$ of derivations of $\tO$. 
\end{theorem}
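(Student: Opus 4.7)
The strategy is a dimension count. The map $\rho$ is manifestly linear and injective: evaluating $\rho(T,\bQ,\bp)$ on $\left(\begin{smallmatrix}1&0\\0&-1\end{smallmatrix}\right)\in\Im(\tO)$ produces $\left(\begin{smallmatrix}0&2\bQ\\2\bp&0\end{smallmatrix}\right)$, which isolates $\bQ$ and $\bp$; then $T$ is recovered from the action on $\left(\begin{smallmatrix}0&\A\\0&0\end{smallmatrix}\right)$, where $\rho(T,0,0)$ simply sends this to $\left(\begin{smallmatrix}0&T\A\\0&0\end{smallmatrix}\right)$. Hence $\mathrm{im}(\rho)$ is $14$-dimensional. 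Combined with the classical fact $\dim\g_2=14$ already recalled in the paper, the theorem reduces to proving the containment $\mathrm{im}(\rho)\subset\mathrm{Der}(\tO)$.

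To prove the containment I would verify the Leibniz rule
\[
D(\m\zm\m') = (D\m)\zm\m' + \m\zm(D\m'),\qquad D:=\rho(T,\bQ,\bp),
\]
directly from formula \eqref{eq:deriv}. Because every derivation automatically kills $\mathbf{1}$, the identity is trivially satisfied whenever either factor lies in $\Re(\tO)=\R\,\mathbf{1}$, so one may restrict to $\m,\m'\in\Im(\tO)$. By bilinearity in $(T,\bQ,\bp)$ the verification splits into three subcases. For $\rho(T,0,0)$, the induced infinitesimal action $(x,\A,\b)\mapsto(0,T\A,-\b T)$ exponentiates to the $\SL_3(\R)$-action $(\A,\b)\mapsto(g\A,\b g^{-1})$ singled out in the paper as the obvious symmetry of $(\Qd,\Dd)$; that it preserves octonion multiplication is a short check using $\det g=1$ and the fact that the vector products in \eqref{eq:prod} then transform as $(g\A_1)\times(g\A_2)=(\A_1\times\A_2)g^{-1}$, with a dual identity for the row-vector product, while the scalar pairings $\b\A$ are $\SL_3$-invariant by construction.

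The remaining two subcases, $\rho(0,\bQ,0)$ and $\rho(0,0,\bp)$, are the main obstacle and must be handled by a direct hand calculation: one unpacks both sides of the Leibniz rule into the four vector-matrix entries and reduces each one using the triple-product identity \eqref{eq:vi} together with the quadratic form $\langle\m,\m\rangle=xy+\b\A$. The two subcases are entirely parallel, since \eqref{eq:deriv} is symmetric under the natural column/row duality that interchanges the roles of $\bQ$ and $\bp$, so in practice only one needs to be written out in full. The computation is routine but tedious; no deeper structural input about $\g_2$ is required beyond its known dimension, and once the containment $\mathrm{im}(\rho)\subset\mathrm{Der}(\tO)$ is secured, the equality follows immediately from the dimension count.
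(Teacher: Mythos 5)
Your strategy is sound and takes a genuinely different route from the paper, which simply cites Jacobson \cite[page 143]{J} for this theorem and offers no proof of its own; what the paper does establish beforehand is that the bracket formula \eqref{eq:lie} closes, so that $\mathrm{im}(\rho)$ is a $14$-dimensional Lie subalgebra of $\End(\Im(\tO))$, and it leaves the identification with $\mathrm{Der}(\tO)$ to the reference. Your reduction is correct: $\rho$ is injective (your evaluation at $\left(\begin{smallmatrix}1&0\\0&-1\end{smallmatrix}\right)$ and $\left(\begin{smallmatrix}0&\A\\0&0\end{smallmatrix}\right)$ is an efficient way to see it), $\dim\mathrm{Der}(\tO)=14$ is standing background both here and in the paper, so equality follows from the containment $\mathrm{im}(\rho)\subset\mathrm{Der}(\tO)$; and the $\sl_3$ piece exponentiates to $(\A,\b)\mapsto(g\A,\b g^{-1})$, which preserves Zorn multiplication by exactly the determinant identities you quote. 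Your ``column/row duality'' can in fact be made precise: the linear involution $\left(\begin{smallmatrix}x&\A\\\b&y\end{smallmatrix}\right)\mapsto\left(\begin{smallmatrix}y&\b^t\\\A^t&x\end{smallmatrix}\right)$ (whose restriction to $\Im(\tO)$ is the map $\ttau$ that appears in the paper's Appendix A.3) is an algebra automorphism of $\tO$, and conjugating by it sends $\rho(0,\bQ,0)$ to $-\rho(0,0,\bQ)$, so verifying the Leibniz rule for one $\bQ$-type generator truly does dispose of the $\bp$-type case. What remains unwritten is the single honest computation — checking the Leibniz rule for $\rho(0,\bQ,0)$ on a pair of imaginary octonions using the triple-product identity, including the point you leave implicit that the real parts of $(D\m)\zm\m'+\m\zm(D\m')$ cancel (equivalently, that $\rho(0,\bQ,0)$ is skew for $\langle\,,\rangle$). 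That is the substantive step, and until it is carried out your argument is an outline rather than a proof; but the outline is correct, and it yields a more self-contained treatment than the paper's citation.
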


 See for example \cite[page 143]{J}. 
 
\subsection{The $\g_2$-action on $\tQr$ and $\Qd$}
 \label{app:emb}
 
 The $\G$-action on $\Im(\tO)$, given (infinitesimally) by Equation \eqref{eq:deriv},  induces actions on $\tQo$, $\Qo$,  and a local action on $\Qd$, then via diagram \eqref{eq:diag} an action on $\tQr$.  The latter action does {\em not} descend to $\Qr$, even infinitesimally (see next subsection).  The next two propositions give explicit formulas for the infinitesimal actions  on $\Qd, \tQr$. We start with $\Qd$. 

\begin{prop}
For each $(T, \bQ, \bp)\in \sl_3(\R)\times\Rt\times\Rts$, the infinitesimal action of $\rho(T, \bQ, \bp)\in \g_2$ on $\Qd\subset\Rtt$ is given by the vector field $f\partial_\A+g\partial_\b$ on $\Rtt$,  where 
$$\begin{array}{lcl}
 f(\A,\b)&=&2\bQ + T\A  - \bp\times\b-(\b\bQ+\bp \A)\A, 
\\ 
g(\A,\b)&=&2\bp-\b T  + \bQ\times\A-(\b\bQ+\bp\A)\b.
\end{array}
$$
\end{prop}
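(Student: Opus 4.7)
The plan is to pull back the infinitesimal $\g_2$-action on $\Im(\tO)$, given by formula \eqref{eq:deriv}, through the affine chart $\iota : \Qd \hookrightarrow \Qo$, $(\A,\b) \mapsto [\m_0]$ with $\m_0 = \left(\begin{smallmatrix} 1 & \A \\ \b & -1 \end{smallmatrix}\right)$. Recall from the proof of step (1) of Theorem \ref{thm:diag} that $\iota(\Qd) \subset \Qo$ is precisely the open set where the $(1,1)$-entry $x$ is nonzero, and that $\iota$ intertwines $\Dd$ and $\Do$. Since $\rho(T,\bQ,\bp)$ preserves $\Im(\tO)$ and (by the derivation property) the null cone, it induces a well-defined vector field on $\Qo$, which I will read off in the affine chart.

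First I would apply $\rho(T,\bQ,\bp)$ to $\m_0$ using \eqref{eq:deriv} with $x=1$, obtaining
\begin{equation*}
\rho(T,\bQ,\bp)\,\m_0 \;=\; \begin{pmatrix} \bp\A + \b\bQ & T\A - \bp\times\b + 2\bQ \\ \bQ\times\A - \b T + 2\bp & -(\bp\A + \b\bQ)\end{pmatrix}.
\end{equation*}
The $(1,1)$ entry is generically nonzero, so this tangent vector to $\Im(\tO)$ at $\m_0$ does not lie in the affine slice $\{x=1\}$; I must reproject it. Consider the curve $\m_\epsilon := \m_0 + \epsilon\, \rho(T,\bQ,\bp)\,\m_0$ in the null cone. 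Its $(1,1)$ entry is $1 + \epsilon\,(\bp\A + \b\bQ)$, so the rescaled representative $\tilde\m_\epsilon := \m_\epsilon /(1+\epsilon(\bp\A + \b\bQ))$ lies in the chart $x=1$ and agrees with $[\m_\epsilon] \in \Qo$.

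Next I would expand $\tilde\m_\epsilon$ to first order in $\epsilon$, using $(1+\epsilon(\bp\A+\b\bQ))^{-1} = 1 - \epsilon(\bp\A+\b\bQ) + O(\epsilon^2)$. The $(1,2)$ and $(2,1)$ entries of $\tilde\m_\epsilon$ become
\begin{align*}
\A + \epsilon\bigl[\,2\bQ + T\A - \bp\times\b - (\b\bQ + \bp\A)\A\,\bigr] + O(\epsilon^2),\\
\b + \epsilon\bigl[\,2\bp - \b T + \bQ\times\A - (\b\bQ + \bp\A)\b\,\bigr] + O(\epsilon^2),
\end{align*}
and the coefficients of $\epsilon$ are exactly $f(\A,\b)$ and $g(\A,\b)$ in the statement.

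Finally, one must check tangency of $f\,\partial_\A + g\,\partial_\b$ to $\Qd = \{\b\A = 1\}$, i.e. that $\b\,f + g\,\A = 0$; this is automatic because $\rho(T,\bQ,\bp)$ preserves the null cone and the rescaling preserves $x=1$, but it can also be verified directly using $\b(\bp\times\b) = 0 = (\bQ\times\A)\A$ and $\b(T\A) = (\b T)\A$. There is no genuine obstacle: the only subtlety is bookkeeping the projective rescaling, which is precisely what produces the quadratic correction terms $(\b\bQ + \bp\A)\A$ and $(\b\bQ + \bp\A)\b$ absent from the linear action on $\Im(\tO)$.
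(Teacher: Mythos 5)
Your proposal is correct and computes the same thing the paper does, via the same underlying idea (projectivizing the linear $\g_2$-action on $\Im(\tO)$ and reading it off in the affine chart $x=1$); the paper just packages the first-order rescaling you carry out explicitly into a general formula $\left[c + (d-a)q - (bq)q\right]\partial_q$ for the induced vector field of $(ax+bq)\partial_x + (cx+dq)\partial_q$ in the chart $q\mapsto[1,q]$, then identifies $a=0$, $b=\bp\A+\b\bQ$, $c=2(\bQ,\bp)$, $d=(T\A-\bp\times\b,\,-\b T+\bQ\times\A)$. Your curve-and-rescale expansion is a direct derivation of that same formula, so the two proofs are essentially identical in content.
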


\newcommand{\bu}{{\bf u}}

\begin{proof}This appeared in \cite{BHN}, with somewhat different notation, so we will give it here again for completeness.

 We consider the  coordinates $(x,q)$ on $\Im(\tO)$, where $q=(\A,\b)\in\Rtt$. 
A linear vector field on $\Im(\tO)$, such as the one given by Equation \eqref{eq:deriv},  can be written  as  
$$\left(ax+  b q\right)\partial_x +\left( cx +dq\right)\partial_q, 
$$
where $a\in\R, b \in(\Rtt)^*,$ $ c\in\Rtt,$ $ d\in\End(\Rtt).$
It induces a vector field on $\P(\Im(\tO))$, given in the affine chart $\Rtt\to \P(\Im(\tO))$, $q\to [1,q]$, by 
\be\label{eq:pe}
\left[ c+(d-a)u- (bq)q\right]\partial_q.
\ee

For the linear vector field given by Equation \eqref{eq:deriv} one has
\begin{align*}
a&=0, &b(\A,\b)&=\bp\A+\b\bQ,\\
c&=2(\bQ, \bp),&d(\A, \b)&=(T\A-\bp\times\b, -\b T+\bQ\times \A).
\end{align*}
Using these in Equation \eqref{eq:pe} gives the stated formulas. 
\end{proof}

The formulas for the  $\g_2$-action  on $\tQr$ are more complicated. We shall treat a special representative  case with enough detail so that the interested reader can easily derive the formulas of the general case. 

\begin{prop} Let $\Xo$ be the linear vector field on $\Im(\tO)$ given by formula \eqref{eq:deriv} with  $\bQ=\bp=0$ and  $T=T^t$. The induced  infinitesimal action on $\tQr=S^2\times S^3\subset \Im(\H)\times \H$  is given by the vector field 
$$\Xr=f\partial_\v + g\partial_\w+h\partial_s,
$$ where $\v\in\Im(\H),\ s+\w\in\H$ are the standard Euclidean coordinates on $\Im(\H)\times\H$, and 
\begin{align*}
f&=T(s\v+\v\times\w)-\left[T(s\v +\v\times\w)\cdot \v\right]\v,\\
g &=\left[(s^2-1)\v+s(\v\times\w)\right] \times T\v +(s\v+\v\times\w)\times T(\v\times\w) \\
&\qquad\qquad\qquad +(\v\cdot \w)T(s\v +\v\times\w)
-2\left( T(s\v +\v\times\w)\cdot \v\right) \w, \\
h&=T\left(\v+s(\v\times \w)\right)\cdot\v -sT(s\v+\v\times\w)\cdot \v +T(\v\times\w)\cdot (\v\times\w).
\end{align*}
\end{prop}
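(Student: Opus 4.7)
\smallskip
\noindent\emph{Proof proposal.} The plan is to compute the push-forward of $\Xo$ through $\Phi^{-1}\colon\tQo\to\tQr$, using the factorization $\Phi=\pi\circ j\circ f$ established in the proof of Theorem~\ref{thm:diag}. I would integrate $\Xo$ to a local flow on $\Im(\tO)$, read off the resulting curve $(\v(t),s(t)+\w(t))$ in $\tQr$, and differentiate at $t=0$.

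With $\bQ=\bp=0$ and $T=T^t$ traceless, Equation~\eqref{eq:deriv} reduces $\Xo$ to the linear map sending $\bigl(\begin{smallmatrix}x&\A\\ \b&-x\end{smallmatrix}\bigr)$ to $\bigl(\begin{smallmatrix}0&T\A\\ -\b T&0\end{smallmatrix}\bigr)$, whose flow through the point $j\circ f(\v,s+\w)$ is
\[
\m(t)=\left(\begin{matrix}x_0 & e^{tT}\A_0 \\ e^{-tT}\b_0 & -x_0 \end{matrix}\right),\quad \A_0=(1+s)\v+\v\times\w,\ \b_0=(1-s)\v-\v\times\w,\ x_0=-\v\cdot\w.
\]
A null matrix belongs to $j(\tQr)$ iff $|\A+\b|=2$, so the unique positive rescaling returning $\R^+\m(t)$ to $j(\tQr)$ is $\lambda(t):=2/|\A(t)+\b(t)|$. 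Differentiating $\lambda^2|\A+\b|^2=4$ at $t=0$, and using $\A_0+\b_0=2\v$ together with $T(\A_0-\b_0)=2T(s\v+\v\times\w)$, one obtains
\[
\dot\lambda(0)=-T(s\v+\v\times\w)\cdot\v.
\]

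Inverting $j$ (linear) and $f^{-1}\colon(\v,X+U)\mapsto(\v,-\v(X+U))$ then gives the explicit curve
\[
\v(t)=\tfrac{\lambda(t)}{2}\bigl(\A(t)+\b(t)\bigr),\quad s(t)=\v(t)\cdot U(t),\quad \w(t)=-\lambda(t)\,x_0\,\v(t)-\v(t)\times U(t),
\]
with $U(t):=\tfrac{\lambda(t)}{2}(\A(t)-\b(t))$. Differentiating the first expression at $t=0$ yields at once $\dot\v(0)=\dot\lambda(0)\v+T(s\v+\v\times\w)$, which is the stated $f$. A short expansion for $\dot s(0)=\dot\v(0)\cdot U_0+\v\cdot\dot U(0)$---in which the terms linear in $s$ cancel thanks to $T^t=T$---produces $(1-s^2)(T\v\cdot\v)+T(\v\times\w)\cdot(\v\times\w)$, which matches $h$ after rewriting $(1-s^2)T\v\cdot\v=T(\v+s(\v\times\w))\cdot\v-sT(s\v+\v\times\w)\cdot\v$.

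The hard part is the formula for $g=\dot\w(0)$. Expanding $\dot\w=-\dot x\,\v-x\dot\v-\dot\v\times U_0-\v\times\dot U(0)$ generates roughly a dozen separate vector terms; I would collect them by repeated use of $T^t=T$, the triple-product identity $\a\times(\b\times\c)=(\a\cdot\c)\b-(\a\cdot\b)\c$, and $|\v|^2=1$, folding everything into the four summands appearing in the statement of $g$. The bookkeeping is lengthy but entirely mechanical and requires no further geometric input. This also clarifies the restriction to $\bQ=\bp=0,\ T=T^t$: the remaining families of elements of $\g_2$ in \eqref{eq:deriv} are treated by the same recipe, and the symmetry of $T$ is what keeps the present case compact enough to display in closed form.
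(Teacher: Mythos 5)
Your proposal is correct and takes essentially the same route as the paper: both factor $\Phi$ through the explicit linear chart and its inverse, normalize the $\mathbb{R}^+$-quotient by restricting to the slice $|\A+\b|=2$, and read off the pushed-forward vector field by differentiation; your flow-plus-rescaling $\lambda(t)$ is exactly dual to the paper's decomposition $X^{\tt oct}=X^{\tt oct}_\|+\lambda E$ with the Euler field $E$ (note your $\dot\lambda(0)$ is $-\lambda$ in their notation). Both you and the paper leave the final algebraic simplification of the $g$-component as a mechanical exercise, so the level of completeness is comparable.
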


\begin{proof}

The linear vector field $\Xo$  on $\Im(\tO)$ 
given by  Equation \eqref{eq:deriv} induces, by `spherization' ($\R^+$-quotient), a vector field on $\tQo$ and a vector field $\Xr$ on $\tQr$ via the diffeomorphism  $\Phi:\tQr\to \tQo$ of diagram \eqref{eq:diag}. 
Now $\Phi$  factors as a composition $\tQr\to
 C\setminus 0\xrightarrow{\pi}\tQo$ (recall that $C$ is the null cone in $\Im(\tO)$; see Equation \eqref{eq:comp}). The map $\tQr\to
 C\setminus 0$ is the restriction to $S^2\times S^3$ of  the map $\varphi:\Im(\H)\times\H\to\Im(\tO)$  given by Equations \eqref{eq:comp}-\eqref{eq:iso},  $(\v,  s+\w)\mapsto\left(\begin{matrix} x& \A \\ \b& -x
\end{matrix}\right)$,  where 
\begin{align}\label{eq:Abx}
\begin{split}\A&=(1+s)\v +\v\times\w,\\
\b&=(1-s)\v -\v\times\w,\\
x&=-\v\cdot\w. 
\end{split}
\end{align}
The inverse map is given by $\psi:\Im(\tO)\to\Im(\H)\times\H$,  
\begin{align}\label{eq:vws}
\begin{split}
&\v={\A+\b\over 2}, \\
&\w={\A\times \b\over 2} -x\v, \\
&s={1\over 4}(|\A|^2-|\b|^2),
\end{split}
\end{align}
with derivative
\begin{align*}
&\psi^*\left(\d\v\right)={1\over 2}\left(\d\A+\d\b\right),\\
&\psi^*(\d\w)={1\over 2}\left[\A\times \d\b -\b\times \d\A
-x(\d\A+\d\b)-(\A+\b)\d x\right],\\
&\psi^*(\d s)={1\over 2}\left(\A\cdot\d\A- \b\cdot\d \b\right).
\end{align*}
That is, $\varphi$ and $\psi$, when restricted to $S^2\times S^3$ and $M:=\varphi(S^2\times S^3)\subset C$ (respectively), are inverse maps: 
$$\begin{array}{ccc}
\Im(\H)\times\H& \begin{array}{c}\xrightarrow{\qquad \varphi \qquad}\\
\xleftarrow[\qquad \psi \qquad]{}\end{array}&
\Im(\tO)\\
\cup&&\cup\\
&&\\
S^2\times S^3&\cong&M
\end{array}
$$

Next let 
$$\Xo=\alpha\partial_\A+\beta\partial_\b+\gamma\partial_x, 
$$
and let   
\be\label{eq:euler}E:=\A\partial_\A+\b\partial_\b+x\partial_x
\ee 
be the Euler field on $\Im(\tO)$, which is  tangent to $C$. At each point of $M\subset C$, since $\Xo$ is tangent to $C$,  one can decompose uniquely $$\Xo=\Xo_\|+\lambda E,$$ where $\Xo_\|$ is tangent $M$ and $\lambda\in\R$. 
Then $\Xr=\psi_*(\Xo_\|).$

Next we note that, by Equations \eqref{eq:Abx},  $M$ is given in $C$ by the equation $|\A+\b|^2=4$, so $TM$ is the kernel of $(\A+\b)\cdot(\d\A+\d\b)$ restricted to $TC$. It follows that 
\begin{align*}0=(\A+\b)\cdot(\d\A+\d\b)(\Xo-\lambda E)&=
(\A+\b)\cdot (\alpha+\beta-\lambda(\A+\b))\\
&=(\A+\b)\cdot (\alpha+\beta)-4\lambda,
\end{align*}
thus 
$$\lambda={1\over 4}(\A+\b)\cdot(\alpha+\beta)={\v\over 2}\cdot (\alpha+\beta).
$$
It follows that 
\begin{align*}
f&=\d\v(\Xr)=\d\v\left(\psi_*\Xo_\|\right)=(\psi^*\d\v)\Xo_\|\\
&={1\over 2}\left(\d\A+\d \b\right)\left(\Xo-\lambda E\right)=
{1\over 2}\left[\alpha+\beta-\left((\alpha+\beta)\cdot\v\right)\v\right],
\end{align*}
and similarly
\begin{align*}
g&=\left(\psi^*\d\w\right)\Xo_\|\\
&=\left[{1\over 2}\left(\A\times \d\b -\b\times \d\A\right)
-x\d \v-\v\d x\right]\left(\Xo-\lambda E\right)\\
&={1\over 2}\left(\A\times\beta-\b\times\alpha\right)-{x\over 2}\left[\alpha+\beta-\left((\alpha+\beta)\cdot\v\right)\v\right]\\
&\qquad-\left({\v\over 2}\cdot(\alpha+\beta)\right)\left[\A\times\b-x\left(\A+\b\right)\right]-\gamma\v, \\
h&=\left(\psi^*\d s\right)\Xo_\|
={1\over 2}\left(\A\cdot\d\A- \b\cdot\d \b\right)\left(\Xo-\lambda E\right)\\
&={1\over 2}\left[\A\cdot\alpha-\b\cdot\beta-\left(\v\cdot(\alpha+\beta)\right)\left(|\A|^2-|\b|^2\right)\right].
\end{align*}
\mn 

Now we apply the above to the vector field $\Xo$ on  $\Im(\tO)$ given by  Equation \eqref{eq:deriv}, with $T=T^t,$ $\bQ=0,$ $\bp=0.$ Then 
$$\alpha=T\A, \ \beta=-T\b,\ \gamma=0.
$$
Using these values in the above expressions for $f,g,h$, we obtain, after some simplification, the stated formulas. 
\end{proof}

\subsection{The $\g_2$-action on $\tQr$ does not descend to $\Qr$}

 It was shown in \cite[\S7]{BM}  that the $\G$-action on $\tQr$ does not 
descend to $\Qr$. Here we show that  the infinitesimal $\g_2$-action does not descend either (a stronger result). Actually, we will determine the elements of $\g_2$ whose action on $\tQr$ descend to $\Qr$. 

\newcommand{\gk}{\mathfrak{k}}

\begin{prop}Consider the $2:1$ cover $\tQr\to \Qr$ and the vector field $\Xr$ on $\tQr$ induced by an element $\rho(T,\bQ, \bp)\in\g_2$, see Equation \eqref{eq:deriv}. Then $\Xr$ descends to a vector field on $\Qr$ if and only if $T^t=-T$, $\bQ=-\bp$. 
\end{prop}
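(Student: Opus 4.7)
The plan is to identify explicitly the deck involution of the double cover $\tQr\to\Qr$, transport it via $\Phi$ to an involution on $\tQo$ (and one level up, on $\Im(\tO)$), verify that this involution is an algebra automorphism of $\tO$ (hence lies in $\Gt$), and finally translate the descent condition for $\Xr$ into an $\Ad$-invariance condition inside $\g_2$ that can be solved directly using the explicit parametrization $\rho(T,\bQ,\bp)$ of \eqref{eq:deriv}.

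More precisely, the covering $\tQr=S^2\times S^3\to\Qr=S^2\times\SO_3$ is the quotient by $\sigma:(\v,q)\mapsto(\v,-q)$, so $\Xr$ descends iff $\sigma_*\Xr=\Xr$. First I would use the explicit description of $\Phi$ together with formulas \eqref{eq:Abx} to check that $\Phi\circ\sigma=\tilde\tau\circ\Phi$, where $\tilde\tau$ is the map on $\tQo$ induced by the linear involution
\[\tau:\Im(\tO)\to\Im(\tO),\qquad \left(\begin{matrix}x&\A\\ \b&-x\end{matrix}\right)\mapsto\left(\begin{matrix}-x&\b\\ \A&x\end{matrix}\right),\]
where the Euclidean identification $\R^3\cong\Rts$ that is already in force (see the remark after \eqref{eq:iso}) is used to swap the two off-diagonal entries.

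Next I would verify, by a direct calculation with Zorn's product and the compatibility of $vol$ with $vol^*$ under the Euclidean identification, that $\tau$ is an algebra automorphism of $\tO$, so $\tau\in\Gt$. Because the $\g_2$-action on $\Im(\tO)$ is linear, the induced vector field on $\tQo$ is $\tilde\tau$-invariant iff the corresponding derivation $D\in\g_2$ satisfies $\Ad(\tau)D=D$. Thus the descent problem is reduced to the algebraic question: compute $\Ad(\tau)\rho(T,\bQ,\bp)$ and find its fixed set. Plugging $\tau\m$ into the formula \eqref{eq:deriv} and then applying $\tau$ to the output yields, after rearranging,
\[\Ad(\tau)\rho(T,\bQ,\bp)=\rho(-T^t,-\bp,-\bQ),\]
so the fixed locus is exactly $\{T^t=-T,\ \bQ=-\bp\}$, as claimed.

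The main obstacle is the bookkeeping in Step~3: one has to track carefully the Euclidean identification that allows $\A$ (a column vector) and $\b$ (a row vector) to be swapped, and to keep the signs straight in matching the top-right and bottom-left components. A useful internal consistency check is that the resulting $6$-dimensional subalgebra $\{T^t=-T,\ \bQ=-\bp\}\subset\g_2$ agrees precisely with the image of the Lie algebra of $K=S^3\times S^3$ under the embedding \eqref{eq:emb}, which is exactly the subalgebra whose descent to $\Qr$ is manifest from the formulas of Step~(c) in Section~\ref{sec:diag}.
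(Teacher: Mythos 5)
Your proposal is correct, and it takes a genuinely different route from the paper's. Both arguments start the same way: the deck involution $\sigma:(\v,q)\mapsto(\v,-q)$ of $\tQr\to\Qr$ is transported by $\Phi$ to the map induced on $\tQo$ by the linear involution $\tilde\tau:\left(\begin{smallmatrix}x&\A\\\b&-x\end{smallmatrix}\right)\mapsto\left(\begin{smallmatrix}-x&\b\\\A&x\end{smallmatrix}\right)$ of $\Im(\tO)$. After that they diverge. The paper does \emph{not} observe that $\tilde\tau$ lies in $\G$; instead it phrases the invariance condition as $\Xo(\tilde\tau\m)\equiv\tilde\tau_*(\Xo(\m))\ \mathrm{mod}\ E(\tilde\tau\m)$ for all $\m$ in the null cone ($E$ being the Euler field), and then extracts $T^t=-T$, $\bQ=-\bp$ by specializing to $\b=0$, $x=0$ and matching components. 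You instead check that $\tilde\tau$ (extended to $\tO$ by fixing $\Re(\tO)$, i.e.\ $\left(\begin{smallmatrix}x&\A\\\b&y\end{smallmatrix}\right)\mapsto\left(\begin{smallmatrix}y&\b\\\A&x\end{smallmatrix}\right)$ under the Euclidean identification) is a Zorn-multiplication automorphism, hence $\tilde\tau\in\G$; this converts the descent problem into a clean fixed-point computation $\Ad(\tilde\tau)D=D$ inside $\g_2$, and $\Ad(\tilde\tau)\rho(T,\bQ,\bp)=\rho(-T^t,-\bp,-\bQ)$ gives the answer at once. What your approach buys: you never have to deal with the ``$\mathrm{mod}\ E$'' ambiguity, and you get a conceptual reason (an inner automorphism of $\g_2$) for the symmetric shape of the answer. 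What it costs: two small extra justifications that you gloss over but should be made explicit --- (i) that $\tilde\tau$ really is an algebra automorphism (a short Zorn-product check using that $vol$ and $vol^*$ both become the standard cross product under the Euclidean identification), and (ii) that $D\mapsto$ (induced vector field on $\tQo$) is injective on $\g_2$ (so that ``$D$ and $\Ad(\tilde\tau)D$ induce the same field on $\tQo$'' upgrades to ``$D=\Ad(\tilde\tau)D$''); this follows from effectiveness of the $\G$-action on $\tQo$, or equivalently from the fact that no nonzero traceless linear field on $\Im(\tO)$ tangent to the null cone can be radial along the whole cone. Your consistency check against the image of $\so_3\oplus\so_3$ under \eqref{eq:emb} is also exactly the remark the paper makes immediately after its proof.
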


\begin{proof}Recall that $\tQr=S^2\times S^3$, $\Qr=S^2\times\SO_3$ and  $\tQr\to \Qr$ is the quotient by the antipodal map in the second factor. Let us denote this self-map of $\tQr$ by $\sigma$. The vector field $\Xr$ on $\tQr$ thus descends to $\Qr$ if and only if it is $\sigma$-invariant, $\sigma_*\Xr=\Xr.$

The diffeomorphism $\Phi:\tQr\to\tQo$ maps $\Xr$ to a vector field $\Phi_*\Xr$ on $\tQo$. Thus $\Xr$ is $\sigma$-invariant if and only if 
$\Phi_*\Xr$ is $\tau$-invariant, where $\tau=\Phi\circ\sigma\circ \Phi^{-1}.$

\newcommand{\ttau}{\widetilde{\tau}}
Using the definition of $\Phi$ (Equation \eqref{eqn:Phi}), one finds that $\tau$ is given by  a linear involution  of $\Im(\tO)$, 
$$
\ttau: \left(\begin{matrix} x& \A \\ \b& -x
\end{matrix}\right)\mapsto
\left(\begin{matrix} -x& \b \\ \A& x
\end{matrix}\right)
$$
Namely, if $\m\in C\setminus 0$ (a non-zero  null octonion) then $\tau\left([\m]\right)=[\widetilde\tau\m].$

Similarly, the vector field $\Phi_*\Xr$ on $\tQo$ is given by  the linear vector field $\Xo$ on $\Im(\tO)$ of Equation \eqref{eq:deriv} by first restricting $\Xo$ to the null cone $C\subset \Im(\tO)$ (this restriction makes sense since  $\Xo$ is tangent to $C\setminus 0$), then projecting via the $\R^+$-quotient $\pi:C\setminus 0\to \tQo$. 

Summarizing, the $\sigma$-invariance of $\Xr$ amounts to 
\be\label{eq:inva}
\Xo(\ttau\m)\equiv \ttau_*\left(\Xo(\m)\right) \ \mbox{mod } E(\ttau\m), \ \forall \m\in C,
\ee
where $E$ is the Euler vector field on $\Im(\tO)$ (see Equation \eqref{eq:euler}). 

Next recall from \eqref{eq:deriv}  that if $\m=\left(\begin{matrix} x& \A \\ \b& -x
\end{matrix}\right)\in \Im(\tO)$ then 
\begin{multline*}
\Xo(\m)=(T\A -\bp \times \b +2x\bQ )\partial_\A+ (\bQ\times\A -T^t\b + 2x\bp)\partial_\b \\
+ (\bp\cdot\A +\b\cdot\bQ )\partial_x,
\end{multline*}
hence  
\begin{multline*}
\Xo(\ttau\m)=(T\b -\bp \times \A -2x\bQ )\partial_\A+ (\bQ\times\b -T^t\A - 2x\bp)\partial_\b \\
+ (\bp\cdot\b +\A\cdot\bQ )\partial_x 
\end{multline*}
and  
\begin{multline*}
\ttau_\ast\left(\Xo(\m)\right)=(\bQ\times\A -T^t\b + 2x\bp)\partial_\A + (T\A -\bp \times \b +2x\bQ )\partial_\b 
\\ - (\bp\cdot\A +\b\cdot\bQ )\partial_x.
\end{multline*}
Hence Equation \eqref{eq:inva} is equivalent to 
$$\left(\begin{array}{c}
\bQ\times\A -T^t\b + 2x\bp\\
T\A -\bp \times \b +2x\bQ \\
-\bp\cdot\A -\b\cdot\bQ
\end{array}\right)
\equiv 
\left(\begin{array}{c}
T\b -\bp\times\A - 2x\bQ\\
\bQ\times\b -T^t\A - 2x\bp\\
\bp\cdot \b+\A \cdot \bQ
\end{array}\right)
\mbox{ mod } 
\left(\begin{array}{c}
\b\\
\A\\
-x
\end{array}\right)
$$
for all $\A,\b, x$ such that $\b\A=x^2.$ Setting $\b=0$, $x=0$ in the above equation, the  third component reads $(\bQ+\bp)\cdot \A=0$ for all $\A$, hence $\bQ=-\bp.$ The  second component then reads
$(T+T^t)\A=\lambda\A$ for all $\A$ for some $\lambda$ (that may depend on $\A$). Hence $T+T^t$ is a multiple of the identity. But $T$ is traceless, and so is $T+T^t$, hence $T+T^t=0$, as needed. 
\end{proof}

\begin{remark}
The subspace of $\g_2$ indicated in this proposition is isomorphic to $\so_3\oplus\so_3=\so_4$, the Lie algebra of a maximal compact subgroup $\SO_4\simeq K\subset\G$, which has an `obvious' action on $\Qr=S^2\times \SO_3$. In fact, the $\SO_4$-action descends to $\SO_3\times\SO_3$ (a $\Z_2$-quotient of $\SO_4$), $(g_1,g_2): (\v, g)\mapsto (g_1\v, g_1 g g_2^{-1}).$ The embedding $\SO_4\to \G,$ and the associated embedding $\so_3\oplus\so_3\to \g_2$, has appeared explicitly above  during the proof of Theorem 
\ref{thm:diag}; see Section \ref{sec:diag}, item (3), step (c), Equation \eqref{eq:emb}.
This  also appeared in \cite{BM}, \S2.2 and \S5. 
\end{remark}

\section{Coordinate  formulae for $\Dr$}\label{app:coord}

The configuration space $\Qr$ of rolling a sphere of radius $1/\rho$ on a stationary sphere of radius 1 is $S^2\times \SO_3$, where $\v\in S^2$ denotes the contact point of the two spheres and  $g\in\SO_3$ the orientation of the moving sphere with respect to  some fixed initial orientation. 

To write down the rolling distribution on $\Qr$ we use the coordinates $(\phi, \theta, \alpha, \beta, \gamma)$, where $\phi, \theta$ are the spherical coordinates on $S^2$, 
\begin{align*}
\v
&= \begin{pmatrix}
\sin\theta\cos\phi \\
\sin\theta \sin\phi \\
\cos\theta
\end{pmatrix}
\end{align*} 
and $\alpha,\beta, \gamma$
are   Euler angles  on $\SO_3$, 
\begin{align*}
g&= \begin{pmatrix}
\cos \gamma & -\sin \gamma & 0 \\
\sin \gamma & \cos \gamma & 0 \\
0 & 0 & 1
\end{pmatrix} 
\begin{pmatrix}
\cos \beta & 0 & \sin \beta \\
0 & 1 & 0 \\
-\sin \beta & 0 & \cos \beta
\end{pmatrix}
 \begin{pmatrix}
1 & 0 & 0 \\
0 & \cos \alpha & -\sin \alpha \\
0 & \sin \alpha & \cos \alpha
\end{pmatrix}\\
\end{align*}

The angular velocity vector of the moving sphere, about its center,  is 
$$
\boldsymbol{\omega}  =  \begin{pmatrix}
\dot{\alpha} \sin \beta \sin \gamma + \dot{\beta} \cos \gamma \\
\dot{\alpha} \sin \beta \cos \gamma - \dot{\beta} \sin \gamma \\
\dot{\alpha} \cos \beta + \dot{\gamma}
\end{pmatrix}
$$

The rolling distribution $\Dr$ on $\Qr$ is the rank 2 distribution  whose integral curves satisfy $(1+\rho)\dot\v=\boldsymbol{\omega}\times\v, \ \boldsymbol{\omega}\cdot\v=0
$ (see Definition \ref{def:rol}). 
Explicitly, 
\medskip

$\ (1+\rho)
(\dot{\theta} \cos \theta \cos \phi - \dot{\phi} \sin \theta \sin \phi)-
(\dot{\alpha} \sin \beta \sin \gamma + \dot{\beta} \cos \gamma) \sin \theta \cos \phi$\\
\rightline{$+(\dot{\alpha} \sin \beta \cos \gamma - \dot{\beta} \sin \gamma) \sin \theta \sin \phi - \dot{\alpha} \cos \beta \cos \theta=0,$}

\medskip

$\ (1+\rho)
(\dot{\theta} \cos \theta \cos \phi - \dot{\phi} \sin \theta \sin \phi)-
(\dot{\alpha} \sin \beta \sin \gamma + \dot{\beta} \cos \gamma) \sin \theta \cos \phi$\\
\rightline{$+(\dot{\alpha} \sin \beta \cos \gamma - \dot{\beta} \sin \gamma) \sin \theta \sin \phi - \dot{\alpha} \cos \beta \cos \theta=0,$}

\medskip

$\ (1+\rho)(-\dot{\theta} \sin \theta)
+(\dot{\alpha} \sin \beta \sin \gamma + \dot{\beta} \cos \gamma) \cos \theta + \dot{\alpha} \sin \beta \cos \theta=0,$

\medskip

$\ 
\sin\theta\cos\phi(\dot{\alpha} \sin \beta \sin \gamma + \dot{\beta} \cos \gamma)+ 
\sin\theta \sin\phi(\dot{\alpha} \sin \beta \cos \gamma - \dot{\beta} \sin \gamma)$\\
\rightline{$
+\cos\theta(\dot{\alpha} \cos \beta + \dot{\gamma})
=0.$}
\begin{remark}
The  first three equations are linearly dependent, defining a rank 3 distribution on $\Qr$. The relation is $\v\cdot\left[(1+\rho)\dot\v-\boldsymbol{\omega}\times \v\right]=0.$  Thus one can omit say the  first equation, away from the subset of $\Qr$ where $v_1=0$.  
\end{remark}

\end{document}